\let\emptyset\varnothing
\newtheorem{Thm}{Theorem}[section]
\theoremstyle{definition}
\newtheorem{Def}[Thm]{Definition}
\newtheorem{Prop}[Thm]{Proposition}
\newtheorem{Lemma}[Thm]{Lemma}
\newtheorem{remark}[Thm]{Remark}
\providecommand{\keywords}[1]{\textbf{\textbf{Keywords}} #1}
\newcommand{\prob}{\mathbb{P}} 
\newcommand{\E}{\mathbb{E}} 
\newcommand{\N}{\mathbb{N}} 
\newcommand{\Z}{\mathbb{Z}} 
\newcommand{\R}{\mathbb{R}} 
\newcommand{\T}{\mathbb{T}} 
\newcommand{\St}{\mathbb{S}} 
\newcommand{\M}{\mathbb{M}} 
\newcommand{\1}{\mathbbm{1}} 
\newcommand\dist[2]{\lambda_{#2}(#1)} 
\newcommand{\statdist}{\pi} 
\newcommand{\reacdist}{\mu^{AB}} 
\newcommand{\reacdistnorm}{\hat{\mu}^{AB}} 
\newcommand{\trans}{P} 
\newcommand{\back}[1]{#1^-} 
\newcommand{\transback}{\back{\trans}} 
\newcommand{\Mod}[1]{\ (\mathrm{mod}\ #1)} 
\newcommand{\current}{f^{AB}} 
\newcommand{\effcurrent}{f^+} 
\newcommand{\rate}{k^{AB}} 
\newcommand{\rateA}{k^{A\rightarrow}} 
\newcommand{\rateB}{k^{\rightarrow B}} 
\newcommand{\meantime}{t^{AB}} 
\newcommand{\avmeantime}{\bar{t}_N^{AB}} 
\newcommand\restr[2]{\ensuremath{\left.#1\right|_{#2}}}
\newcommand{\PC}{D}
\title{Extending Transition Path Theory:\\  Periodically-Driven and Finite-Time  Dynamics}
 \author[a,b,c]{Luzie Helfmann}
 \author[a,b]{Enric Ribera Borrell}
 \author[a,b]{Christof Sch{\"u}tte}
 \author[a]{P{\'e}ter Koltai}
 \affil[a]{Institute of Mathematics, Freie Universit\" at Berlin, Berlin,  Germany}
 \affil[b]{Zuse-Institute Berlin, Berlin, Germany}
 \affil[c]{Department of Complexity Science, Potsdam Institute for Climate Impact Research, Potsdam, Germany}
\date{September 2020}
\begin{document}

\maketitle
\begin{abstract}
Given two distinct subsets $A,B$ in the state space of some dynamical system, 
Transition Path Theory (TPT) was successfully used to describe the statistical behavior of transitions from $A$ to $B$ in the ergodic limit of the stationary system. We derive generalizations of TPT that remove the requirements of stationarity and of the ergodic limit, and provide this powerful tool for the analysis of other dynamical scenarios: periodically forced dynamics and time-dependent finite-time systems. This is partially motivated by studying applications such as climate, ocean, and social dynamics. 
On simple model examples we show how the new tools are able to deliver quantitative understanding about the statistical behavior of such systems. We also point out explicit cases where the more general dynamical regimes show different behaviors to their stationary counterparts, linking these tools directly to bifurcations in non-deterministic systems.
\end{abstract}

\keywords{Transition Path Theory, Markov chains, time-inhomogeneous process, periodic driving, finite-time dynamics}

\section{Introduction}

The understanding of when and how dynamical transitions, such as tipping processes, happen 
is important for many systems from physics, 
biology~\cite{noe2009constructing}, ecology~\cite{ scheffer2001catastrophic,hastings2018transient},  the climate~\cite{lenton2008tipping,lenton2013environmental} and the social sciences~\cite{nyborg2016social,otto2020social}. 
 
 If the system dynamics can be modelled by a stationary Markov process running for infinite time, Transition Path Theory (TPT) provides a rigorous  approach for studying the transitions from one subset $A$ to another subset $B$ of the state space. 
The main tool of Transition Path Theory~\cite{weinan2006towards, metzner2009transition} are the forward and backward committor probabilities   telling us the probability of the Markov process   to next commit  to (i.e., hit)    $B$  relative to   $A$, either forward or backward in time.
Given these committor probabilities, one can derive important statistics of the ensemble of reactive trajectories (i.e., of  the collection of all possible  paths of the Markov process that start in $A$ and end in $B$), such as
\begin{compactitem}
\item the \emph{density} of reactive trajectories telling us about the bottlenecks during transitions,
\item the \emph{current} of reactive trajectories indicating the  most likely transition channels,
\item the \emph{rate} of reactive trajectories leaving $A$ or entering $B$, and 
\item the \emph{mean duration} of reactive trajectories.
\end{compactitem}

Other approaches that characterize the ensemble of transition paths, on the one hand, place the focus elsewhere: For instance, in Transition Path Sampling~\cite{BolhuisChandlerDellago2002} one is interested in directly sampling trajectories of the reactive ensemble. On the other hand, these approaches
consider different objects, such as the steepest descent path \cite{ulitsky1990new,czerminski1990self,olender1997yet},
the most probable path \cite{olender1996calculation,elber2000temperature, pinski2010transition, Faccioli2010dominant, Beccara2012dominant} (also in temporal networks \cite{ser2015most}), or the first passage path ensemble~\cite{von2018statistical} (see also Remark~\ref{rem:non-ergodic_processes}).

For many physical, especially molecular, systems that are equilibrated and where transitions happen on a smaller time scale than the observation window, the assumption of a stationary, infinite-time Markov process is reasonable and common practice~\cite{noe2009constructing}. 
For illustration we consider the overdamped Langevin dynamics  $$dX_t =  - \nabla V(X_t) dt + \sigma dW_t$$ in the triple well   landscape $V(x,y)$ (as in Figure~\ref{fig:potential_intro}), and we are interested in the transitions between the deep well $A$ and the other deep well~$B$. If the noise intensity $\sigma$ is sufficiently small, then the system tends to spend long times near local minima of deep wells (this behavior is called \emph{metastability}) and transitions predominantly happen across regions of possibly low values of the potential $V$ (e.g., saddle points). If the system from Figure~\ref{fig:potential_intro} is stationary and has infinite time for transitioning, the transition channel via the metastable well centered at $(0,1.5)$ is preferred since the barriers are lower and it does not matter that transitions take a very long time due to being stuck in the metastable set.

\begin{figure}[htb!]
\begin{subfigure}[c]{0.33\textwidth}
\includegraphics[width=1\textwidth]{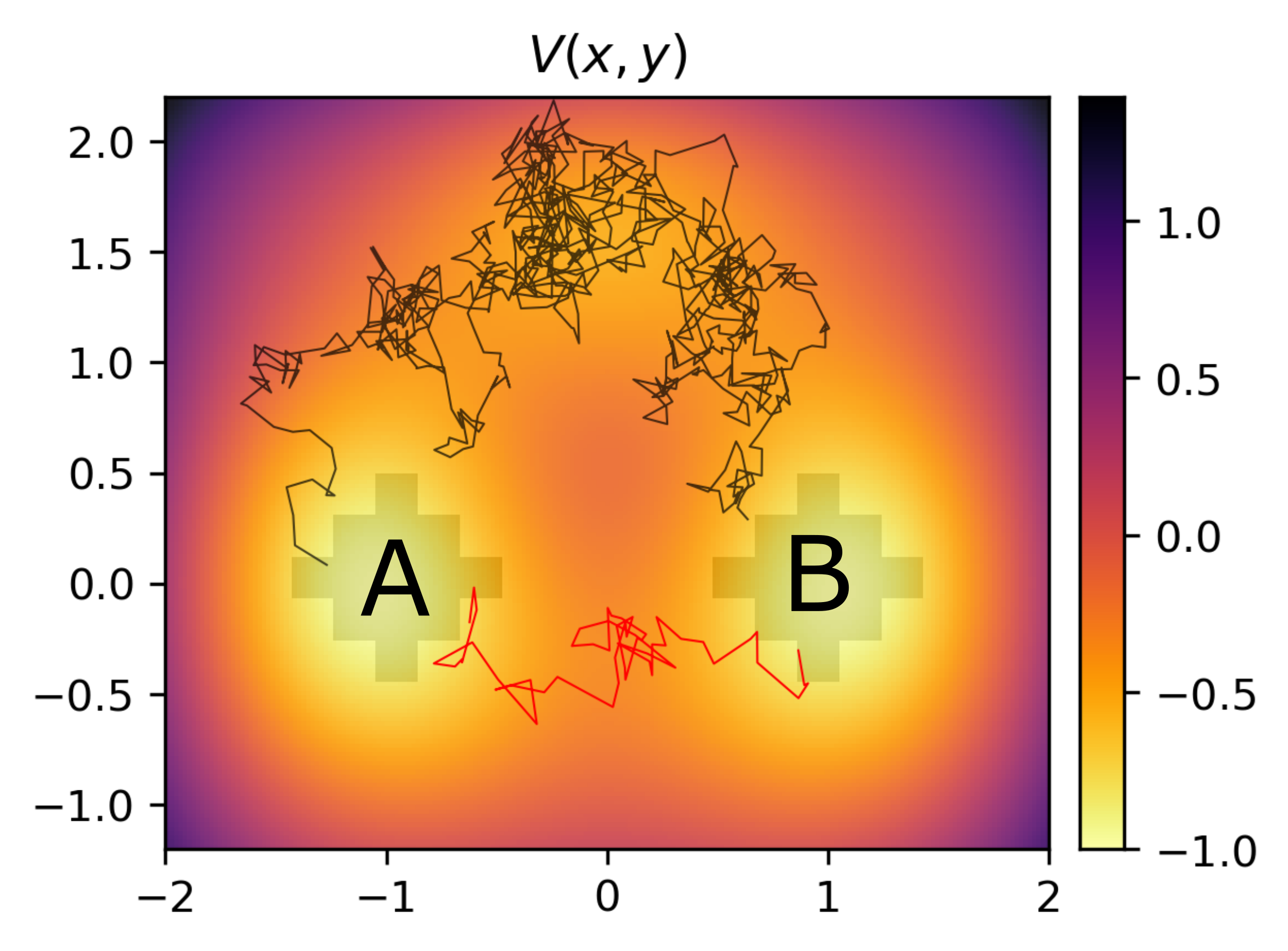}
\caption{}
\end{subfigure}
\begin{subfigure}[c]{0.33\textwidth}
\includegraphics[width=1\textwidth]{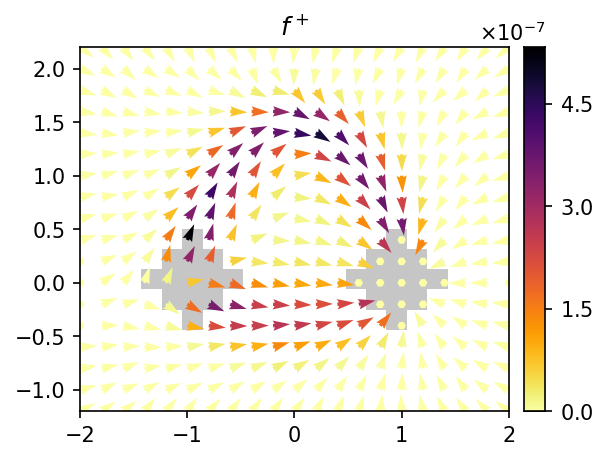}
\caption{}
\end{subfigure}
\begin{subfigure}[c]{0.33\textwidth}
\includegraphics[width=1\textwidth]{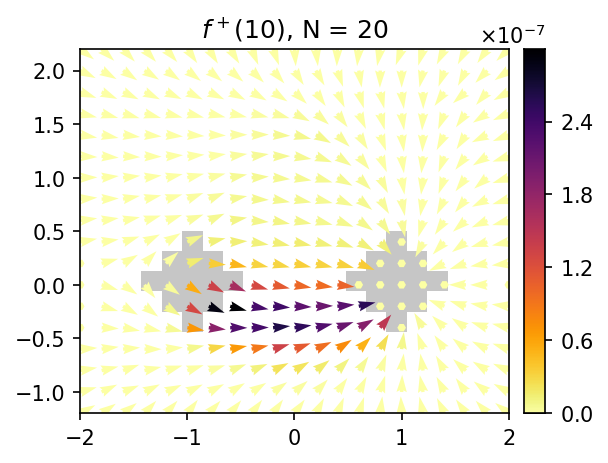}
\caption{}
\end{subfigure}
    \caption{\textbf{(a)} Triple well potential landscape with two possible reactive trajectories, i.e., transitions from $A$ to $B$, highlighted in black and red. \textbf{(b)} The effective current of reactive trajectories from $A$ to $B$  in infinite time; the channel via the well at $(0,1.5)$ is dominantly taken. \textbf{(c)} The effective current of reactive trajectories restricted to a small  time window; the transition channel via the direct barrier between $A$ and $B$ is more likely. More details on the numerical example can be found in Section~\ref{sec:triplewell_ex}, but here we chose a smaller noise intensity $\sigma = 0.26$. }
    \label{fig:potential_intro}
\end{figure}
 
However, in order to study transitions and tipping paths in different dynamical contexts (e.g., social systems or climate models) which are often characterized by  time-dependent (e.g., seasonal) dynamics as well as transitions of interest within a finite time window, the current theory of transition paths has to be extended.
   
In these applications one might, for instance, ask: What are the possible transition channels from the current state to a desirable and sustainable  state of our social or climate system within the next 30 years~\cite{steffen2018trajectories, otto2020social}?  By requiring the transitions to depart from $A$ and arrive in $B$ within a finite time interval, the affinity of the system taking the different transition channels is altered. This is visualized in  the triple well dynamics, Figure~\ref{fig:potential_intro}(c), where  now only the lower transition channel passing the high barrier is possible. Whenever the trajectory takes the channel through the upper metastable set (cf.\ black trajectory in Subfigure (a)), it is stuck there for a long time, and will not reach $B$ anymore within the finite time horizon.
   
Moreover, systems containing human agents are usually time-inhomogeneous and not equilibrated while climate systems are often affected by seasonal forcing, raising questions such as: What are the likely spreading paths of a contagion in a   time-evolving network~\cite{pan2011path,brockmann2013hidden,valdano2018epidemic}?  What are bottlenecks in the transient dynamics towards the equilibrium state~\cite{schonmann1992pattern,hollander2000metastability}?  How does   tipping  occur under the joint effect of noise and parameter changes~\cite{ashwin2012tipping,giorgini2019predicting} or   periodic forcing~\cite{herrmann2005exit}?  
   
In this paper we generalize TPT to a broader class of dynamical scenarios. In particular, we focus on two generalizations which we consider as natural but not exclusive building blocks for these more general cases: (a) periodically forced infinite-time system and (b) arbitrary time-inhomogeneous finite-time system.

We start in Section \ref{sec:basics_of_tpt} by formulating the general setting of TPT for Markov chains $(X_n)_{n\in\T}$ on a finite state space\footnote{Note that we chose Markov chains mostly for simplicity, it is possible to extend the theory to time-continuous and space-continuous dynamics. 
Also, using Ulams method (\cite{ulam1960collection}, see \cite{koltai2011efficient} for a summary)  any continuous Markov system can be discretized into a Markov chain model. }  by introducing time-dependent forward committor functions $q^+_i(n)$, giving the probability within the time horizon $\T$ to next commit to $B $ and not $A$ conditional on being in  $X_n=i$, as well as time-dependent backward committor functions $q^-_i(n)$, both of which are needed for computing the desired statistics of the transitions from $A$ to~$B$. 

We then in more detail work out the following main cases.
\begin{itemize}	
    \item[(i)] Under the assumption of stationary, infinite-time dynamics, it is known   \cite{weinan2006towards,metzner2009transition} that the committor functions are time-independent $q^+(n) = q^+$ by stationarity  and solve the following linear system  
    \begin{equation} 
\left\{ \begin{array}{rcll}
q_i^+ &=& \sum\limits_{j\in \St} \, \trans_{ij} \, q_j^+ &i \in (A\cup B)^c   \\
    q_i^+ &=& 0& i \in A  \\
    q_i^+ &=& 1& i \in B  \\
\end{array}\right.
\end{equation} where $P = (P_{ij})_{i,j\in\St}$ is the transition matrix. Similarly all the transition statistics are time-independent and by ergodicity the statistics can also be found by averaging along one infinitely long equilibrium trajectory~\cite{weinan2006towards,metzner2009transition}.   
In Section~\ref{sec:tpt_ergodic} we recall the theory~\cite{weinan2006towards,metzner2009transition} from a different point of view. Instead of defining all quantities in terms of trajectory-wise time-averages, we  prove  by using the Markov property that they can be written in terms of the committors and the stationary distribution. Further, we extend the theory by Lemma~\ref{Lm:q_fb_all_paths} decomposing the committors into path probabilities.
    \item[(ii)]  In Section~\ref{sec:tpt_periodic} we derive the committors and transition statistics for periodically varying dynamics with a period of length $M$ that are equilibrated (i.e., the law of the chain is also periodic). It follows that the committors are periodically varying $q^+(n) = q^+_m$ whenever $n=m$ modulo $M$, and we show that the committors solve the following linear system with periodic boundary conditions  in time $q^+_0 = q^+_M$
    \begin{equation} 
\left\{ \begin{array}{rcll}
q_{m,i}^+ &=& \sum\limits_{j\in \St} \, \trans_{m,ij} \, q_{m+1,j}^+ & i \in (A\cup B)^c \\
    q_{m,i}^+ &=& 0& i \in A  \\
    q_{m,i}^+ &=& 1& i \in B  \\
\end{array}\right.
\end{equation}
where $P_m = (P_{m,ij})_{i,j\in\St}$ is the transition matrix at time $n=m$ modulo $M$.
    This is consistent with the previous case when choosing a period of length $M=1$.  
    \item[(iii)] In Section~\ref{sec:tpt_finite} we derive the committor equations and transition statistics for general time-inhomogeneous Markov chains $(X_n)_{n\in\T}$ on a finite-time interval $\T=\{0,\dots,N-1\}$ defined by the transition probabilities $P(n) = (P_{ij}(n))_{i,j\in\St}$ and an initial density $\dist{0}{}$.
The forward committor $q^+(n)$ for a finite-time Markov chain  satisfies the following iterative system of equations:
\begin{equation}  
\left\{ \begin{array}{rcll}
q_i^+(n) &=&  
\sum\limits_{j \in \St} \, \trans_{ij}(n) \, q_j^+(n+1)    & i \in (A\cup B)^c  \\
q_i^+(n) &=& 0 & i \in A \\
q_i^+(n) &=& 1 & i \in B
\end{array}\right.
\end{equation}
with final condition $q^+_i(N-1) = \1_B(i)$. The transition statistics depend on the current time point in the time interval and can be related to an average over the ensemble of reactive trajectories.  We  also show consistency,
i.e., that given a stationary process on a finite time interval, the committors and statistics converge to their classical counterparts (i) in the infinite time limit.
\end{itemize}

We note that in the stationary regime, committor functions have been used for finding basins of attraction  of stochastic dynamics~\cite{Kol11,KoVo14,lindner2019stochastic},  as reaction coordinates~\cite{lu2014exact},  as basis functions in the core-set approach~ \cite{schutte2011markov, sarich2011projected,schutte2013metastability} and for studying modules and flows in networks~\cite{djurdjevac2011random,cameron2014flows}. We expect our results to enable similar uses for the time-dependent regime.  

The theoretical results from this paper are accompanied by   numerical studies  on two toy examples,   a network of $5$ nodes with time-varying transition probabilities, and a discrete model of the overdamped Langevin dynamics in a triple well potential with time-dependent forcing (as in Figure \ref{fig:potential_intro}). 
In these examples we will particularly show  to what extent time-dependent dynamics or finite-time restrictions affect the transition statistics in contrast to those in stationary, infinite time dynamics. The TPT-related objects derived here allow for a \emph{quantitative} assessment of the dominant statistical behavior in complicated dynamical regimes:

\begin{itemize}
    \item[(i)] By adding a periodic forcing to the stationary dynamics, the reaction channels are perturbed and new transition paths, that were not possible before, can appear  (see \hyperref[sub:network_periodic]{Example 2} and \hyperref[sub:triplewell_ex_periodic]{7}). 
    \item[(ii)] By restricting the stationary dynamics with matrix $P$ to a finite-time window (cf. \hyperref[sub:network_finite_stationary]{Example 3} and \hyperref[sub:triplewell_ex_finite_hom]{8}), only transitions within this window are allowed and the average rate of transitions is much lower  than in the infinite-time situation. By additionally applying a forcing  (cf. \hyperref[sub:network_finite_inhom]{Example 4}), the system is not equilibrated anymore and we can get a higher average rate of transitions than without forcing, although we set the time-dependent transition matrix $P(n)$ such that  its time-average equals the transition  matrix $P$ of the stationary case. A similar approach is used in rare events simulation where the system is pushed by an optimal non-equilibrium forcing under which the rare events become more likely~\cite{Hartmann2012optimal, Hartmann2014characterization}.
    \item[(iii)] The finite-time case can also be employed for studying qualitative changes in the transition dynamics when parameters are perturbed.  In \hyperref[sub:well_ex_bif]{Example 9}, we exemplarily show  that by increasing the finite-time interval length $N$, the transition dynamics change qualitatively, even though the dynamics  are stationary. Thus, TPT can be used as a quantitative tool describing \emph{bifurcations} in non-stationary non-deterministic systems.
 \end{itemize}
The code used for the examples is available on Github at  \url{www.github.com/LuzieH/pytpt}.

 
\section{Preliminaries and General Setup} \label{sec:basics_of_tpt}

The objective of Transition Path Theory (TPT) is to understand the mechanisms by which noise-induced transitions of a Markov chain $(X_n)_{n\in\T}$ from one subset of the state space $A \subset \St$ to another subset $B \subset \St$ take place\footnote{TPT can be generalized to consider transitions between $N$ subsets $A_1,A_2,\dots, A_N$ by looking at the transitions between $A=A_i$ and $B=\bigcup_{j\neq i}^{N} A_j $ for each $i=1,\dots,N$ (e.g., used in the core set approach \cite{schutte2011markov,sarich2011projected,schutte2013metastability}). }. $A$ and $B$ are chosen as two non-empty, disjoint subsets of the finite state space $\St$,  such that the transition region $C:= (A \cup B)^c$ is also non-empty. 
Since historically in TPT one thinks of $A$  as the reactant states of a system, $B$ as the product states and the transitions from $A$ to $B$  as  reaction events, we call  the pieces of a trajectory that  connect  $A$ to $B$ by the name   \emph{reactive trajectories}. Each reactive trajectory contains the successive states in $C$ visited during a transition  that starts in $A$ and ends in $B$, see Figure \ref{fig:ensemble_reactive}, and we are interested in  gathering dynamical information about  the ensemble of reactive trajectories. 
\begin{SCfigure}[1]
    \centering
    \includegraphics[width=0.35\textwidth]{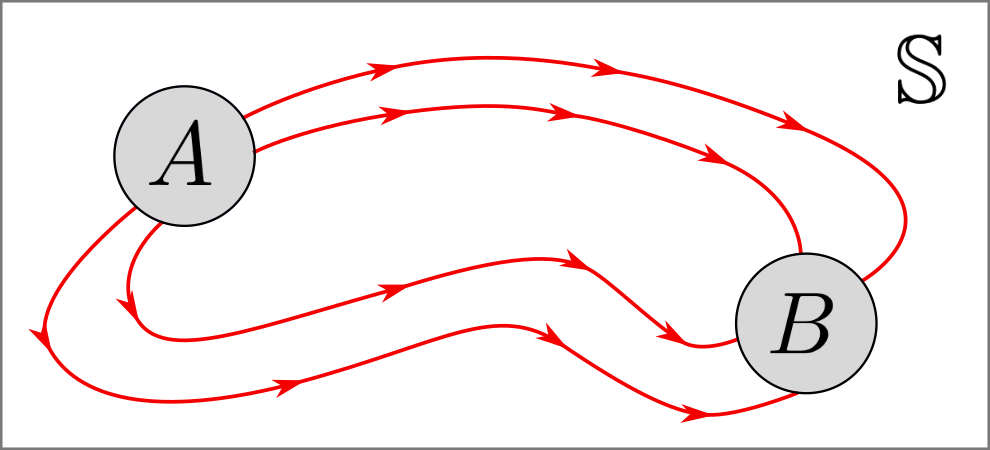}
    \caption{Reactive trajectories, i.e., excursions of trajectories that start in $A$ and end in $B$. }
    \label{fig:ensemble_reactive}
\end{SCfigure}

In this paper,  we are  interested not only in Markov chains living on the infinite time frame~$\T = \Z$ but also those on finite time intervals $\T=\{0,1,\dots, N-1\}$, that is where the transitions from $A$ to $B$ have to take place during a finite time frame~$\T$. Moreover, we also consider non-stationary dynamics. This either means
\begin{compactitem}
\item that the system is in the transient phase towards equilibrium but otherwise has a time-independent transition matrix, or
\item that the dynamical rules (the transition matrices) are varying in time, as well as
\item that we are dealing with time-varying sets $A$ and $B$ (see the comment in Section \ref{sec:com_periodic} for the case of periodic dynamics and Remark \ref{rem:space_time_set_finite} for finite-time dynamics).
\end{compactitem}

In this section,  we will define the committor functions  and show how they can be used to derive  important statistics of the ensemble of reactive trajectories, this entails, e.g., the frequency of transitions, the most important transition channels and where the process on the way to $B$ gets stuck or spends most of its time. Thereby we will keep everything general enough for time-dependent and finite-time dynamics and for the moment only need to assume the Markovianity of the chain $(X_n)_{n\in\T}$   and that the distribution $\prob(X_n=i)$ and time-dependent transition rule $P(n) = (\prob(X_{n+1}=j|X_n=i))_{i,j\in\St}$ is given for all $n\in \T$.

Later, the results from this section will be applied to the special cases of (i) infinite-time, stationary dynamics (Section \ref{sec:tpt_ergodic}), (ii) infinite-time, periodic dynamics (Section \ref{sec:tpt_periodic}) and (iii) finite-time, time-dependent systems (Section \ref{sec:tpt_finite}), where we will also prove the existence of committors and give the linear system of equations they are solving. These three are just some selected special cases, other interesting cases of systems are, e.g., stochastic regime-switching (see the Remark \ref{rem:switching}). 

\subsection{Committor Equations}
All of the transition statistics and characteristics can be computed from  the committor probabilities, therefore we will start by defining them. The \emph{forward committor} is the probability that the Markov chain, currently in some state $i$, will next\footnote{ where with next we also include the current time point, i.e., that the system is already in $B$} go to set $B$ and not to $A$. The \emph{backward committor} is the same for the time-reversed Markov chain, i.e., the probability that the time-reversed process will next hit $A$ and not $B$, or equivalently, the probability that the chain last came from $A$ and not $B$. 

More precisely, the forward committor $q^+(n)=(q^+_i(n))_{i\in\St}$   gives the probability that the process starting in $i\in\St$ at time $n\in\T$ reaches at next within $\T$ first $B$ and not $A$. We write
\begin{equation}  \label{eq:q_f_def} \begin{split} 
q_i^+(n) := \prob(\tau_B^+(n) < \tau_A^+(n) \,|\, X_n=i), 
\end{split} \end{equation}
where the  \emph{first entrance time} of a set $S\subset \St$ after or at time $n\in\T$ is given by
$$\tau_S^+(n) := \inf\{k\in\T \text{ s.t. } k \geq n,  X_k \in S \}, \ \inf \emptyset := \infty.$$  
The backward committor $q^-(n)=(q^-_i(n))_{i\in\St}$ 
gives the probability that the trajectory arriving  at time $n\in\T$ in state $i\in\St$ last within $\T$ came from $A$ not $B$,
\begin{equation} \label{eq:q_b_def}  \begin{split}
q_i^-(n) :=  \prob( \tau_A^-(n) >\tau_B^-(n) \,|\, X_n=i), 
\end{split} \end{equation}
where the  \emph{last exit time} of a set $S$ before time or at time $n$ is given by 
$$\tau_S^- (n) := \sup\{k \in \T \text{ s.t. }k \leq n, X_k \in S \},\ \sup \emptyset := - \infty.$$
Note that the first entrance and last exit times are stopping times with respect to the forward and time-reversed process \cite[Lemma 3.1.1]{RiberaBorrell2019}. The time-reversed process will be introduced in the following sections for   infinite-time and  finite-time Markov chains.

The forward committor, as it is defined, only considers trajectory pieces arriving at $B$ within the time horizon $\T$, similarly, the backward committor only considers excursions that left $A$ within~$\T$.

\subsection{Transition Statistics}
\label{sec:statistics}

We will now define statistical objects that characterize the ensemble of \emph{reactive trajectories} from subset $A $ to $B  $  and see that they can be computed using the  committor probabilities and the  Markovianity assumption.

The first two objects, the  distribution of reactive trajectories  $\reacdist(n)$ and its normalized version $\reacdistnorm(n)$, tell us where the reactive trajectories are most likely to be found, i.e., where the transitioning trajectories spend most of their time.
\begin{Def}\label{def_reacdist} The
\textit{distribution of reactive trajectories} $\reacdist(n) =(\reacdist_i(n))_{i\in \St}$ for $n\in\T$ gives  the joint probability that the Markov chain is in a state $i$ at time $n$ while transitioning from $A$ to $B$:
 
    $$\reacdist_i(n) := \prob(X_n=i, \tau_A^-(n) >\tau_B^-(n) , \tau_B^+(n) < \tau_A^+(n)).$$
\end{Def}
Note that $\reacdist_i(n)=0$ for $i \notin C$, i.e., we only get information about the density of transitions passing through $C$. Direct transitions from $A$ to $B$ are neglected, and in general assumed not to exist.
\begin{Thm}\label{thm_reacdist}  For a general Markov chain $(X_n)_{n\in\T}$ with committors $q^+(n)$, $q^-(n)$, the distribution of reactive trajectories can be expressed as
    \begin{equation}  \begin{split}
    \reacdist_i(n)& =q_i^-(n)  \, \prob(X_n=i) \, q_i^+ (n) . \nonumber
\end{split} \end{equation}
\end{Thm}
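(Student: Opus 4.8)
The plan is to unwind the definition of $\reacdist_i(n)$ using the Markov property to factorize the event into a ``past'' part and a ``future'' part, conditioned on the present state $X_n=i$. First I would write
\[
\reacdist_i(n) = \prob\bigl(\tau_A^-(n) > \tau_B^-(n),\ \tau_B^+(n) < \tau_A^+(n)\ \big|\ X_n=i\bigr)\,\prob(X_n=i),
\]
so that the task reduces to showing the conditional probability factorizes as $q_i^-(n)\, q_i^+(n)$. The key observation is that the event $\{\tau_A^-(n) > \tau_B^-(n)\}$ (the trajectory last came from $A$, not $B$) is measurable with respect to $\sigma(X_k : k \le n)$, i.e.\ it depends only on the past up to and including time $n$, while the event $\{\tau_B^+(n) < \tau_A^+(n)\}$ (the trajectory next hits $B$, not $A$) is measurable with respect to $\sigma(X_k : k \ge n)$, i.e.\ it depends only on the present and future. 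This is exactly the point where one must be a little careful about the inclusion of the endpoint $n$ in both $\tau^+$ and $\tau^-$: the present state $X_n=i$ is shared by both, but since we are conditioning on $X_n=i$, this causes no double-counting — the two events become conditionally independent given $X_n=i$.

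The main step is therefore to invoke the Markov property in the following form: for a Markov chain, the past $(X_k)_{k\le n}$ and the future $(X_k)_{k\ge n}$ are conditionally independent given the present $X_n$. Hence for any event $E_-\in\sigma(X_k:k\le n)$ and any event $E_+\in\sigma(X_k:k\ge n)$ we have $\prob(E_-\cap E_+\mid X_n=i) = \prob(E_-\mid X_n=i)\,\prob(E_+\mid X_n=i)$. Applying this with $E_- = \{\tau_A^-(n) > \tau_B^-(n)\}$ and $E_+ = \{\tau_B^+(n) < \tau_A^+(n)\}$ gives
\[
\prob\bigl(\tau_A^-(n) > \tau_B^-(n),\ \tau_B^+(n) < \tau_A^+(n)\ \big|\ X_n=i\bigr)
= q_i^-(n)\,q_i^+(n),
\]
using directly the definitions \eqref{eq:q_f_def} and \eqref{eq:q_b_def} of the forward and backward committors. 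Multiplying through by $\prob(X_n=i)$ yields the claim, and the observation that $\reacdist_i(n)=0$ for $i\notin C$ follows since for $i\in A$ we have $q_i^+(n)=0$ and for $i\in B$ we have $q_i^-(n)=0$ (as noted after the committor definitions, or directly from $\tau^+_A(n)=n=\tau^+_B(n)$ being impossible to separate appropriately).

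The main obstacle is making the ``past $\perp$ future given present'' statement rigorous in a way that covers all three regimes (infinite-time stationary, infinite-time periodic, and finite-time), since the time-reversed process — and hence the precise measurability of the backward event — is only constructed later in Sections~\ref{sec:tpt_ergodic}--\ref{sec:tpt_finite}. I would handle this by phrasing the argument purely in terms of the forward filtration: $\{\tau_A^-(n) > \tau_B^-(n)\}$ is a deterministic function of the finite (or, in the infinite-time case, one-sided-up-to-$n$) path segment $(X_k)_{k\le n}$, so the classical Markov property of $(X_n)_{n\in\T}$ suffices and no explicit reference to the reversed dynamics is needed. One subtlety worth a sentence in the proof: in the finite-time case $\T=\{0,\dots,N-1\}$, the ``future'' segment is $(X_k)_{n\le k\le N-1}$ and the ``past'' segment is $(X_k)_{0\le k\le n}$, and conditional independence given $X_n$ still holds; in the infinite-time cases both segments are infinite but the events in question are still tail-trivially well-defined as limits, so the factorization goes through unchanged.
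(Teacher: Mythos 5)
Your proposal is correct and follows essentially the same route as the paper's proof: condition on $\{X_n=i\}$, invoke the Markov property in the form that the past-measurable event $\{\tau_A^-(n)>\tau_B^-(n)\}$ and the future-measurable event $\{\tau_B^+(n)<\tau_A^+(n)\}$ are conditionally independent given the present, and identify the two factors with the committor definitions \eqref{eq:q_b_def} and \eqref{eq:q_f_def}. The paper compresses the measurability discussion into a citation of a generalized Markov property (\cite[Prop 2.1.10]{RiberaBorrell2019}), which is exactly the point you elaborate, so no substantive difference remains.
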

\begin{proof} We can compute
\begin{equation}  \begin{split}
    \reacdist_i(n)&=\prob(\tau_A^-(n) >\tau_B^-(n) , \tau_B^+(n) < \tau_A^+(n)\,|\, X_n=i) \,  \prob(X_n=i)
    =q_i^-(n)\, q_i^+ (n) \, \prob(X_n=i) \nonumber
\end{split} \end{equation}
by conditioning on $\{X_n=i\}$, and by using  independence of the two events $\{\tau_A^-(n) >\tau_B^-(n) \}$, $\{\tau_B^+(n) < \tau_A^+(n)\}$ given $\{X_n=i\}$, which follows from the Markov property.\footnote{\cite[Prop 2.1.10]{RiberaBorrell2019} provides us with a generalisation of the Markov property for Markov chains for events like $\{\tau_A^-(n) >\tau_B^-(n) \}$ resp. $\{\tau_B^+(n) < \tau_A^+(n)\}$, which belong to the $\sigma$-algebra that contains the present and the future resp. the  and past of the chain.}
\end{proof}
The distribution $\reacdist(n)$ is not normalized but can easily be normalized by dividing $\reacdist(n)$ by  the probability to be on a transition at time $n$, $$Z^{AB}(n) := \sum_{j \in C} \reacdist_j(n) =\prob(\tau_A^-(n) >\tau_B^-(n) , \tau_B^+(n) < \tau_A^+(n)),$$  to give a probability distribution on $\St$:
\begin{Def} Whenever  $\prob(\tau_A^-(n) >\tau_B^-(n) , \tau_B^+(n) < \tau_A^+(n))>0$ for $n\in\T$,  we can define the \textit{normalized distribution of reactive trajectories} at time $n\in\T$ by
$$\reacdistnorm_i(n) := \prob(X_n=i|\tau_A^-(n) >\tau_B^-(n) , \tau_B^+(n) < \tau_A^+(n))$$
giving the density of states in which trajectories transitioning from $A$ to $B$ spend their time.
\end{Def}
The next object tells us about the average number of jumps from $i$ to $j$ during one time step (i.e., the probability flux) while the trajectory is on its way from $A$ to $B$:
\begin{Def}\label{def_reaccurr} 
The \textit{current of reactive trajectories} $\current(n) = (\current_{ij}(n))_{i,j\in \St}$ at time $n\in\T$ gives the average flux of trajectories going through $i$ at time $n\in \T$ and $j$ at time $n+1\in\T$ consecutively while on their way from $A$ to $B$:
$$\current_{ij}(n):= \prob( X_n=i, X_{n+1}=j, \tau_A^-(n) >\tau_B^-(n) , \tau_B^+(n+1) < \tau_A^+(n+1))$$
\end{Def}
\begin{Thm}\label{thm_reaccurr}  The current of reactive trajectories for a Markov chain $(X_n)_{n\in\T}$ with transition probabilities $\trans(n)$ and committors $q^+(n)$, $q^-(n)$, is given by 
\begin{equation}  \begin{split}
    \current_{ij} (n) =   q_i^-(n) \, \prob(X_n=i) \, \trans_{ij}(n) \,  q^+_j(n+1) \nonumber
\end{split} \end{equation}
\end{Thm}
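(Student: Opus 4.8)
The plan is to imitate the proof of Theorem~\ref{thm_reacdist}, but now splitting the trajectory across the \emph{bond} between times $n$ and $n+1$ rather than at a single time instant. First I would condition on the pair $\{X_n=i,\,X_{n+1}=j\}$ and write
\begin{equation*}
\current_{ij}(n) = \prob\!\big(\tau_A^-(n) > \tau_B^-(n),\ \tau_B^+(n+1) < \tau_A^+(n+1)\ \big|\ X_n=i,\,X_{n+1}=j\big)\ \prob(X_n=i,\,X_{n+1}=j),
\end{equation*}
and then use the one-step factorization $\prob(X_n=i,\,X_{n+1}=j) = \prob(X_n=i)\,\trans_{ij}(n)$, which is just the definition of the time-dependent transition matrix together with the tower property.

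The core of the argument is to factorize the remaining conditional probability. Set $E^- := \{\tau_A^-(n) > \tau_B^-(n)\}$ and $E^+ := \{\tau_B^+(n+1) < \tau_A^+(n+1)\}$. The key observation is that $E^-$ is measurable with respect to the ``past up to time $n$'', i.e.\ $\sigma(X_k : k\le n)$, since the last exit times before $n$ only involve $X_0,\dots,X_n$; while $E^+$ is measurable with respect to the ``future from time $n+1$'', i.e.\ $\sigma(X_k : k\ge n+1)$, since the first entrance times at or after $n+1$ only involve $X_{n+1},X_{n+2},\dots$. By the Markov property, conditionally on $X_n=i$ the event $E^-$ is independent of $X_{n+1}$, so $\prob(E^- \mid X_n=i,\,X_{n+1}=j) = \prob(E^- \mid X_n=i) = q_i^-(n)$; dually, conditionally on $X_{n+1}=j$ the event $E^+$ is independent of $X_n$, so $\prob(E^+ \mid X_n=i,\,X_{n+1}=j) = \prob(E^+ \mid X_{n+1}=j) = q_j^+(n+1)$; and, again by the Markov property, $E^-$ and $E^+$ are conditionally independent given $\{X_n=i,\,X_{n+1}=j\}$, since the conditioning fixes the two ``present'' coordinates that separate past from future. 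Substituting the three factorizations back in gives exactly $\current_{ij}(n) = q_i^-(n)\,\prob(X_n=i)\,\trans_{ij}(n)\,q_j^+(n+1)$.

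The only delicate point — and the one I expect to be the real obstacle — is the same one flagged in the proof of Theorem~\ref{thm_reacdist}: $E^-$ and $E^+$ are not cylinder events but are determined by the hitting and last-exit times $\tau_A^\pm,\tau_B^\pm$, so the ``Markov property'' invoked above must be the version for the $\sigma$-algebras of the strict past up to $n$ and of the future from $n+1$ (both splitting times in the sense of \cite[Prop.~2.1.10]{RiberaBorrell2019}), which yields their conditional independence given any coordinate lying between them. Once that is in hand the rest is bookkeeping; note in particular that the identity holds verbatim for every $i,j\in\St$, and one recovers $\current_{ij}(n)=0$ unless $i,j$ are placed so that a transition through $C$ around time $n$ is possible, consistently with $\reacdist_i(n)=0$ for $i\notin C$.
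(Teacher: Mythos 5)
Your proposal is correct and follows essentially the same route as the paper's proof: the paper conditions sequentially (first on $\{X_n=i\}$, splitting off the past event $\{\tau_A^-(n)>\tau_B^-(n)\}$ by conditional independence, then on $\{X_{n+1}=j\}$ to extract $\trans_{ij}(n)$ and $q_j^+(n+1)$), whereas you condition on the pair $\{X_n=i,\,X_{n+1}=j\}$ at once and factor by past/future conditional independence — the same decomposition into $q_i^-(n)\,\prob(X_n=i)\,\trans_{ij}(n)\,q_j^+(n+1)$ resting on the same generalized Markov property for the non-cylinder events (the point the paper also handles via \cite[Prop 2.1.10]{RiberaBorrell2019}). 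No gap; your flagged "delicate point" is exactly the one the paper addresses.
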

\begin{proof} The reactive current can be computed as
\begin{equation}  \begin{split}
    \current_{ij} (n)
    &=\prob( X_{n+1}=j, \tau_A^-(n) >\tau_B^-(n) , \tau_B^+(n+1) < \tau_A^+(n+1)| X_n=i) \, \prob(X_n=i) \nonumber \\
    &=\prob( X_{n+1}=j,\tau_B^+(n+1) < \tau_A^+(n+1)| X_n=i) \, \prob(\tau_A^-(n) >\tau_B^-(n)| X_n=i) \, \prob(X_n=i)   \nonumber \\
    &=\prob( X_{n+1}=j| X_n=i) \, \prob(  \tau_B^+(n+1) < \tau_A^+(n+1)| X_{n+1}=j, X_n=i)\,  q_i^-(n)\, \prob(X_n=i)  \nonumber \\ 
        &=   q_i^-(n)\, \prob(X_n=i)\, \trans_{ij}(n) \, q^+_j (n+1),\nonumber
\end{split} \end{equation}
by first conditioning on $\{X_n=i\}$, then by independence of $\{X_{n+1}=j, \tau_B^+(n+1) < \tau_A^+(n+1)\}$ and $\{ \tau_A^-(n) >\tau_B^-(n)\}$ given $\{X_n=i\}$,  by conditioning on $\{X_{n+1}=j\}$, and last by the Markov property.
\end{proof}
Let us note that the reactive current also counts the transitions going directly from $i\in A$ to $j\in B$, these are not accounted for in the reactive distribution which only accounts for transitions passing the region  $(A\cup B)^c$.

In order to eliminate information about detours of reactive trajectories, we define:
\begin{Def}\label{def_eff} 
The \textit{effective current of reactive trajectories} $\effcurrent(n) = (\effcurrent_{ij}(n))_{i,j\in \St}$ at time $n\in\T$ gives the net amount of reactive current  going through $i$ at time $n\in \T$ and $j$ at time $n+1\in\T$ consecutively,
$$\effcurrent_{ij}(n):= \max\{\current_{ij}(n)-\current_{ji}(n),0\}.$$
\end{Def}
Ultimately, the effective current of reactive trajectories can be used to find the dominant transition channels in state space between $A$ and $B$ (see, e.g., \cite{metzner2009transition}).

The current of reactive trajectories only goes out of $A$, not into $A$, moreover the current of reactive trajectories only points into $B$, not out of $B$. Therefore, $A$ can be thought of as a source of reactive trajectories, whereas $B$ acts like their sink. This leads us to our next characteristic of reactive trajectories: by summing the current of reactive trajectories over $A$ we get the the  discrete rate of reactive trajectories flowing out of $A$, and by summing the current over $B$, we obtain the rate of inflow into $B$:
\begin{Def}
For $n, n+1 \in \T$, the \textit{discrete rate of transitions leaving $A$} at time $n$ is given by 
$$\rateA (n):= \prob(X_n\in A, \tau_B^+(n+1) < \tau_A^+(n+1)), $$ i.e., the probability of a reactive trajectory leaving $A$ at time $n$. When $n-1, n \in \T$, the \textit{discrete rate of transitions entering $B$} at time $n$ is given by 
$$\rateB (n):= \prob(X_n\in B,\tau_A^-(n-1) >\tau_B^-(n-1)),   $$
 i.e., the probability of a reactive  trajectory entering $B$ at time $n$. 
\end{Def}
\begin{Thm} For a Markov chain $(X_n)_{n\in\T}$ with current of reactive trajectories $\current(n)$, we find the discrete rates to be
\begin{equation}
\begin{split}
    \rateA(n) &= \sum_{i\in A, j\in\St} \current_{ij}(n) \\ 
    \rateB(n) &= \sum_{i\in \St, j\in B} \current_{ij}(n-1). \\ 
\end{split}
\end{equation}
\end{Thm}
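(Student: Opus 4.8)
The plan is to derive both identities directly from the definitions of the reactive current $\current(n)$ (Definition~\ref{def_reaccurr}) and of the discrete rates, using only the law of total probability together with one elementary observation: on $\{X_n\in A\}$ the backward event $\{\tau_A^-(n)>\tau_B^-(n)\}$ is automatically satisfied, and on $\{X_n\in B\}$ the forward event $\{\tau_B^+(n)<\tau_A^+(n)\}$ is automatically satisfied. No appeal to the representation formulas of Theorems~\ref{thm_reacdist}--\ref{thm_reaccurr} is needed, although one could alternatively route the argument through them, using $q^-_i(n)=1$ for $i\in A$ and $q^+_j(n)=1$ for $j\in B$.

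For the rate out of $A$: fix $n$ with $n,n+1\in\T$. Since the events $\{X_{n+1}=j\}$, $j\in\St$, partition the sample space, summing the defining expression of $\current_{ij}(n)$ over all $j\in\St$ removes the constraint on $X_{n+1}$, leaving $\prob(X_n=i,\ \tau_A^-(n)>\tau_B^-(n),\ \tau_B^+(n+1)<\tau_A^+(n+1))$. Next I sum over $i\in A$. Here is the key step: if $X_n\in A$ then, by definition of the last exit time, $\tau_A^-(n)=n$, while $A\cap B=\emptyset$ forces $X_n\notin B$, so $\tau_B^-(n)\le n-1<n$ (with $\sup\emptyset=-\infty$ if $B$ has not yet been visited). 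Hence $\{X_n\in A\}\subseteq\{\tau_A^-(n)>\tau_B^-(n)\}$, the backward constraint drops out, and
\[
\sum_{i\in A,\,j\in\St}\current_{ij}(n)=\prob\big(X_n\in A,\ \tau_B^+(n+1)<\tau_A^+(n+1)\big)=\rateA(n).
\]

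For the rate into $B$: fix $n$ with $n-1,n\in\T$ and argue in the time-reversed fashion. Summing $\current_{ij}(n-1)=\prob(X_{n-1}=i,\ X_n=j,\ \tau_A^-(n-1)>\tau_B^-(n-1),\ \tau_B^+(n)<\tau_A^+(n))$ over all $i\in\St$ removes the constraint on $X_{n-1}$; then on $\{X_n\in B\}$ the first entrance time satisfies $\tau_B^+(n)=n$, while $X_n\notin A$ gives $\tau_A^+(n)\ge n+1>n$, so $\{X_n\in B\}\subseteq\{\tau_B^+(n)<\tau_A^+(n)\}$ and that constraint drops out. Summing over $j\in B$ then gives
\[
\sum_{i\in\St,\,j\in B}\current_{ij}(n-1)=\prob\big(X_n\in B,\ \tau_A^-(n-1)>\tau_B^-(n-1)\big)=\rateB(n).
\]

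I do not expect a genuine obstacle here. The only care required is bookkeeping of the time indices — matching the $n$, $n+1$, $n-1$ appearing in $\current$, $\rateA$ and $\rateB$ — and verifying the two containment statements, which use simultaneously the disjointness $A\cap B=\emptyset$ and the precise definitions of the first-entrance and last-exit times; the interchanges of finite sums are harmless since $\St$ is finite.
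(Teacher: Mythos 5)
Your proof is correct and follows essentially the same route as the paper's: sum the defining probability of $\current_{ij}(n)$, use the law of total probability to remove the unconstrained coordinate, and note that the remaining backward (resp.\ forward) constraint is automatic on $\{X_n\in A\}$ (resp.\ $\{X_n\in B\}$). In fact you spell out the containment arguments $\{X_n\in A\}\subseteq\{\tau_A^-(n)>\tau_B^-(n)\}$ and $\{X_n\in B\}\subseteq\{\tau_B^+(n)<\tau_A^+(n)\}$, which the paper's one-line proof leaves implicit.
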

\begin{proof}
We can compute by using the law of total probability  
\begin{equation}
    \begin{split}
    \sum_{i\in A, j\in\St} \current_{ij}(n) &= \sum_{i\in A, j\in\St} \prob( X_n=i, X_{n+1}=j, \tau_A^-(n) >\tau_B^-(n) , \tau_B^+(n+1) < \tau_A^+(n+1)) 
    = \rateA(n) \\ 
    \sum_{i\in \St, j\in B} \current_{ij}(n-1) &=\sum_{i\in \St, j\in B} \prob( X_{n-1}=i, X_{n}=j,  \tau_A^-(n-1) >\tau_B^-(n-1) , \tau_B^+(n) < \tau_A^+(n))
    = \rateB(n).         
    \end{split}
\end{equation}
\end{proof}

\section{TPT for Stationary, Infinite-time Markov Chains} \label{sec:tpt_ergodic}
 TPT was originally designed for stationary, infinite-time Markov processes~\cite{weinan2006towards,metzner2009transition},   that are often used as models for  molecular systems~\cite{noe2009constructing}. Here we will recall that theory by using the results from the previous section and equip it with some new results (e.g. Lemma~\ref{Lm:q_fb_all_paths}) that will be needed later.
 
\subsection{Setting}
We begin with describing the processes of interest in this section.
\Ass{We consider a Markov chain $(X_n)_{n\in\Z}$ taking values in a  discrete  and finite state space $\St$, the time-discrete jumps between  states $i\in\St$ and $j\in \St$ occur with   probability $$\trans_{ij} := \prob(X_{n+1}=j|X_n=i)$$ stored in the row-stochastic transition matrix $\trans=(\trans_{ij})_{i,j \in \St}$.
    We assume that the process is irreducible, and ergodic with respect to the unique, strictly positive invariant distribution  $\statdist = (\statdist_i)_{i \in \St}$ (also called stationary distribution interchangeably) solving $\statdist^\top = \statdist^\top \trans $.} \label{ass:ergodic}
    
 The time-reversed process $(\back{X}_n)_{n \in \Z}$, $\back{X}_n := X_{-n}$ traverses the chain backwards in time. It is also a Markov chain \cite[Thm 2.1.19]{RiberaBorrell2019} and stationary with respect to the same invariant distribution. The  transition probabilities of the time-reversed process $\transback = (\transback_{ij})_{i,j \in \St}$ with entries $$\transback_{ij} := \prob(\back{X}_{-n+1} =j |\back{X}_{-n}=i) =  \prob(X_{n-1}=j |X_{n}=i)$$ can be found from expressing the flux in two ways, $$\prob(X_n=i, X_{n+1}=j)= \trans_{ij} \statdist_i = \transback_{ji} \statdist_j.$$
 
\subsection{Committor Probabilities}
Due to stationarity of the chain (Assumption \ref{ass:ergodic}), the law of the chain is the same for all times, and we simply have that the committors \eqref{eq:q_f_def} and \eqref{eq:q_b_def} are   time-independent $q_i^+(n)= q_i^+$, similarly $q_i^-(n)=q_i^-$ for all $n$.

The forward and backward committors can be found by solving a linear matrix equation of size $|C|$ with appropriate boundary conditions \cite[Chapter 1.3]{norris1998markov}. 
\begin{Thm} \label{Thm:q_equ} The forward committor for a   Markov chain according to Assumption \ref{ass:ergodic} with transition probabilities $\trans=(\trans_{ij})_{i,j\in\St}$ satisfies the following linear system 
\begin{equation}\label{eq:q_f}  
\left\{ \begin{array}{rcll}
q_i^+ &=& \sum\limits_{j\in \St} \, \trans_{ij} \, q_j^+ &i \in C   \\
    q_i^+ &=& 0& i \in A  \\
    q_i^+ &=& 1& i \in B.  \\
\end{array}\right.
\end{equation}
Analogously for the  backward committor, we have to solve the following linear system 
\begin{equation}\label{eq:q_b} 
\left\{ \begin{array}{rcll}
q_i^- &=& \sum\limits_{j\in \St} \, \transback_{ij} \, q_j^- &i \in C   \\
    q_i^- &=& 0& i \in B  \\
    q_i^- &=& 1& i \in A.  \\
\end{array}\right.
\end{equation}
\end{Thm}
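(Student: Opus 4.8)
The plan is to verify the three lines of \eqref{eq:q_f} directly from the definition \eqref{eq:q_f_def}, the analogous statement for \eqref{eq:q_b} following by applying the same reasoning to the time-reversed chain. By stationarity (Assumption~\ref{ass:ergodic}) the forward committor is time-independent, so it suffices to analyse $q_i^+ = \prob\big(\tau_B^+(0) < \tau_A^+(0) \mid X_0 = i\big)$; since $\statdist$ is strictly positive we have $\prob(X_0=i)=\statdist_i>0$, so every conditional probability below is well defined. The boundary conditions are immediate from disjointness of $A$ and $B$: if $i \in B$ then $X_0=i\notin A$, hence $\tau_A^+(0)\ge 1>0=\tau_B^+(0)$ and $q_i^+=1$; if $i\in A$ then $\{\tau_B^+(0)<\tau_A^+(0)\}$ is empty and $q_i^+=0$.

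For $i\in C$ I would carry out a first-step analysis. The point is that on $\{X_0=i\}$ with $i\in C=(A\cup B)^c$ the chain lies in neither target set at time $0$, so $\tau_A^+(0)=\tau_A^+(1)$ and $\tau_B^+(0)=\tau_B^+(1)$ on that event, and therefore $\{\tau_B^+(0)<\tau_A^+(0)\}$ and $\{\tau_B^+(1)<\tau_A^+(1)\}$ coincide there. Conditioning on $X_1$ and using the Markov property for the post-time-$1$ event $\{\tau_B^+(1)<\tau_A^+(1)\}$, together with time-independence of the committor, gives
\[
q_i^+ = \sum_{j\in\St}\prob(X_1=j\mid X_0=i)\,\prob\big(\tau_B^+(1)<\tau_A^+(1)\mid X_1=j\big) = \sum_{j\in\St}\trans_{ij}\,q_j^+,
\]
which is the first line of \eqref{eq:q_f}.

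For the backward committor \eqref{eq:q_b} I would repeat this on the time-reversed chain $(\back{X}_n)_{n\in\Z}$, which by the discussion preceding the theorem is again a stationary Markov chain, now with transition matrix $\transback$. Under the coupling $\back{X}_n=X_{-n}$ the last-exit time of $X$ satisfies $\tau_S^-(0)=-\sigma_S$ with $\sigma_S:=\inf\{k\ge 0:\back{X}_k\in S\}$, so $q_i^-=\prob(\sigma_A<\sigma_B\mid\back{X}_0=i)$ is exactly the forward committor of $\back{X}$ with the roles of $A$ and $B$ interchanged. The same first-step analysis, now for $\back{X}$, then yields $q_i^-=\sum_{j\in\St}\transback_{ij}\,q_j^-$ for $i\in C$, and the boundary values $q_i^-=1$ for $i\in A$, $q_i^-=0$ for $i\in B$.

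The step I expect to be most delicate is making the event identity $\{\tau_B^+(0)<\tau_A^+(0)\}=\{\tau_B^+(1)<\tau_A^+(1)\}$ on $\{X_0=i\in C\}$ and the Markov property for the hitting-time event $\{\tau_B^+(1)<\tau_A^+(1)\}$ fully rigorous, since the latter is not a cylinder set; this is where one invokes the generalised Markov property of \cite[Prop 2.1.10]{RiberaBorrell2019} and the fact that the event is measurable with respect to $\sigma(X_k:k\ge 1)$. Irreducibility on the finite state space guarantees $\tau_A^+,\tau_B^+<\infty$ almost surely and, because $A\cap B=\emptyset$, that exactly one of $\{\tau_B^+<\tau_A^+\}$ and $\{\tau_A^+<\tau_B^+\}$ occurs, so there are no degeneracy issues. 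I would close by noting that $C$ is finite and from every state of $C$ the chain enters $A\cup B$ with probability one, so the linear systems \eqref{eq:q_f} and \eqref{eq:q_b} each have a unique solution and thus characterise the committors \cite[Chapter 1.3]{norris1998markov}.
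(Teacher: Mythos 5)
Your proposal is correct and follows essentially the same route as the paper: the boundary values from the definitions of the first-entrance times, and for $i\in C$ a first-step analysis using the law of total probability, the identity $\tau_{A}^{+}(n),\tau_{B}^{+}(n)\ge n+1$ on $\{X_n=i\in C\}$, and the (generalised) Markov property, with the backward case handled via the time-reversed chain. Your added remarks on measurability of the hitting-time event and on uniqueness via irreducibility are consistent with what the paper defers to \cite[Prop 2.1.10]{RiberaBorrell2019} and Lemma~\ref{Lm:ex_uniq_ergodic}.
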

\begin{proof} From the definition of the committors \eqref{eq:q_f_def}, it immediately follows that we have $ q_i^+ =0 $ for $i \in A$ since we always have $\tau_A^+(n)=n$, while $\tau_B^+(n)>n$. Analogously we have $q_i^+ = 1$ for $i \in B$ since in that case $\tau_A^+(n)>n$ and $\tau_B^+(n)=n$. For the committor at node $i$ in the transition region $C$, we can sum the forward committor at all the other states $j$ weighted with the transition probability to transition from $i$ to $j$. This follows from 
\begin{equation}  \begin{split} 
q_i^+ &=\prob(\tau_B^+(n) < \tau_A^+(n) | X_n=i) = \sum_{j\in \St} \prob(X_{n+1}=j,  \tau_{B}^+(n) <\tau_{A}^+(n) |X_n=i) \nonumber \\ 
&= \sum_{j\in \St} \prob(X_{n+1}=j|X_n=i) \prob(\tau_{B}^+(n) <\tau_{A}^+(n) |X_{n+1}=j,X_n=i) \nonumber \\
&= \sum_{j\in \St} \prob(X_{n+1}=j|X_n=i) \prob(\tau_{B}^+(n+1) <\tau_{A}^+(n+1) |X_{n+1}=j,X_n=i) = \sum_{j\in \St}\trans_{ij} q_j^+ \nonumber
\end{split} \end{equation}
first using the law of total probability, then conditioning on $\{X_{n+1}=j\}$, using that at time $n$ the chain is in $i\in C$ and thus $\tau_A^+(n),\tau_B^+(n)\geq n+1$, and last using the Markov property. 

For the backward committor equations we can proceed in a similar way, by additionally using the time-reversed process.
\end{proof}
\begin{remark}
\label{rem:reversible}
If the Markov chain in addition is reversible, i.e., if  $\trans_{ij} \statdist_i = \trans_{ji} \statdist_j$ (equivalently, $\transback_{ij}=\trans_{ij}$) holds, then  it follows from Theorem \ref{Thm:q_equ}  that the forward and backward committor are related by $q_i^+=1-q_i^-$.
\end{remark}
The following lemma provides us with the necessary condition such that existence and uniqueness of the committors is guaranteed. For a proof see \cite[Lemma 3.2.4]{RiberaBorrell2019} or \cite[Chapter 4.2]{norris1998markov}.
\begin{Lemma} \label{Lm:ex_uniq_ergodic}
If $\trans$ is irreducible, then the two problems \eqref{eq:q_f} and \eqref{eq:q_b} each have a unique solution.
\end{Lemma}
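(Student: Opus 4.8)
The plan is to prove existence and uniqueness for the forward committor system \eqref{eq:q_f}; the backward committor system \eqref{eq:q_b} follows by the same argument applied to the time-reversed chain with transition matrix $\transback$, which is also irreducible (it has the same communicating structure as $\trans$, since $\transback_{ij} > 0 \iff \trans_{ji} > 0$). So fix attention on \eqref{eq:q_f}. Since the values of $q^+$ on $A$ and $B$ are prescribed, the genuine unknowns are $(q_i^+)_{i \in C}$, and the first line of \eqref{eq:q_f} can be rewritten as a linear system in these $|C|$ unknowns. Splitting the sum $\sum_{j \in \St} \trans_{ij} q_j^+$ into contributions from $j \in C$, $j \in A$, $j \in B$, and using $q_j^+ = 0$ on $A$, $q_j^+ = 1$ on $B$, we obtain for $i \in C$
\begin{equation*}
q_i^+ - \sum_{j \in C} \trans_{ij}\, q_j^+ = \sum_{j \in B} \trans_{ij},
\end{equation*}
i.e. $(\mathrm{Id} - \trans_C)\, q^+_C = b$, where $\trans_C = (\trans_{ij})_{i,j \in C}$ is the substochastic submatrix of $\trans$ restricted to $C$ and $b_i = \sum_{j \in B}\trans_{ij} \ge 0$. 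Thus it suffices to show $\mathrm{Id} - \trans_C$ is invertible.

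The key step is to show that the spectral radius of $\trans_C$ is strictly less than $1$. The matrix $\trans_C$ is a nonnegative matrix obtained by deleting the rows and columns indexed by $A \cup B$ from the stochastic, irreducible matrix $\trans$; each row sum of $\trans_C$ is $\le 1$, and at least one row sum is strictly $< 1$ because irreducibility forces some state in $C$ to have positive transition probability into $A \cup B$ (which is non-empty). I would then invoke the standard fact that for a nonnegative matrix $\trans_C$ dominated (entrywise) by an irreducible stochastic matrix in this "strictly sub-stochastic on a connected piece'' sense, one has $\rho(\trans_C) < 1$. Concretely: irreducibility of $\trans$ implies that from every $i \in C$ there is a path inside $\St$ reaching $A \cup B$, hence there is $m$ with $(\trans^m)_{i,A\cup B} > 0$ for all $i \in C$; this forces $\|(\trans_C)^m\|_\infty < 1$ (the row sums of $(\trans_C)^m$ are the probabilities of staying in $C$ for $m$ consecutive steps, which are $< 1$), and therefore $\rho(\trans_C) \le \|(\trans_C)^m\|_\infty^{1/m} < 1$. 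Consequently $\mathrm{Id} - \trans_C$ is invertible (its inverse is the convergent Neumann series $\sum_{k \ge 0} (\trans_C)^k$), so the system \eqref{eq:q_f} has exactly one solution, namely $q^+_C = (\mathrm{Id} - \trans_C)^{-1} b$ supplemented by the boundary values.

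The main obstacle — or rather the only genuinely non-bookkeeping point — is establishing $\rho(\trans_C) < 1$ cleanly from irreducibility alone, without assuming aperiodicity. The probabilistic reading above handles this: $((\trans_C)^m)_{ij}$ equals $\prob(X_m = j, X_1,\dots,X_{m-1} \in C \mid X_0 = i)$, and irreducibility guarantees that for $m$ large enough (e.g. $m = |\St|$) the probability of an $m$-step path from $i$ staying entirely in $C$ is strictly less than one for every $i \in C$, uniformly; this gives a strict contraction in a suitable norm after finitely many steps and hence $\rho(\trans_C)<1$. For the backward committor one repeats verbatim with $\transback$ in place of $\trans$ and the roles of $A$ and $B$ swapped in the boundary conditions, noting $\transback$ inherits irreducibility from $\trans$. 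I would keep this short, as the cited references \cite[Lemma 3.2.4]{RiberaBorrell2019} and \cite[Chapter 4.2]{norris1998markov} already contain the detailed argument; the purpose here is just to indicate why irreducibility is exactly the right hypothesis.
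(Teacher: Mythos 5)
Your proof is correct, but it is worth noting that the paper itself does not prove Lemma \ref{Lm:ex_uniq_ergodic} at all: it delegates the argument to the cited references \cite[Lemma 3.2.4]{RiberaBorrell2019} and \cite[Chapter 4.2]{norris1998markov}. The closest in-paper analogue is the appendix proof of the periodic counterpart, Lemma \ref{Lm:ex_uniq_periodic}, which follows the same skeleton as yours (restrict to $C$, write $(I-P_C)q^+_C=b$, show $\rho(P_C)<1$) but establishes the spectral bound differently: there the argument is a one-step $\ell_1$ contraction $\lVert v P_C\rVert_1<\lVert v\rVert_1$ based on the existence of a single row $i^*$ with deficient row sum. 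Your route through the $m$-step $\infty$-norm, using that irreducibility gives every state of $C$ a positive probability of hitting $A\cup B$ within $m=|\St|$ steps so that $\lVert (P_C)^m\rVert_\infty<1$ and hence $\rho(P_C)\le\lVert (P_C)^m\rVert_\infty^{1/m}<1$, is actually the more robust version: the one-row $\ell_1$ argument only yields a strict inequality when the test vector has nonzero mass at $i^*$, so it does not by itself exclude eigenvectors supported away from $i^*$, whereas your uniform $m$-step substochasticity uses irreducibility in full and closes exactly that loophole (and needs no aperiodicity). Your treatment of the backward system is also sound, since the strict positivity of $\pi$ guaranteed by Assumption \ref{ass:ergodic} makes $P^-$ irreducible whenever $P$ is, so the identical argument applies with the roles of $A$ and $B$ exchanged. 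In short: correct, self-contained, and slightly sharper than the argument the paper sketches for the periodic case.
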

There is a second characterization of the committors in the transition region using path probabilities, as summarized in the following lemma. The proof can be found in the Appendix \ref{app:proofs}.
\begin{Lemma} \label{Lm:q_fb_all_paths}
For any $i \in C$, the forward committor can also be specified as the probability of all possible paths starting from node $i$ that reach the set $B$ before $A$
\begin{equation} \label{eq:q_f_all_paths} \begin{split}
q_i^+ &= \sum_{\tau \in \Z^+} \sum_{\substack{i_1 \dots i_{\tau -1} \in C \\ i_\tau \in B}} \trans_{ii_1} \cdots \trans_{i_{\tau -1} i_{\tau}}.  
\end{split} \end{equation}
Similarly for any $i \in C$, the backward committor can also be understood as the sum of path probabilities of all possible paths arriving at node $i$ that  last came from $A$ and not $B$
\begin{equation} \label{eq:q_b_all_paths} \begin{split}
q_i^- &= \sum_{\tau \in \Z^-} \sum_{\substack{i_\tau \in A\\ i_{\tau +1} \dots i_{-1} \in C  }} \transback_{i_{\tau +1}i_{\tau}} \cdots \transback_{i i_{-1}}.
\end{split} \end{equation}
\end{Lemma}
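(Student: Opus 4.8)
The plan is to prove \eqref{eq:q_f_all_paths} directly from the definition \eqref{eq:q_f_def}, by writing the event $\{\tau_B^+(n) < \tau_A^+(n)\}$ as a countable disjoint union of cylinder events indexed by the finite excursions from $i$ that stay in $C$ and then land in $B$. Since by stationarity the committor does not depend on $n$, fix $i \in C$ and take $n = 0$. The crucial—and completely elementary—observation is that on the event $\{\tau_B^+(0) < \tau_A^+(0)\}$ one automatically has $\tau_B^+(0) < \infty$, for otherwise $\tau_B^+(0) < \tau_A^+(0)$ would be impossible; hence no recurrence argument is needed. Writing $\tau := \tau_B^+(0)$ and using that $X_0 = i \in C$, one checks that $\{\tau_B^+(0) < \tau_A^+(0)\}$ is exactly the disjoint union, over $\tau \in \Z^+$ and over state sequences $i_1, \dots, i_{\tau-1} \in C$ and $i_\tau \in B$, of the cylinder events $\{X_1 = i_1, \dots, X_\tau = i_\tau\}$: if the chain follows such a path then $X_0, \dots, X_{\tau-1} \notin B$ with $X_\tau \in B$ force $\tau_B^+(0) = \tau$, while $X_0, \dots, X_\tau \notin A$ force $\tau_A^+(0) \geq \tau+1$; conversely, on $\{\tau_B^+(0) < \tau_A^+(0)\}$ the states $X_1, \dots, X_{\tau_B^+(0)-1}$ lie in $C$ and $X_{\tau_B^+(0)} \in B$. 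Distinct triples $(\tau, i_1, \dots, i_\tau)$ give disjoint cylinders because $B \cap C = \emptyset$.

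Granting the decomposition, the rest is a short computation. Repeated conditioning together with the Markov property gives the telescoping identity
\[
\prob(X_1 = i_1, \dots, X_\tau = i_\tau \mid X_0 = i) = \trans_{i i_1}\, \trans_{i_1 i_2} \cdots \trans_{i_{\tau-1} i_\tau},
\]
and summing this over all admissible $\tau$ and all admissible state sequences—which is legitimate because the summands are nonnegative, so countable additivity of $\prob(\,\cdot \mid X_0 = i)$ and Tonelli apply, and the resulting series converges since its partial sums are bounded by $q_i^+ \le 1$—yields precisely the right-hand side of \eqref{eq:q_f_all_paths}.

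For the backward committor I would run the same argument on the time-reversed chain $(\back{X}_n)_{n\in\Z}$, which by Assumption~\ref{ass:ergodic} is again a stationary Markov chain, now with one-step probabilities $\transback$. By definition $q_i^-(0) = \prob(\tau_A^-(0) > \tau_B^-(0) \mid X_0 = i)$ is the probability that, run backward from $i$, the chain reaches $A$ strictly before $B$; exactly as above, on this event the last exit time $\tau := \tau_A^-(0)$ is finite (if it were $-\infty$ it could not exceed $\tau_B^-(0)$), and the event is the disjoint union over $\tau \in \Z^-$ and over $i_{\tau+1}, \dots, i_{-1} \in C$, $i_\tau \in A$ of the cylinder events $\{X_{-1} = i_{-1}, \dots, X_\tau = i_\tau\}$. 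Conditioning and using the Markov property together with $\transback_{jk} = \prob(X_{m-1} = k \mid X_m = j)$ gives $\prob(X_{-1} = i_{-1}, \dots, X_\tau = i_\tau \mid X_0 = i) = \transback_{i i_{-1}}\, \transback_{i_{-1} i_{-2}} \cdots \transback_{i_{\tau+1} i_\tau}$, whose product—after reordering the factors—is exactly the summand in \eqref{eq:q_b_all_paths}; summing as before completes the proof.

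The only step that needs genuine care is the set-theoretic identity between $\{\tau_B^+(0) < \tau_A^+(0)\}$ and the disjoint union of cylinder sets: one must verify both inclusions and observe that finiteness of $\tau_B^+(0)$ on this event is automatic, so that irreducibility/recurrence of $\trans$ is not invoked here (it enters only through Lemma~\ref{Lm:ex_uniq_ergodic}). Everything else—the telescoping via the Markov property and the interchange of summation and probability for nonnegative terms—is routine. As a cross-check one could instead note that the path-sum in \eqref{eq:q_f_all_paths}, extended by $0$ on $A$ and $1$ on $B$, satisfies the linear system \eqref{eq:q_f} and then appeal to uniqueness in Lemma~\ref{Lm:ex_uniq_ergodic}; I would nonetheless favor the direct decomposition, since it makes the probabilistic meaning transparent and avoids having to establish convergence of the series as a separate preliminary.
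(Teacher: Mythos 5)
Your argument is correct, and it reaches the path expansion by a slightly different route than the paper. The paper's proof first conditions on the value of the first hitting time of $A\cup B$, introducing the joint probabilities $J_i^+(n,\tau)=\prob(\tau_B^+(n)<\tau_A^+(n),\,\tau_{A\cup B}^+(n)=\tau\,|\,X_n=i)$, shows by a one-step analysis (re-using the argument behind the committor equations) that these satisfy a recursion with initial condition $J_i^+(\tau,\tau)=\1_B(i)$, and then unrolls that recursion into the product-of-transition-probabilities form; it also invokes ergodicity to discard the $\tau=\infty$ term. You instead write the event $\{\tau_B^+(0)<\tau_A^+(0)\}$ directly as a countable disjoint union of cylinder sets indexed by the excursion $(i_1,\dots,i_{\tau-1}\in C,\ i_\tau\in B)$, compute each cylinder probability by the Markov property, and sum by countable additivity; the backward formula is obtained by the same decomposition applied to the time-reversed chain with kernel $\transback$. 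The two proofs produce the same sum, but yours dispenses with the intermediate recursion, and your observation that finiteness of $\tau_B^+(0)$ is automatic on the event $\{\tau_B^+(0)<\tau_A^+(0)\}$ (so that no recurrence or ergodicity argument is needed at this point, irreducibility entering only via Lemma~\ref{Lm:ex_uniq_ergodic}) is a genuine, if small, simplification relative to the paper, which appeals to ergodicity to set the $\tau=\infty$ contribution to zero even though that term vanishes identically on the event in question. Your care about disjointness of the cylinders (via $B\cap C=\emptyset$, resp.\ $A\cap C=\emptyset$) and about the legitimacy of the interchange of summation and probability is exactly what the direct decomposition requires, so the proof is complete as proposed; the suggested cross-check via uniqueness of solutions to \eqref{eq:q_f} would also work but is not needed.
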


\subsection{Transition Statistics}
The committor, the distribution,  and the transition probabilities are time-independent, thus the statistics from  Section \ref{sec:statistics} are time-independent. We write the distribution of reactive trajectories (Theorem \ref{thm_reacdist}) as $\reacdist=(\reacdist_i)_{i\in\St}$, where
$$\reacdist_i =q_i^- q_i^+ \ \statdist_i, 
$$
the  normalized distribution as $\reacdistnorm=(\reacdistnorm_i)_{i\in\St}$.
The current of reactive trajectories (Theorem \ref{thm_reaccurr}) $\current=(\current_{ij})_{i,j\in\St}$ is given by 
$$\current_{ij}  = q_i^- \statdist_i \trans_{ij} q^+_j,
$$
and the effective reactive current is denoted $\effcurrent = (\effcurrent_{ij})_{i,j\in \St}$.
\begin{Thm}\label{Thm:conservationlaw} For a stationary Markov chain $(X_n)_{n\in \Z}$ the reactive current out of a node $i\in C$ equals the current flowing into the node $i\in C$, i.e.,
\begin{equation}
 \sum_{j\in\St} \current_{ij} = \sum_{j\in\St} \current_{ji}. 
\end{equation}
Further, the the reactive current flowing out of A into $\St$ (equivalently into $C\cup B$) equals the flow of reactive trajectories from $\St$ (equivalently from $C \cup A$) into $B$ 
\begin{equation}
\sum_{i\in A, j\in\St} \current_{ij} = \sum_{ i\in\St, j\in B} \current_{ij}.
\end{equation} 
\end{Thm}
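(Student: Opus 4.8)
The plan is to reduce everything to the algebraic identity for the stationary reactive current, $\current_{ij} = q_i^- \statdist_i \trans_{ij} q_j^+$, combined with the two committor systems of Theorem~\ref{Thm:q_equ} and the time-reversal relation $\trans_{ji}\statdist_j = \transback_{ij}\statdist_i$. No new probabilistic input is needed beyond what the excerpt already provides.

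For the nodewise conservation law, fix $i \in C$. Summing the outgoing current over $j$ and pulling the factor $q_i^- \statdist_i$ out gives $\sum_{j\in\St} \current_{ij} = q_i^-\statdist_i \sum_{j\in\St} \trans_{ij} q_j^+$, and since $i \in C$ the forward committor equation~\eqref{eq:q_f} collapses the inner sum to $q_i^+$, so $\sum_{j\in\St} \current_{ij} = q_i^- \statdist_i q_i^+ = \reacdist_i$. For the incoming current, $\sum_{j\in\St} \current_{ji} = q_i^+ \sum_{j\in\St} q_j^- \statdist_j \trans_{ji}$; rewriting $\statdist_j \trans_{ji} = \statdist_i \transback_{ij}$ and using the backward committor equation~\eqref{eq:q_b} for $i\in C$, the inner sum becomes $\statdist_i q_i^-$, hence $\sum_{j\in\St} \current_{ji} = q_i^+ \statdist_i q_i^- = \reacdist_i$ as well. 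This proves the first identity and, as a byproduct, identifies the common value as the reactive density $\reacdist_i$.

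For the global balance between $A$ and $B$, I would first record the two ``one-way'' facts that follow directly from the boundary conditions of the committors: since $q_j^+ = 0$ for $j\in A$, we have $\current_{ij}=0$ whenever $j\in A$ (no reactive current enters $A$); since $q_i^- = 0$ for $i\in B$, we have $\current_{ij}=0$ whenever $i\in B$ (no reactive current leaves $B$). Then I would sum the nodewise conservation law over all $i\in C$ and split both the outgoing and the incoming totals according to whether the other endpoint lies in $C$, $A$, or $B$. The $C\times C$ contributions cancel (they are the same double sum with indices relabeled), the terms with an endpoint in $A$ on the outgoing side and in $B$ on the incoming side vanish by the one-way facts, and what survives is $\sum_{i\in C,\,j\in B}\current_{ij} = \sum_{i\in A,\,j\in C}\current_{ij}$. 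Finally I would expand $\sum_{i\in A,\,j\in\St}\current_{ij}$ and $\sum_{i\in\St,\,j\in B}\current_{ij}$ using the same one-way facts (so the ranges reduce to $j\in C\cup B$ and $i\in C\cup A$ respectively), note that both contain the common direct-flux term $\sum_{i\in A,\,j\in B}\current_{ij}$, and conclude equality from the identity just derived.

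The only genuine hazard is the case bookkeeping in the last step: one must check that the sums over $\St$ in the statement really decompose as claimed (e.g.\ $\sum_{j\in\St}\current_{ij} = \sum_{j\in C}\current_{ij} + \sum_{j\in B}\current_{ij}$ for $i\in A$, precisely because the $j\in A$ part is zero) and that the direct $A\to B$ transitions are accounted for identically on both sides. Apart from that the argument is a pure rearrangement of finitely many terms, with the two committor equations and the time-reversal identity doing all the work.
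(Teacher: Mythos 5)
Your proposal is correct and follows essentially the same route as the paper: the nodewise identity is the same algebraic computation combining $\current_{ij}=q_i^-\statdist_i\trans_{ij}q_j^+$ with the two committor equations and $\statdist_j\trans_{ji}=\statdist_i\transback_{ij}$, and the global balance is the same bookkeeping based on the vanishing of current into $A$ and out of $B$ (the paper decomposes $\sum_{i,j\in\St}\current_{ij}$ by rows and by columns rather than summing the nodewise law over $C$, but the cancellations, including the direct $A\to B$ term, are identical). No gaps.
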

For the proof see the Appendix \ref{app:proofs}.
\remark{Due to these conservation laws, there is  a close relation of the committors and the effective current (when the chain is reversible) to the voltage and the electric current in an electric resistor network \cite{doyle1984random, norris1998markov} with a voltage applied between $A$ and $B$, see \cite{metzner2008transition, metzner2009transition} for work in this direction.
Also in \cite{metzner2008transition, metzner2009transition},  a decomposition algorithm of the effective current into the paths from $A$ to $B$ carrying a substantial portion of the transition rate is proposed, yielding the dominant transition channels between $A$ and $B$.
The effective current of reactive trajectories is loop-erased, therefore in \cite{banisch2015reactive} a decomposition of the current of reactive trajectories into cyclic structures and non-cyclic parts  is suggested.  
}\label{rem:network_flows}

Further, since
 $\smash{ \sum_{i\in A, j\in\St} \current_{ij} = \sum_{ i\in\St, j\in B} \current_{ij} }$ by Theorem \ref{Thm:conservationlaw},  the discrete rate of leaving $A$ equals the discrete rate of entering into $B$, thus we denote $\rate :=  \rateA = \rateB $. This tells 
  us the probability of a realized transition per time step, i.e., either the probability to leave $A$ and be on the way to $B$ next, or the probability to reach $B$ when coming from $A$. That $\rate$ indeed has the physical interpretation of a rate becomes clear from the characterization in Theorem~\ref{thm_traj_average} below.

\subsection{Interpretation of the Statistics as Time-averages along Trajectories} 
The statistics from the previous section give us dynamical information about the ensemble of reactive trajectories.
Due to the ergodicity, the Markov chain will visit all states infinitely many times and  the ensemble space average of a quantity  equals the time average this quantity along  a single infinitely long trajectory (Birkhoff's ergodic theorem).  Therefore the reactive trajectory statistics from the previous section can also be found by considering the reactive pieces along a single infinitely long trajectory and by averaging over them.
  \begin{Thm} \label{thm_traj_average} For a  Markov chain $(X_n)_{n\in\Z}$ satisfying Assumption \ref{ass:ergodic}, we have the following $\prob-$almost sure convergence results:
    \begin{equation}  \begin{split}
        \reacdist_i &= \lim_{N \to \infty} \frac{1}{2N+1} \sum_{n=-N}^N \1_{\{i\}} (X_n) \1_A \left(X_{\tau_{A\cup B}^-(n)}\right) \1_B \left(X_{\tau_{A\cup B}^+(n)}\right) \\
        \current_{ij} &=\lim_{N \to \infty} \frac{1}{2N+1} \sum_{n=-N}^N \1_{\{i\}} (X_n) \1_A \left(X_{\tau_{A\cup B}^-(n)}\right) \1_{\{j\}} (X_{n+1}) \1_B \left(X_{\tau_{A\cup B}^+(n+1)}\right)  \\
        \rate &= \lim_{N \to \infty} \frac{1}{2N+1} \sum_{n=-N}^N
         \1_{A} (X_n) \1_B\left(X_{\tau_{A\cup B}^+(n+1)}\right) = \lim_{N \to \infty} \frac{1}{2N+1} \sum_{n=-N}^N
         \1_A\left(X_{\tau_{A\cup B}^-(n-1)}\right) \1_{B} (X_n) 
    \end{split} \end{equation}
    where $i,j \in \St$ and $\1_A(x)$ is the indicator function on the set $A$.
    \end{Thm}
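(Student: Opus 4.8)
The plan is to recognize each of the three limits as a Birkhoff ergodic average for the shift on bi-infinite trajectory space, and then to identify the ergodic mean with the corresponding statistic from Section~\ref{sec:statistics}. Concretely, let $\Omega=\St^{\Z}$ carry the law $\prob$ of the stationary chain from Assumption~\ref{ass:ergodic}, and let $\theta\colon\Omega\to\Omega$ be the left shift, $(\theta\omega)_n=\omega_{n+1}$. Then $\theta$ is invertible and $\prob$-preserving by stationarity, and, since $\trans$ is irreducible on a finite state space, the system $(\Omega,\prob,\theta)$ is ergodic; this is the classical fact that a stationary irreducible finite-state chain generates an ergodic process (periodicity of $\trans$ is no obstruction to ergodicity of the shift), and I would quote it rather than reprove it.

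The second ingredient is the covariance of the first entrance and last exit times under $\theta$: from the definitions and $X_k(\theta^n\omega)=X_{k+n}(\omega)$ one gets, for any $S\subset\St$, $\tau_S^+(n)(\omega)=n+\tau_S^+(0)(\theta^n\omega)$ and $\tau_S^-(n)(\omega)=n+\tau_S^-(0)(\theta^n\omega)$, hence $X_{\tau_S^\pm(n)}(\omega)=X_{\tau_S^\pm(0)}(\theta^n\omega)$ whenever the time is finite. Consequently, setting
\[
g_i(\omega):=\1_{\{i\}}(X_0)\,\1_A\!\big(X_{\tau_{A\cup B}^-(0)}\big)\,\1_B\!\big(X_{\tau_{A\cup B}^+(0)}\big),
\]
the $n$-th summand in the first displayed formula of the theorem equals $g_i\circ\theta^n$; the same is true for the current with $g_{ij}(\omega):=\1_{\{i\}}(X_0)\1_A(X_{\tau_{A\cup B}^-(0)})\1_{\{j\}}(X_1)\1_B(X_{\tau_{A\cup B}^+(1)})$, and for the two rate expressions with $g^A(\omega):=\1_A(X_0)\1_B(X_{\tau_{A\cup B}^+(1)})$ and $g^B(\omega):=\1_A(X_{\tau_{A\cup B}^-(-1)})\1_B(X_0)$. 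All these functions are bounded, hence integrable, so the two-sided Birkhoff ergodic theorem (applied to $\theta$ and $\theta^{-1}$, splitting $\tfrac1{2N+1}\sum_{-N}^{N}$ into a forward and a backward average) gives $\prob$-a.s.\ convergence of each symmetric average to the respective expectation $\E[g]$.

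It then remains to compute these expectations. By recurrence of the irreducible finite-state chain, $\tau_{A\cup B}^+(n)<\infty$ and $\tau_{A\cup B}^-(n)>-\infty$ $\prob$-a.s., so $X_{\tau_{A\cup B}^\pm(n)}\in A\cup B$ a.s.; since $\tau_{A\cup B}^+(0)=\min\{\tau_A^+(0),\tau_B^+(0)\}$ and $A\cap B=\emptyset$, on this event $\{X_{\tau_{A\cup B}^+(0)}\in B\}$ coincides with $\{\tau_B^+(0)<\tau_A^+(0)\}$, and symmetrically $\{X_{\tau_{A\cup B}^-(0)}\in A\}=\{\tau_A^-(0)>\tau_B^-(0)\}$. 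Hence
\[
\E[g_i]=\prob\big(X_0=i,\ \tau_A^-(0)>\tau_B^-(0),\ \tau_B^+(0)<\tau_A^+(0)\big)=\reacdist_i
\]
by Definition~\ref{def_reacdist} and stationarity; the same matching yields $\E[g_{ij}]=\current_{ij}$ from Definition~\ref{def_reaccurr}, and $\E[g^A]=\rateA=\rate$, $\E[g^B]=\rateB=\rate$, where the equality $\rateA=\rateB$ is Theorem~\ref{Thm:conservationlaw}. For $i\notin C$ both sides vanish (if $X_n=i\in A$ then $X_{\tau_{A\cup B}^+(n)}=i\notin B$, and symmetrically for $i\in B$), consistent with $\reacdist_i=0$.

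The ergodic theorem itself is a black box, so the substantive steps are the last two: verifying a.s.\ finiteness of the entrance/exit times (recurrence) and carefully reconciling the ``$X_{\tau_{A\cup B}^\pm}\in A$ or $B$'' formulation of the summands with the ``$\tau_B^\pm<\tau_A^\pm$'' formulation in the definitions, where disjointness of $A$ and $B$ is exactly what excludes ties. A secondary point worth stating explicitly is ergodicity of the shift $\theta$ (not merely stationarity of the chain), since without it Birkhoff only delivers convergence to a conditional expectation rather than to the space average.
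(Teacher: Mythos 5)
Your proof is correct and follows essentially the same route the paper indicates for Theorem~\ref{thm_traj_average}: the canonical representation of the chain as an ergodic Markov shift on $\St^{\Z}$, the two-sided Birkhoff ergodic theorem applied to the shift-covariant observables, and the identification of the resulting expectations with $\reacdist_i$, $\current_{ij}$ and $\rate$ via recurrence and the disjointness of $A$ and $B$. The only points worth keeping explicit in a write-up are exactly the ones you flag yourself: ergodicity (not just stationarity) of the shift, and the a.s.\ finiteness of $\tau_{A\cup B}^{\pm}$ that makes the indicator reformulation legitimate.
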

    The proof of Theorem \ref{thm_traj_average} can be found in \cite[Thm 3.3.2, Thm 3.3.7, Thm 3.3.11]{RiberaBorrell2019} and relies on   Birkhoff's ergodic theorem \cite{walters2000introduction} for the canonical representation of the process as a Markov shift.
    
    The theorem not only offers a data-driven  approach to approximate the transition statistics by averaging along a given sufficiently long trajectory sample but also gives interpretability to the statistics. While $\reacdist$ as the (not normalized) trajectory-wise distribution of reactive trajectories and $ \current$ as the trajectory-wise flux of reactive trajectories are still straightforward to understand, we can also give meaning to the rate  $\rate$ and to $Z^{AB}=\sum_{i \in C} \reacdist_i$. We can think of $\rate$ as the total number of reactive transitions taking place within the time interval $\{-N,\dots,N\}$ divided by the number of time steps $2N+1$ in the limit of $N\to \infty$.
Similarly we can give meaning to $Z^{AB}$ as the total time spent transitioning during $\{-N,\dots,N\}$ divided by $2N+1$ in the limit of $N\to \infty$. 
Last, we note that the ratio between $ Z^{AB}$ and $\rate$  provides us with a further characteristic transition quantity~\cite{vanden2006transition},
\begin{equation*} \label{eq:mean_reactive_time}
\meantime := \frac{Z^{AB}}{k^{AB}} = \lim_{N \to \infty} \frac{ \text{total time spent transitioning during }\{-N,\dots,N\}}{ \#\{\text{reactive transitions  during }\{-N,\dots,N\}\} },
\end{equation*}
telling us the total time spent transitioning divided by the total number of transitions, i.e., the \emph{expected length of a transition} from $A$ to $B$.

\subsection{Numerical Example 1: Infinite-time, Stationary Dynamics on a 5-state Network}
\label{sub:network_ergodic}

We consider a Markov chain on a   network of five states $\St = \{0,1,2,3,4\}$ of which  $0,2,4$ have a high probability to remain in the same state. The transition matrix is given by the row-stochastic matrix $\trans$, see Figure \ref{fig:example_inf}. We are interested in transitions from the subset $A = \{0\}$ to the subset $B=\{4\}$. What is the most likely route that the transitions take?

The numerically computed committors and transition statistics are shown in Figure \ref{fig:example_inf}, the discrete rate of transitions is $\rate = 0.018$, i.e., on average every 56 time steps a transition from $A$ to $B$ is completed. We see that the effective current is the strongest along the path $A \rightarrow 1 \rightarrow B$, i.e., most transitions effectively happen via this path, but a share of $33,3\%$ of  transitions happen via the route from $A \rightarrow 3 \rightarrow B$.  Also, we can note that the density of reactive trajectories has a very high value in the state $2$, indicating that many reactive trajectories pass this state or stay there for a long time. Thus reactive trajectories on the effective path  $A \rightarrow 1 \rightarrow B$ will  likely do a detour to $2$, which is not visible from the effective current since it does not tell us about detours.  \\ 

\begin{figure}
\begin{subfigure}[t]{0.20\textwidth}
\centering
\includegraphics[width=1\textwidth]{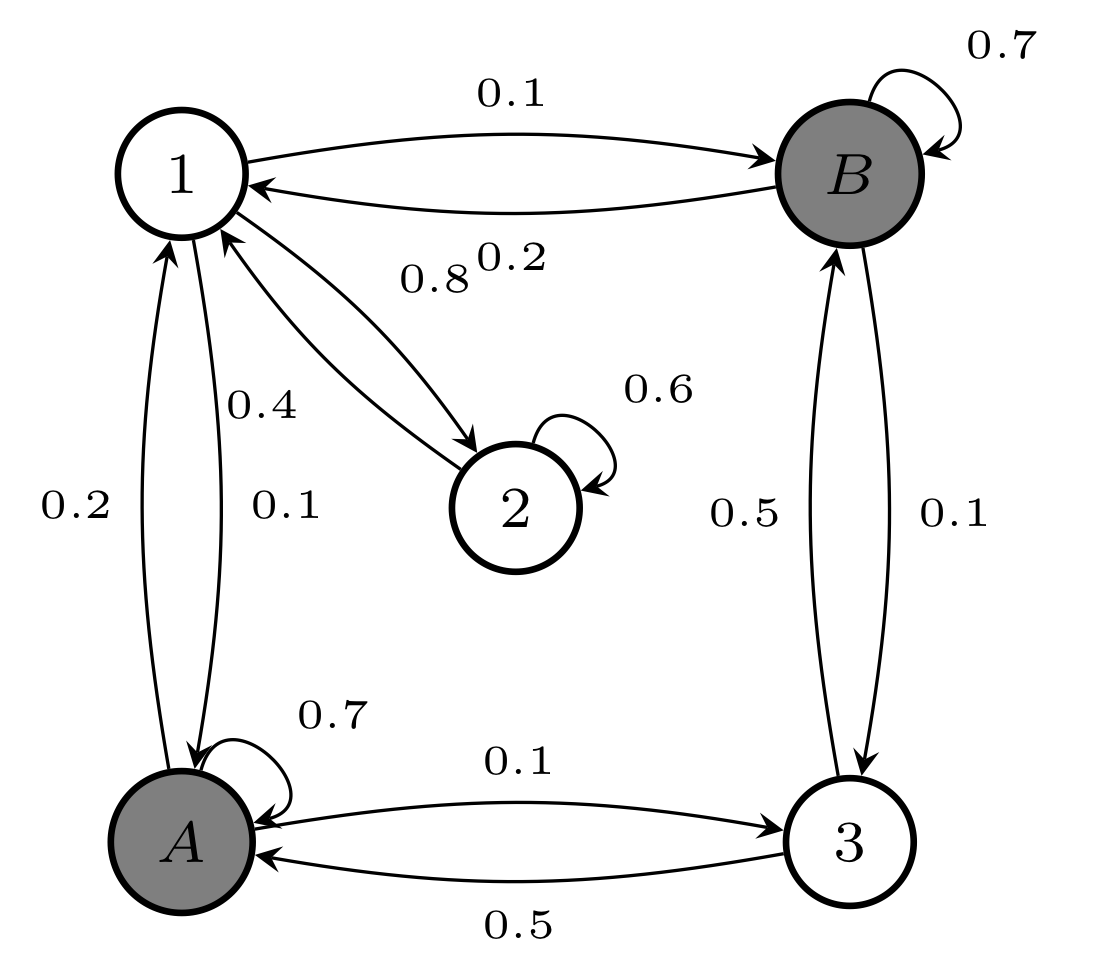}
\caption{}
\end{subfigure}
\begin{subfigure}[t]{0.18\textwidth}
\centering
\includegraphics[width=1\textwidth]{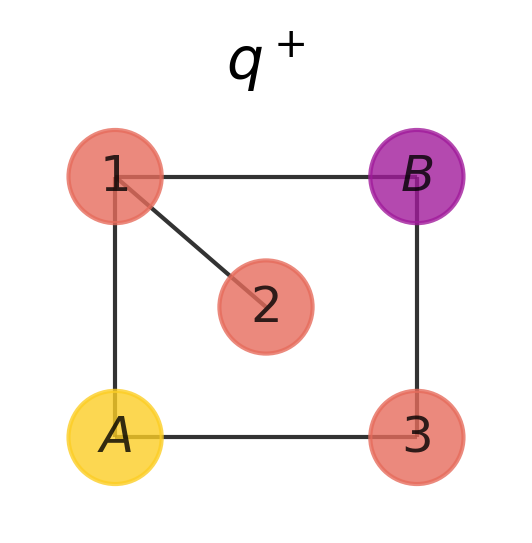}
\caption{}
\end{subfigure}
\begin{subfigure}[t]{0.18\textwidth}
\centering
\includegraphics[width=1\textwidth]{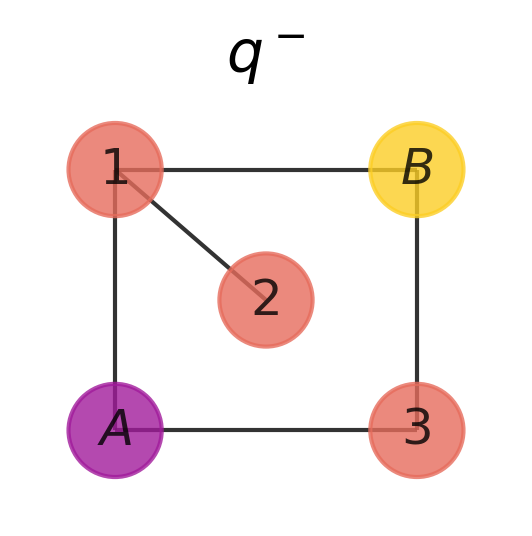}
\caption{}
\end{subfigure}
\begin{subfigure}[t]{0.18\textwidth}
\centering
\includegraphics[width=1\textwidth]{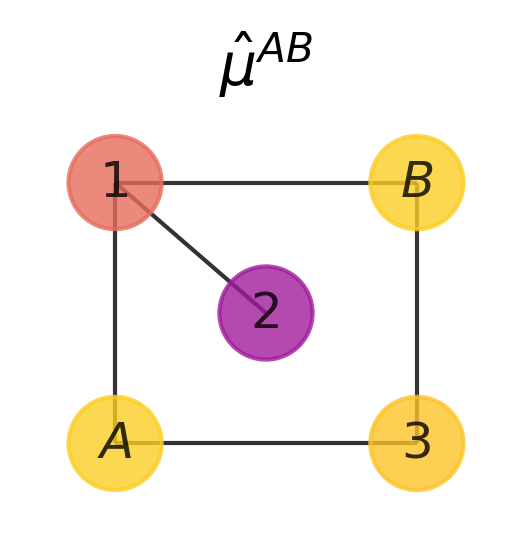}
\caption{}
\end{subfigure}
\begin{subfigure}[t]{0.18\textwidth}
\centering
\includegraphics[width=1\textwidth]{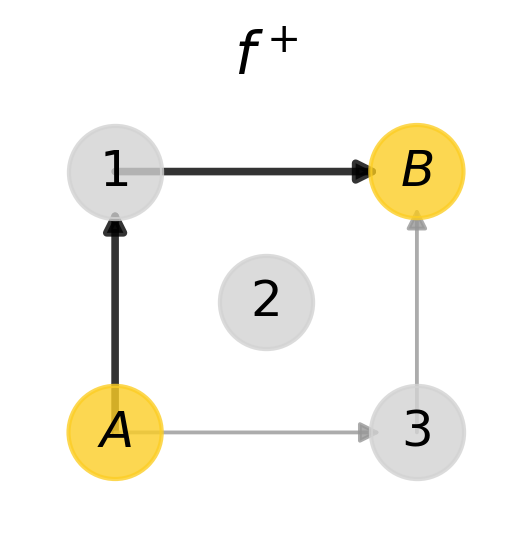}
\caption{}
\end{subfigure}
\begin{subfigure}[t]{0.03\textwidth}
\centering
\includegraphics[width=1\textwidth]{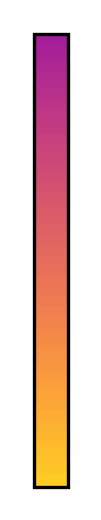}
 
\end{subfigure}
\caption{The dynamics of the infinite-time system in \hyperref[sub:network_ergodic]{Example 1} with \textbf{(a)} transition matrix $\trans$ and statistics of the transitions from $A$ to $B$ are shown. The node color of  \textbf{(b)} $q^+$,  \textbf{(c)} $q^-$ and  \textbf{(d)} $\reacdistnorm$ indicate the values of the respective statistics, ranging from yellow (low values) via orange to purple (high value).  \textbf{(e)} The strength of the effective current from one state $i$ to state $j$ is shown by the thickness and greyscale of the arrow pointing from $i$ to $j$. The node color of $A$ and $B$ indicates the sum of current flowing out respectively into the node, here this is just $\rate$.} \label{fig:example_inf}
\end{figure}

\section{TPT for Periodically Driven, Infinite-time Markov Chains} \label{sec:tpt_periodic}
 Many real-world systems showcase periodicity, for example any system subject to seasonal driving, or physical systems with periodic external stimuli.
 
For studying transitions in  these systems, we extend TPT to Markov chains with periodically varying transition probabilities  that are equilibrated to the forcing and cycle through  the same distributions each period. If the period is only one time step long, this case reduces to the  previous case of stationary, infinite-time dynamics.

We start by laying out the exact setting of the process that we consider, before turning to the computation of committors and transition statistics for periodically forced dynamics. As we will see, by writing the committor equations on a time-augmented state space, we can also find committors for systems with stochastic switching between different dynamical regimes.
 
\subsection{Setting} 
\Ass{Consider a Markov chain $(X_n)_{n\in\Z}$  on a finite and discrete state space $\St$ with transition probabilities $\trans_{ij}  (n) =   \prob(X_{n+1}=j \, | \, X_n=i)  $ that are periodically varying in time with period length $M\in \N$, i.e., the transition matrices fulfill $$ \trans (n) = \trans (n+M)\ \forall n\in \Z.$$}\label{ass:periodic_matrices}
Therefore  the transition matrices at the times within one period $\M := \{0,1,\dots,M-1\}$ are sufficient to describe all the dynamics and we denote them by 
$\trans_m := \trans(n)$ for time $n\in\Z$  congruent to $m\in\M$ modulo $M$.

The product of transition matrices over one $M-$period starting at a time equivalent to $m\in\M$ (modulo $M$) is denoted by  $\bar{P}_m := \trans_m \trans_{m+1} \cdots \trans_{m+M-1}$, which is again a transition matrix pushing the Markov chain $M$ time instances in time forward starting from time~$\equiv m \Mod{M}$, see  Figure \ref{fig:periodic_setting}. The chain described by $\bar{\trans}_m$ is not time-dependent anymore, it resolves the state of the original system only every $M$ time instance with a time-independent transition matrix $\bar{\trans}_m$.
    \begin{Prop}\label{thm:periodic_dist}
    If $\bar{\trans}_0$ is irreducible and assuming the setting \ref{ass:periodic_matrices} as described above, then for all $m\in \M$ there exists an  
    invariant distribution  $\pi_m$ such that $ \pi_m^\top  = \pi_m^\top \bar{\trans}_m $ of the transition matrix $\bar{\trans}_m$.
    Further, $\pi_0$ is unique and $\pi_{0,i}>0$ for all $i\in\St$. If we in addition require $ \pi_{m+1}^\top  = \pi_m^\top P_m$ for all $m$, then the entire family $(\pi_m)_{m=0,\ldots,M-1}$ is unique.\footnote{Sometimes such a family of invariant densities is called \emph{equivariant}.}
    \end{Prop}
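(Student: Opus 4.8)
The plan is to construct the entire family $(\pi_m)_{m\in\M}$ explicitly out of $\pi_0$, and to reduce every assertion to the Perron--Frobenius theorem applied to the single irreducible stochastic matrix $\bar{\trans}_0$ --- rather than looking for an invariant vector of each $\bar{\trans}_m$ separately, which a priori need not be unique since $\bar{\trans}_m$ is not assumed irreducible for $m\ge 1$. First, since $\bar{\trans}_0$ is a finite row-stochastic matrix it has at least one left-invariant probability vector (for instance by Brouwer's fixed point theorem applied to $x\mapsto\bar{\trans}_0^\top x$ on the probability simplex, or directly from Perron--Frobenius), and irreducibility makes this vector unique and strictly positive, by the same standard theory already used in Assumption~\ref{ass:ergodic} and Lemma~\ref{Lm:ex_uniq_ergodic}. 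Call it $\pi_0$; this already gives existence, uniqueness and positivity of $\pi_0$.

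Next I would build the other members of the family by pushing $\pi_0$ forward. For $m\in\M$, set $U_m := \trans_0\trans_1\cdots\trans_{m-1}$ (the empty product being the identity for $m=0$) and $V_m := \trans_m\trans_{m+1}\cdots\trans_{M-1}$, and define $\pi_m^\top := \pi_0^\top U_m$. The key identity is that $M$-periodicity, $\trans(n)=\trans(n+M)$, lets one rewrite the one-period matrix based at time $m$ as $\bar{\trans}_m = \trans_m\cdots\trans_{M-1}\trans_0\cdots\trans_{m-1} = V_m U_m$, while $\bar{\trans}_0 = U_m V_m$. Consequently $\pi_m^\top\bar{\trans}_m = \pi_0^\top U_m V_m U_m = (\pi_0^\top\bar{\trans}_0)\,U_m = \pi_0^\top U_m = \pi_m^\top$, so $\pi_m$ is left-invariant for $\bar{\trans}_m$, and it is a genuine probability vector because $\pi_0$ is one and $U_m$ is row-stochastic. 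This proves existence of the whole family. Since moreover $U_{m+1}=U_m\trans_m$, one also gets $\pi_{m+1}^\top = \pi_m^\top\trans_m$ for $0\le m\le M-2$, and $\pi_{M-1}^\top\trans_{M-1} = \pi_0^\top\bar{\trans}_0 = \pi_0^\top$; hence the constructed family already satisfies the equivariance relation (reading $\pi_M:=\pi_0$).

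For uniqueness of the equivariant family, suppose $(\tilde\pi_m)_{m\in\M}$ is any family of invariant distributions, $\tilde\pi_m^\top = \tilde\pi_m^\top\bar{\trans}_m$, additionally satisfying $\tilde\pi_{m+1}^\top = \tilde\pi_m^\top\trans_m$ for all $m$. Iterating this last relation gives $\tilde\pi_m^\top = \tilde\pi_0^\top\trans_0\cdots\trans_{m-1}$, so the family is completely determined by $\tilde\pi_0$; and closing the loop, $\tilde\pi_0^\top = \tilde\pi_{M-1}^\top\trans_{M-1} = \tilde\pi_0^\top\bar{\trans}_0$, so $\tilde\pi_0$ is a left-invariant distribution of $\bar{\trans}_0$ and therefore equals $\pi_0$ by the first step. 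Hence $\tilde\pi_m=\pi_m$ for every $m$.

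The proof is essentially bookkeeping together with standard finite-state Markov chain theory; the one point that needs care --- and the only place where periodicity is genuinely used --- is the cyclic-commutation identity $\bar{\trans}_0=U_mV_m$, $\bar{\trans}_m=V_mU_m$, which is precisely what propagates the Perron--Frobenius information available at time $0$ to all offsets $m$. I do not anticipate any real obstacle beyond keeping the index arithmetic modulo $M$ straight, in particular the wrap-around in the equivariance relation.
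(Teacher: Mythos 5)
Your proof is correct and takes essentially the same route as the paper: both construct $\pi_m^\top = \pi_0^\top \trans_0\cdots\trans_{m-1}$ and verify invariance under $\bar{\trans}_m$ by cyclically rearranging the one-period product, your $U_mV_m$ versus $V_mU_m$ identity being just a compact way of writing the paper's iterative computation. You in fact go slightly further: the paper's proof stops after the construction and leaves the uniqueness of the equivariant family implicit, whereas your closing argument (any equivariant family is determined by its time-$0$ member, which must be the unique invariant vector of $\bar{\trans}_0$) makes that step explicit.
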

    \begin{proof}
    Since $\bar{\trans}_0$ is irreducible and the state space is finite, the Markov chain induced by  $\bar{\trans}_0$  has a unique and positive  invariant  density $\pi_0=(\pi_{0,i})_{i\in\St}$ such that $\pi_0^\top \bar{\trans}_0 = \pi_0^\top$.
    It follows that also $\bar{\trans}_1$ has an invariant density, namely $\pi_1^\top:=\pi_0^\top \trans_0 $,  since it fulfills
    \begin{equation} 
        \pi_1^\top \bar{\trans}_1 = \pi_0^\top \trans_0  \bar{\trans}_1 =  \pi_0^\top \trans_0  \trans_1  \cdots \trans_{M-1}  \trans_{0}=  \pi_0^\top \bar{\trans}_0 \trans_{0} =  \pi_0^\top    \trans_{0}  =  \pi_1^\top.  \nonumber
    \end{equation}
    In the same way,  ${\bar{\trans}_m, m=2,\dots, M-1,}$ have an  invariant  distribution ${\pi_m^\top :=\pi_{m-1}^\top \trans_{m-1}}$, such that ${\pi_m^\top \bar{\trans}_m = \pi_m^\top}$.
    \end{proof} 

Thus, the densities $\pi_1,\dots \pi_{M-1}$ are not necessarily unique, unless we require irreducibility. Doing so, there is a unique periodic family of distributions that the chain can admit, and we call such a chain $M-$stationary; see Figure \ref{fig:periodic_setting}. Having the long-time behavior of chains in mind in this section, we will assume this property, relying on ergodicity.
    \begin{SCfigure}[1][!h]
    \centering
    \includegraphics[width=0.55\textwidth]{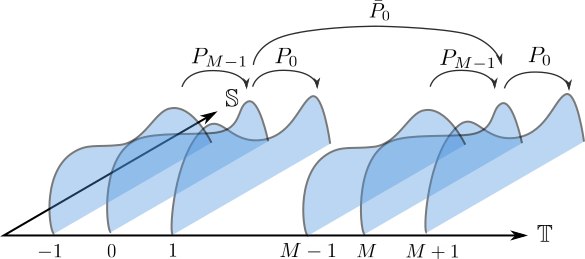}
    \caption{The evolution of densities for periodic dynamics assuming $M-$stationarity. The density evolution described by $\trans_m$, $ m\in\M$ varies periodically with time $n$. Whereas the dynamics described by $\bar{\trans}_m$ for a fixed $m$ are time-independent, they resolve the system only every $M$ time instances at times $n$ equivalent to $m$ modulo $M$.}
    \label{fig:periodic_setting}
\end{SCfigure}
\Ass{We   assume that $\bar{\trans}_0$ is irreducible and that the chain is $M$-stationary, i.e., $\prob(X_n=i)=\pi_{m,i}$ whenever $n$ is equivalent to $m$ modulo $M$.}\label{ass:periodic_densities}

Next, we   introduce the time-reversed chain $(\back{X}_n)_{n\in\Z}$ with $\back{X}_n = X_{-n}$. Due to $M-$stationarity the transition probabilities of the time-reversed chain are also $M-$periodic, and it is enough to give the transition probabilities backward in time $\transback_{m}$ for each time point during the period $m\in \M$
    \begin{equation} \label{eq:periodic_backward} \begin{split}
       \transback_{m,ij} :&=\prob(\back{X}_{M-m+1}=j|\back{X}_{M-m}=i) =  \prob(X_{m-1}=j|X_m=i) \nonumber \\
        &= \prob(X_m=i|X_{m-1}=j) \frac{\prob(X_{m-1}=j)}{\prob(X_{m}=i)} = \trans_{m-1,ji} \frac{\pi_{m-1,j}}{\pi_{m,i}} \nonumber
        \end{split} \end{equation}
        whenever $\pi_{m,i}>0$, else for $\pi_{m,i}=0$ we set $\transback_{m,ij} :=0$.

\subsection{Committor Probabilities}\label{sec:com_periodic}
We will first look at the forward and backward committors and the system of equations that can be solved to acquire them. The forward $q^+(n)$ respectively backward committor $q^-(n)$ is defined as before in  \eqref{eq:q_f_def} respectively \eqref{eq:q_b_def}, but now since the law of the Markov chain is the same every $M$ time instances, the committors also vary periodically and are identical every $M$ time steps, we therefore denote \begin{equation}\label{eq:q_periodic_def}
    \begin{split} 
    q^+_{m}&:= q^+(n) \\
    q^-_{m}&:= q^-(n) 
    \end{split}
    \end{equation}
    whenever  $n$ is equivalent to  $m\in\M$.\\
    Again, we consider non-empty and disjoint subsets $A$, $B$ of the state space.  It is  straightforward to extend the theory to  periodically varying sets $A = (A_m)_{m\in\M}$, $B=(B_m)_{m\in\M}$ defined on~$\St^M$.
    \begin{Thm} \label{Thm:q_fb_periodic}
    Suppose a Markov chain  meeting the Assumptions \ref{ass:periodic_matrices} and \ref{ass:periodic_densities}.
    Then the $M-$periodic forward committor $q^+_{m}=(q^+_{m,i})_{i\in\St}$  fulfills the following iterative  system with periodic conditions  $q_{M}^+=q_{0}^+$
\begin{equation}\label{eq:q_f_periodic}
\left\{ \begin{array}{rcll}
q_{m,i}^+ &=& \sum\limits_{j\in \St} \, \trans_{m,ij} \, q_{m+1,j}^+ & i \in C \\
    q_{m,i}^+ &=& 0& i \in A  \\
    q_{m,i}^+ &=& 1& i \in B  \\
\end{array}\right.
\end{equation}
 Whereas the $M-$periodic backward committor $q^-_{m}=(q^-_{m,i})_{i\in\St}$ satisfies
\begin{equation}\label{eq:q_b_periodic}
\left\{ \begin{array}{rcll}
q_{m,i}^- &=& \sum\limits_{j\in \St} \, \transback_{m,ij} \, q_{m-1,j}^- & i\in C \\
    q_{m,i}^- &=& 0& i \in B \\
    q_{m,i}^- &=& 1& i \in A  \\
\end{array}\right.
\end{equation}
where $q_{M}^-=q_{0}^-$.
    \end{Thm}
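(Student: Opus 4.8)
The plan is to derive the forward committor system in \eqref{eq:q_f_periodic} directly from the generic committor identity that was already established in the proof of Theorem~\ref{Thm:q_equ}, now keeping track of the time index. First I would fix $n \in \Z$ with $n \equiv m \Mod{M}$ and observe that the boundary conditions are immediate from the definition \eqref{eq:q_f_def}: if $i \in A$ then $\tau_A^+(n) = n < \tau_B^+(n)$, so $q^+_{m,i} = 0$, and if $i \in B$ then $\tau_B^+(n) = n < \tau_A^+(n)$, so $q^+_{m,i} = 1$. For $i \in C$ one has $\tau_A^+(n), \tau_B^+(n) \geq n+1$, so conditioning on $X_{n+1} = j$ and applying the law of total probability followed by the Markov property gives
\begin{equation*}
q^+_{m,i} = \sum_{j \in \St} \prob(X_{n+1} = j \mid X_n = i)\, \prob(\tau_B^+(n+1) < \tau_A^+(n+1) \mid X_{n+1} = j) = \sum_{j\in\St} \trans_{m,ij}\, q^+_{m+1,j},
\end{equation*}
where the last equality uses Assumption~\ref{ass:periodic_matrices} to identify $\prob(X_{n+1}=j\mid X_n=i) = \trans_{m,ij}$ and the definition \eqref{eq:q_periodic_def} together with $n+1 \equiv m+1 \Mod{M}$ to identify the conditional probability with $q^+_{m+1,j}$. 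This is verbatim the computation in Theorem~\ref{Thm:q_equ}, only with the time-dependent transition matrix inserted; the one new ingredient is that the committor on the right is evaluated one period-step later, which is exactly what produces the iterative (rather than fixed-point) structure.

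The periodic boundary condition $q^+_M = q^+_0$ is then just the observation that $M \equiv 0 \Mod{M}$, so by \eqref{eq:q_periodic_def} both symbols denote $q^+(n)$ for $n \equiv 0 \Mod M$; this is what closes the otherwise open iteration into a well-posed system of $M$ coupled linear equations. For the backward committor \eqref{eq:q_b_periodic} I would run the mirror-image argument using the time-reversed chain $(\back X_n)_{n\in\Z}$ introduced above, whose transition probabilities at period-phase $m$ are the $\transback_{m,ij}$ from \eqref{eq:periodic_backward}. The definition \eqref{eq:q_b_def} of $q^-$ rewrites, via $\back X_n = X_{-n}$, as a forward first-passage probability for $\back X$; the boundary conditions $q^-_{m,i} = 0$ for $i \in B$ and $=1$ for $i\in A$ follow as before, and for $i \in C$ conditioning on $\back X$ one step forward — equivalently on $X_{n-1} = j$ — gives $q^-_{m,i} = \sum_j \transback_{m,ij}\, q^-_{m-1,j}$, with the index decreasing because a forward step for the reversed chain is a backward step for the original one. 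Here $M$-stationarity (Assumption~\ref{ass:periodic_densities}) is what guarantees the reversed chain is itself $M$-periodic, so that $\transback_m$ and $q^-_m$ are genuinely well-defined functions of the phase $m$, and the periodic closure $q^-_M = q^-_0$ holds for the same reason as in the forward case.

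The only genuine subtlety — and the step I would be most careful about — is the manipulation $\prob(\tau_B^+(n) < \tau_A^+(n) \mid X_{n+1} = j, X_n = i) = \prob(\tau_B^+(n+1) < \tau_A^+(n+1) \mid X_{n+1} = j, X_n = i) = q^+_{m+1,j}$: the first equality needs that $i \in C$ forces both hitting times to be at least $n+1$ so the event does not depend on the state at time $n$, and the second drops the conditioning on $X_n = i$ by the Markov property applied to the event $\{\tau_B^+(n+1) < \tau_A^+(n+1)\}$, which lies in the $\sigma$-algebra generated by $X_{n+1}, X_{n+2}, \dots$ — precisely the kind of future event for which the generalized Markov property cited in the footnote to Theorem~\ref{thm_reacdist} applies. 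Everything else is bookkeeping of the congruence $n \equiv m \Mod M$. I would also remark briefly, as the text already hints, that the whole argument is agnostic to whether the time-augmented transition structure comes from periodic forcing or from stochastic regime-switching, so the same proof covers that case.
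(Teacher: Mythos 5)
Your proposal is correct and follows essentially the same route as the paper's proof: boundary conditions read off from the definition of the hitting times, and for $i\in C$ the chain of law of total probability, conditioning on $X_{n+1}=j$, the fact that $\tau_A^+(n),\tau_B^+(n)\geq n+1$, and the Markov property, with the periodicity of the law (Assumptions~\ref{ass:periodic_matrices} and \ref{ass:periodic_densities}) identifying the resulting objects with $\trans_{m}$ and $q^+_{m+1}$, and the backward case handled by the mirror argument for the time-reversed chain. Your explicit justification of the step dropping the conditioning on $X_n=i$ is exactly the (strong) Markov property step the paper invokes, so there is no gap.
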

    The proof follows the lines of the proof above for the stationary, infinite-time case  and can be found in the Appendix \ref{app:proofs}.
    
    Before proving that the two systems are uniquely solvable, we characterize the forward and backward committors in terms of the path probabilities over one period $\M$:
    \begin{Lemma}\label{Lm:stacked_periodic}
    For any time $n=m$ modulo $M$ and $i \in C$ the committor functions \eqref{eq:q_periodic_def} satisfy the following equalities
        \begin{equation} \label{eq:q_f_stacked_m} 
    \begin{split}
     q_{m,i}^+  &= \sum_{i_1 \dots i_{M} \in C} \trans_{m,ii_1}  \trans_{m+1,i_1i_2} \cdots \trans_{m+M-1,i_{M-1} i_M}  q_{m,i_M}^+ + \sum_{\tau=1}^{M} \sum_{\substack{i_1\dots i_{\tau-1} \in C \\ i_{\tau}\in B}} \trans_{m,i i_1}  \cdots \trans_{m+\tau-1,i_{\tau-1}i_\tau}
    \end{split}
    \end{equation}
    \begin{equation} \label{eq:q_b_stacked_m}
    \begin{split}
     q_{m,i}^-  &= \sum_{i_{1} \dots i_{M} \in C} \transback_{m,ii_{1}}  \transback_{m-1,i_{1}i_{2}} \cdots \transback_{m-M+1,i_{M-1} i_{M}}  q_{m,i_{M}}^- + \sum_{\tau=1}^{M} \sum_{\substack{i_1\dots i_{\tau-1} \in C \\ i_{\tau}\in A}} \transback_{m,i i_1}  \cdots \transback_{m-\tau+1,i_{\tau-1}i_\tau}   
    \end{split}
    \end{equation}
    \end{Lemma}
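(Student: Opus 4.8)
The plan is to unfold the one-step recursion \eqref{eq:q_f_periodic} established in Theorem~\ref{Thm:q_fb_periodic} exactly $M$ times and read off the result. Fix $m\in\M$ and $i\in C$. First I would apply \eqref{eq:q_f_periodic} once and split the sum over $j\in\St$ into the parts $j\in C$, $j\in A$, $j\in B$; using the boundary values $q_{m+1,j}^+=0$ for $j\in A$ and $q_{m+1,j}^+=1$ for $j\in B$ this yields
\begin{equation}
q_{m,i}^+ = \sum_{i_1\in C}\trans_{m,ii_1}\,q_{m+1,i_1}^+ \; + \; \sum_{i_1\in B}\trans_{m,ii_1}.
\end{equation}
The surviving committor terms on the right now sit at states in $C$, so \eqref{eq:q_f_periodic} applies to each of them again, at time $m+1$; iterating, I would prove by induction on $k=1,\dots,M$ that
\begin{equation}
q_{m,i}^+ = \sum_{i_1\dots i_k\in C}\trans_{m,ii_1}\cdots\trans_{m+k-1,i_{k-1}i_k}\,q_{m+k,i_k}^+ \; + \; \sum_{\tau=1}^{k}\;\sum_{\substack{i_1\dots i_{\tau-1}\in C\\ i_\tau\in B}}\trans_{m,ii_1}\cdots\trans_{m+\tau-1,i_{\tau-1}i_\tau},
\end{equation}
with the convention that for $\tau=1$ the inner product reduces to the single factor $\trans_{m,ii_1}$, $i_1\in B$. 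The induction step is precisely the one-step splitting above applied to each term $q_{m+k,i_k}^+$ with $i_k\in C$.

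Taking $k=M$ and invoking the periodic boundary condition $q_M^+=q_0^+$ (shifted by $m$: $q_{m+M,i_M}^+=q_{m,i_M}^+$), the remaining committor factor becomes $q_{m,i_M}^+$, which is exactly \eqref{eq:q_f_stacked_m}. For the backward committor I would run the identical argument on \eqref{eq:q_b_periodic}: iterate the backward recursion with the time-reversed matrices $\transback_m$, decrementing the time index at each step, use $q_{m,j}^-=0$ for $j\in B$ and $q_{m,j}^-=1$ for $j\in A$ to drop the $B$-terms and collect the $A$-terms, and close the loop with $q_{m-M}^-=q_m^-$; this gives \eqref{eq:q_b_stacked_m}.

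There is no genuine obstacle here beyond careful index bookkeeping. The one point that deserves attention is the telescoping structure of the $\tau$-sum: a path contributes to the $\tau$-th term exactly when its first $\tau-1$ intermediate states lie in $C$ and its $\tau$-th step lands in $B$, so the length-$M$ paths through $C$ (the ones still weighting $q_m^+$) together with the $B$-terminating paths of lengths $\tau=1,\dots,M$ partition all possibilities with no overlap and no omission; getting the boundary case $\tau=1$ and the time shifts in the matrices $\trans_{m+\tau-1,\cdot\,\cdot}$ right is the only place one can slip.
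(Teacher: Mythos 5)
Your proposal is correct and is essentially the paper's own proof: the paper likewise starts from the one-step relation obtained from \eqref{eq:q_f_periodic} with the boundary values splitting off the $B$-terms, then re-inserts the committor equations iteratively over the period and closes with $q_0^+=q_M^+$ (analogously for the backward committor). Your explicit induction on $k$ is just a slightly more formal write-up of the same unfolding.
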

    \begin{proof}
    First it follows from \eqref{eq:q_f_periodic} for $i\in C$    that
\begin{equation}
    q_{m,i}^+  = \sum_{i_1\in C} \trans_{m,ii_1} q_{m+1,i_1}^+ + \sum_{i_1\in B} \trans_{m,i i_1} 
\end{equation}
since $q^+_{m+1,i_1} = 1$ if $ i_1\in B $, $q^+_{m+1,i_1} = 0$ if $ i_1\in A $. By inserting the committor equations at the following times iteratively and by using that $q^+_0=q^+_M$, we get \eqref{eq:q_f_stacked_m}. We can proceed analogously for the backward committor, starting from \eqref{eq:q_b_periodic}   and re-inserting committor equations.  
\end{proof}
The equation \eqref{eq:q_f_stacked_m} with $m=0$ only contains one unknown and can be solved, e.g., numerically for all $i\in C$, whereas for $A$ respectively $ B$ the committor is simply $0$ respectively $1$. The committor for the remaining times $m=1,2,..$ can then be computed thereof by using \eqref{eq:q_f_periodic}. Analogously for \eqref{eq:q_b_stacked_m}.

The time-resolution of the Markov chain during the period is important for the committors since we can resolve hitting events of $B$ within the period. The committors one would compute for a more coarsely resolved chain without state information during the period, i.e., for the chain described by $\bar{P}_0$ (time-homogeneous, but mapping one period in time forward),  will not notice that the chain has hit $B$ at times other than $m=0$. To see that, compare Lemma \ref{Lm:stacked_periodic}  with   Lemma \ref{Lm:q_fb_all_paths} using $\bar{P}_0$.
\begin{Lemma} \label{Lm:ex_uniq_periodic}
By the irreducibility of $\bar{\trans}_0$, the solutions to \eqref{eq:q_f_periodic} and \eqref{eq:q_b_periodic} exist and are unique.
\end{Lemma}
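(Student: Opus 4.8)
The plan is to collapse each of the two coupled periodic systems to a single $|C|$-dimensional linear equation and then to show that the associated matrix has spectral radius strictly below one; this is the point at which the irreducibility of $\bar{\trans}_0$ is used. (Note that the existence of at least one solution is already furnished by Theorem~\ref{Thm:q_fb_periodic}, where the probabilistic committors are shown to solve the systems, so the substantive claim is uniqueness --- which the invertibility below also yields.) Concretely, for the forward committor I would apply Lemma~\ref{Lm:stacked_periodic} with $m=0$: writing $x:=(q^+_{0,i})_{i\in C}$, equation~\eqref{eq:q_f_stacked_m} takes the form $x=\hat N x+b$, where $\hat N=(\hat N_{ii'})_{i,i'\in C}$, $\hat N_{ii'}:=\sum_{i_1,\dots,i_{M-1}\in C}\trans_{0,ii_1}\trans_{1,i_1i_2}\cdots\trans_{M-1,i_{M-1}i'}$, is the one-period transition matrix of the chain restricted to staying in $C$, and $b=(b_i)_{i\in C}\ge 0$ collects the weights of the path pieces that hit $B$ somewhere inside the period. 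Conversely, any solution of $(I-\hat N)x=b$ extends to a solution of the full system~\eqref{eq:q_f_periodic}: set $q^+_0$ equal to $x$ on $C$, to $0$ on $A$ and to $1$ on $B$, then define $q^+_{M-1},\dots,q^+_1$ successively from the first line of~\eqref{eq:q_f_periodic} with $q^+_M:=q^+_0$; these satisfy the $m=0$ equation automatically, since re-substituting them into it is precisely the computation that produced~\eqref{eq:q_f_stacked_m}. It therefore suffices to prove that $I-\hat N$ is invertible.

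To that end I would observe that $\hat N$ is substochastic and that the $i$-th row sum of $\hat N^{k}$ equals the probability that the $M$-periodic chain started from $X_0=i\in C$ remains in $C$ throughout the times $1,\dots,kM$. This probability tends to $0$: fixing $b^\star\in B$, irreducibility of $\bar{\trans}_0$ gives, for each $i\in C$, a $\bar{\trans}_0$-path of some length $\ell_i$ from $i$ to $b^\star$, and unfolding each of its steps into the $M$ underlying one-step transitions produces a fine-chain path of length $\ell_iM$ from $i$ to $b^\star$ of positive probability, along which the chain must leave $C$ before time $\ell_iM$ (it ends at $b^\star\notin C$). Hence, with $L:=M\max_{i\in C}\ell_i$ (a multiple of $M$) and $\varepsilon:=\min_{i\in C}\prob(\text{the chain started at }i\text{ leaves }C\text{ within }L\text{ steps})>0$, a minimum over the finite set $C$, the Markov property applied at times $0,L,2L,\dots$ --- using $M$-periodicity to realign the transition matrices over each block of $L$ steps --- shows that the probability of staying in $C$ for $kL$ steps is at most $(1-\varepsilon)^{k}\to 0$. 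Thus $\hat N^{k}\to 0$ entrywise, $\rho(\hat N)<1$, $I-\hat N$ is invertible, and~\eqref{eq:q_f_periodic} has the unique solution built above.

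The backward committor is handled by the identical argument applied to the time-reversed $M$-periodic chain with matrices $\transback_m$, starting from~\eqref{eq:q_b_stacked_m} with $m=0$ and with $A$ in the role of the absorbing target; the one extra ingredient is that the corresponding one-period time-reversed dynamics are again irreducible, which holds because time reversal with respect to the equivariant family $(\pi_m)_{m\in\M}$ of Proposition~\ref{thm:periodic_dist} preserves irreducibility. I expect the estimate in the second paragraph to be the main obstacle: irreducibility of $\bar{\trans}_0$ only constrains the chain observed every $M$ steps (and only says that connecting paths exist), whereas what is needed is a genuine, uniform-over-$C$ lower bound on the probability that the \emph{fine} chain escapes $C$ within a bounded number of steps --- turning the former into the latter, while keeping $\varepsilon>0$ uniform (which is exactly where finiteness of $C$ enters), is the crux.
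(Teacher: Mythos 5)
Your proposal is correct, and its overall architecture coincides with the paper's: both collapse the periodic system via Lemma~\ref{Lm:stacked_periodic} with $m=0$ to a single $|C|$-dimensional equation $(I-\hat N)x=b$, where $\hat N$ is exactly the paper's matrix $D=\restr{\trans_0}{C\rightarrow C}\cdots\restr{\trans_{M-1}}{C\rightarrow C}$, and both then reduce everything to $\rho(\hat N)<1$, recovering $q^+_1,\dots,q^+_{M-1}$ afterwards from \eqref{eq:q_f_periodic} and treating the backward case by time reversal. Where you differ is in the proof of $\rho(\hat N)<1$. The paper argues via an $\ell_1$-contraction: $\hat N$ is substochastic, irreducibility of $\bar{\trans}_0$ yields at least one row $i^*$ with row sum strictly below $1$, and then $\lVert v\hat N\rVert_1<\lVert v\rVert_1$ is claimed for all $v$, in particular for eigenvectors. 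You instead show $\hat N^{k}\to 0$ entrywise by a probabilistic escape-time argument: from every $i\in C$ there is a positive-probability fine-chain path into $A\cup B$ within $L$ steps, the minimum escape probability $\varepsilon$ over the finite set $C$ is positive, and the Markov property at times $0,L,2L,\dots$ gives the geometric bound $(1-\varepsilon)^{k}$. Your route is slightly longer but actually more watertight: a substochastic matrix with a single deficient row can still have spectral radius $1$ if that row is unreachable (and the paper's displayed chain of inequalities is only strict when $v_{i^*}\neq 0$), whereas your argument establishes the uniform-over-$C$ reachability of $A\cup B$ that is really needed, which is precisely the ``crux'' you flag at the end. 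So your version buys robustness at the cost of a hitting-time estimate; the paper's buys brevity by leaning on a linear-algebraic shortcut that, as literally written, leaves this reachability point implicit.
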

The proof  can be found in the Appendix \ref{app:proofs}.
\remark{The committor equations can also be written on a time-augmented state space using a period-augmented transition matrix that pushes the dynamics deterministically forward in time
    $$ \trans_\text{Aug} =
\begin{pmatrix}
0 &  \trans_0 &  & 0 \\
  &  \ddots & \ddots & \\
 &   &  0&\trans_{M-2}  \\
\trans_{M-1} & &   & 0
\end{pmatrix}.
$$ 
Extending this approach, one can also consider committor equations for systems that switch stochastically with probabilities $\hat{P}\in\R^{M\times M}$ between $M$ different regimes, each regime is described by a transition matrix $P_m$. This is essentially the Markov-chain analogue of a \emph{random dynamical system}~\cite{arnold1995random}.
The regime-augmented transition matrix is given by
    $$ \trans^{\text{switch}}_\text{Aug} =
\begin{pmatrix}
\hat{P}_{11} P_1 &  \dots   & \hat{P}_{1M} P_M \\
\vdots & \ddots &  \vdots\\
\hat{P}_{M1} P_1&  \dots  & \hat{P}_{MM} P_M
\end{pmatrix}.
$$ 
We refer to the Appendix \ref{app:augment_periodic} for more details on both ansatzes and the computation of committor probabilities on the augmented space. 
The augmented approach also offers a numerical way of solving the committor equations with periodic boundary conditions.}\label{rem:periodic_augmented}

\subsection{Transition Statistics}

We have seen that the forward and backward committor in the case of periodically driven dynamics are also $M-$periodic and can be computed from the iterative equations \eqref{eq:q_f_periodic}, \eqref{eq:q_b_periodic} with periodic conditions in time. Since committors, densities and transition matrices are $M-$periodic, all statistics computed thereof are so too by the theory in Section~\ref{sec:statistics}, and we equip them with a subscript $m$, e.g., $\reacdistnorm(n) = \reacdistnorm_m$, $\current(n) = \current_{m} $, whenever $n\equiv m$ modulo~$M$.

Compared to the previous case of stationary, infinite-time, the  discrete rate of reactive trajectories leaving $A$ at time $m$,  $\smash{ \rateA_m = \sum_{i\in A, j\in\St} \current_{ij}(m) }$, does not anymore equal the discrete rate of reactive trajectories  arriving in $B$ at time $m$, $\smash{ \rateB_m = \sum_{i\in \St, j\in B} \current_{ij}(m-1) }$. 

The next theorem provides us with the reactive current conservation laws in the case of periodic dynamics and will allow us to find the relation between $\rateA_m$ and $\rateB_m$.
 \begin{Thm} \label{Thm:conservationlaw_periodic} 
 Consider a Markov chain satisfying Assumptions  \ref{ass:periodic_matrices} and \ref{ass:periodic_densities}. Then,  for each node $i\in C$ and time $m\in\M$ we have the following
 current conservation law 
\begin{align}
\sum_{j\in \St} &   f_{m,ij}^{AB} = \sum_{j\in \St} f_{m-1,ji}^{AB},
\end{align} 
i.e., all the reactive trajectories that flow out of $i$ at time (congruent to) $m$, have flown into $i$ at time equivalent to $m-1$. 

Further, over one period the amount of reactive flux leaving $A$ is the same as the amount of flux entering $B$, i.e., 
\begin{align} \label{eq:flux_AB_m}
    \sum_{m\in\M} \sum\limits_{\substack{i \in A\\ j\in \St}} f_{m,ij}^{AB} =  \sum_{m\in\M}  \sum\limits_{\substack{i\in \St\\ j\in B}} f_{m,ij}^{AB}.
\end{align}
\end{Thm}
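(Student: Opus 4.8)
The plan is to establish the local (node-wise) conservation law first, directly from the explicit formula for the reactive current, and then to derive the global balance \eqref{eq:flux_AB_m} by summing over a full period.

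\textbf{Step 1 (local law).} By Theorem~\ref{thm_reaccurr} the reactive current is $f_{m,ij}^{AB} = q^-_{m,i}\,\pi_{m,i}\,\trans_{m,ij}\,q^+_{m+1,j}$. Summing over $j\in\St$ and using the forward committor equation \eqref{eq:q_f_periodic} at the node $i\in C$ gives
\[
\sum_{j\in\St} f_{m,ij}^{AB} \;=\; q^-_{m,i}\,\pi_{m,i}\sum_{j\in\St}\trans_{m,ij}\,q^+_{m+1,j} \;=\; q^-_{m,i}\,\pi_{m,i}\,q^+_{m,i}.
\]
For the incoming flux I would use the defining identity of the backward transition probabilities, $\pi_{m-1,j}\,\trans_{m-1,ji} = \pi_{m,i}\,\transback_{m,ij}$, together with the backward committor equation \eqref{eq:q_b_periodic} at $i\in C$:
\[
\sum_{j\in\St} f_{m-1,ji}^{AB} \;=\; q^+_{m,i}\sum_{j\in\St} q^-_{m-1,j}\,\pi_{m-1,j}\,\trans_{m-1,ji} \;=\; q^+_{m,i}\,\pi_{m,i}\sum_{j\in\St}\transback_{m,ij}\,q^-_{m-1,j} \;=\; q^+_{m,i}\,\pi_{m,i}\,q^-_{m,i}.
\]
Both sides equal $q^-_{m,i}\,\pi_{m,i}\,q^+_{m,i}$, which is the first assertion. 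If $\pi_{m,i}=0$ the left-hand side vanishes termwise, and since then $\pi_{m,i}=\sum_j\pi_{m-1,j}\trans_{m-1,ji}=0$ forces $\pi_{m-1,j}\trans_{m-1,ji}=0$ for every $j$, the right-hand side vanishes too, so this degenerate case is harmless.

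\textbf{Step 2 (global balance).} From the same current formula and the boundary values $q^+_{m,i}=0$ on $A$ and $q^-_{m,i}=0$ on $B$, the reactive current never flows into $A$ and never flows out of $B$. Writing $F_m := \sum_{i,j\in\St} f_{m,ij}^{AB}$ for the total reactive flux over the step $m\to m+1$ and splitting this sum over the partition $\St = A\cup C\cup B$, once by the first index and once by the second index, I get
\[
F_m \;=\; \rateA_m + \sum_{i\in C}\sum_{j\in\St} f_{m,ij}^{AB} \qquad\text{and}\qquad F_{m-1} \;=\; \rateB_m + \sum_{i\in\St}\sum_{j\in C} f_{m-1,ij}^{AB},
\]
using $\rateA_m=\sum_{i\in A,j\in\St}f_{m,ij}^{AB}$ and $\rateB_m=\sum_{i\in\St,j\in B}f_{m-1,ij}^{AB}$. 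Summing the local law of Step~1 over $i\in C$ and relabelling the summation indices yields $\sum_{i\in C}\sum_{j\in\St} f_{m,ij}^{AB} = \sum_{i\in\St}\sum_{j\in C} f_{m-1,ij}^{AB}$; substituting this into the two displays gives the recursion $F_m - F_{m-1} = \rateA_m - \rateB_m$. Since the committors $q^\pm_m$, the densities $\pi_m$ and the matrices $\trans_m$ are all $M$-periodic, so is $F_m$, and summing the recursion over $m\in\M$ telescopes to $0$, i.e.\ $\sum_{m\in\M}\rateA_m = \sum_{m\in\M}\rateB_m$. Reindexing $\sum_{m\in\M}\rateB_m = \sum_{m\in\M}\sum_{i\in\St,j\in B}f_{m-1,ij}^{AB} = \sum_{m\in\M}\sum_{i\in\St,j\in B}f_{m,ij}^{AB}$ by periodicity then turns this into exactly \eqref{eq:flux_AB_m}.

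\textbf{Expected obstacle.} There is no substantial difficulty: the whole argument is algebraic bookkeeping built on Theorem~\ref{thm_reaccurr} and the committor systems \eqref{eq:q_f_periodic}--\eqref{eq:q_b_periodic}. The points needing care are (a) tracking the time-index shifts consistently in Step~2, since $\rateB$ is defined with a one-step delay relative to the current, so the flux leaving $A$ on a given step must be matched with the flux entering $B$ on that same step for the periodic telescoping to close; and (b) the fact that the equivariant family $(\pi_m)_m$ need not be strictly positive away from $m=0$ (cf.\ Proposition~\ref{thm:periodic_dist}), which is why the case $\pi_{m,i}=0$ is handled separately in Step~1.
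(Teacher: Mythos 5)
Your proof is correct and follows essentially the same route as the paper's: Step~1 is the paper's computation verbatim (current formula, forward committor equation, the identity $\pi_{m,i}\transback_{m,ij}=\pi_{m-1,j}\trans_{m-1,ji}$, backward committor equation), and Step~2 is the same $A\cup C\cup B$ splitting and cancellation, merely repackaged as a telescoping recursion for the total flux $F_m$ using $M$-periodicity. Your explicit treatment of the degenerate case $\pi_{m,i}=0$ is a small additional care the paper leaves implicit, but it does not change the argument.
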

The proof can be found in the Appendix \ref{app:proofs} and follows from straightforward computations using the committor equations to rewrite the reactive current.

As a result of \eqref{eq:flux_AB_m}, the discrete out-rate averaged  over one period equals the average discrete in-rate, which we define to be $\bar{k}^{AB}_{M}$, 
i.e., 
$$  \bar{k}^{AB}_{M}:= \frac{1}{M} \sum_{m\in\M} \rateA_m  =  \frac{1}{M} \sum_{m\in\M} \rateB_m .$$ 
This period-averaged discrete rate tells us the average probability per time step of a reactive trajectory to depart in $A$ or in other words, the expected number of reactive trajectories leaving $A$ per time step. 
\subsection{Numerical Example 2: Periodically Varying Dynamics  on a 5-state Network}
\label{sub:network_periodic}

\begin{figure}
\centering
\begin{subfigure}[c]{0.3\textwidth}
\centering
\includegraphics[width=1\textwidth]{img/tikz_T_plus_L.png}
\caption{}
\end{subfigure}
\begin{subfigure}[c]{0.3\textwidth}
\centering
\includegraphics[width=1\textwidth]{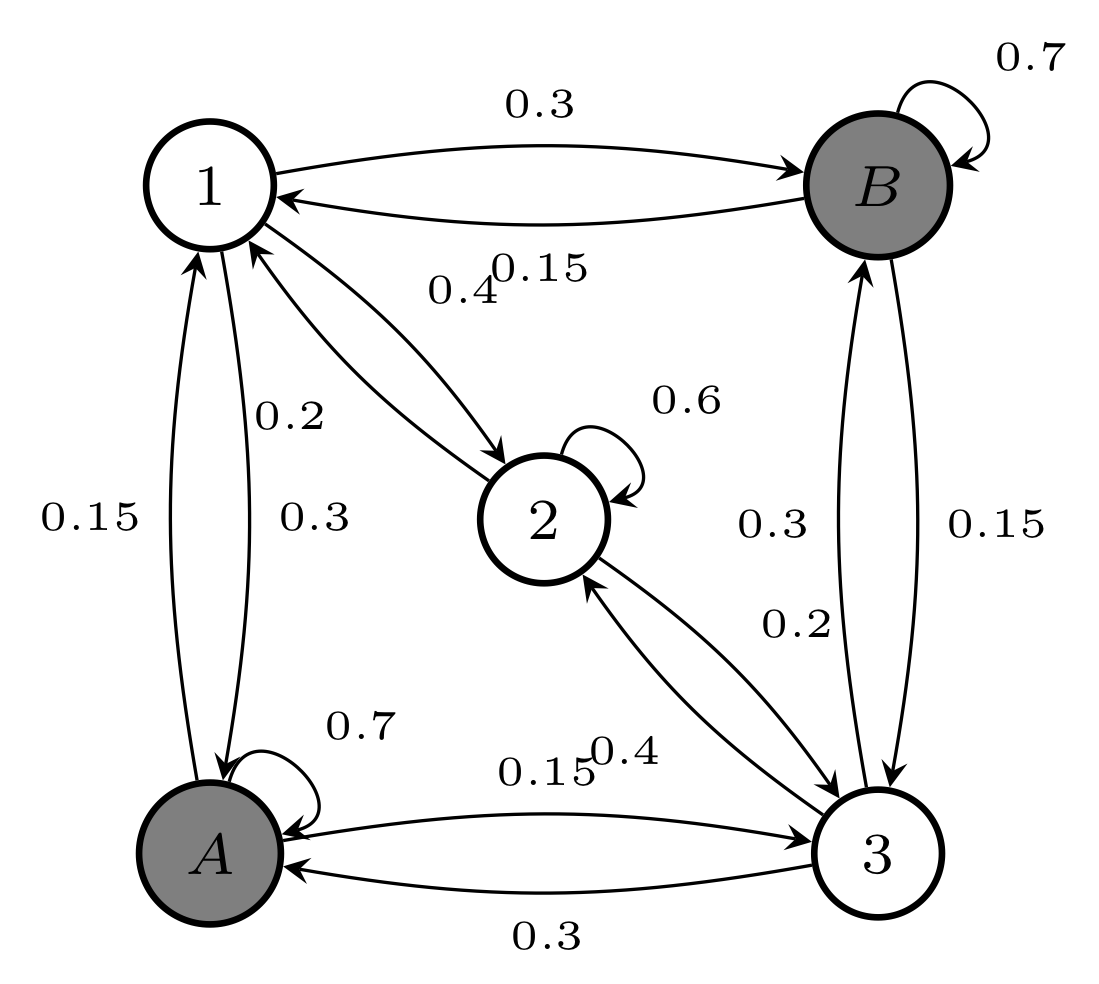}
\caption{}
\end{subfigure}
\begin{subfigure}[c]{0.3\textwidth}
\centering
\includegraphics[width=1\textwidth]{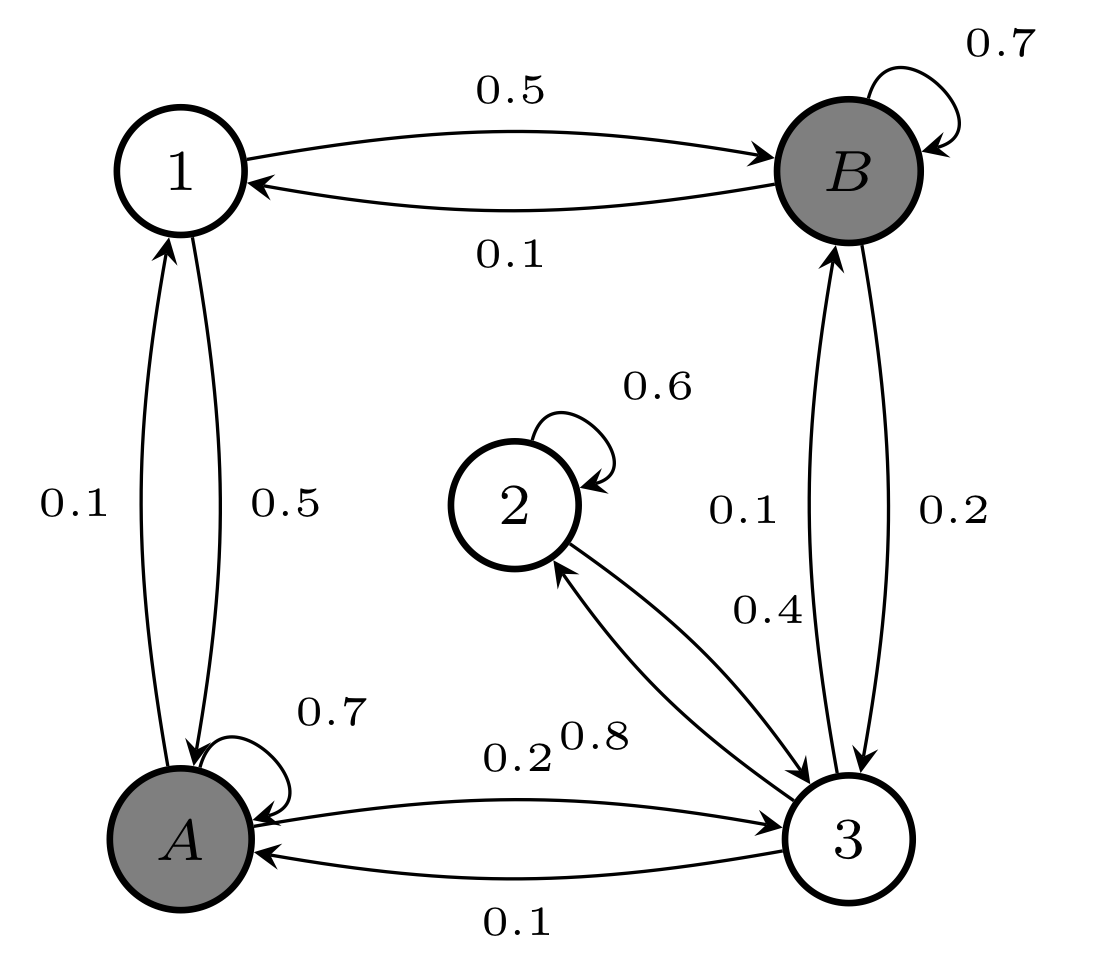}
\caption{}
\end{subfigure}
\caption{The dynamics of the periodic system  of \hyperref[sub:network_periodic]{Example 2} are given by the transition matrix $P_m = T + \cos\left(\frac{m\pi}{3}\right)L$ for the times $m = n \, (\text{mod 6})$;  shown here are the transition matrices \textbf{(a)} $T+L$, \textbf{(b)} $T$ and \textbf{(c)} $T-L$. }\label{fig:example_p_trans}
\end{figure}

\begin{figure}
\centering
\begin{subfigure}[t]{0.96\textwidth}
\centering
\includegraphics[width=1\textwidth]{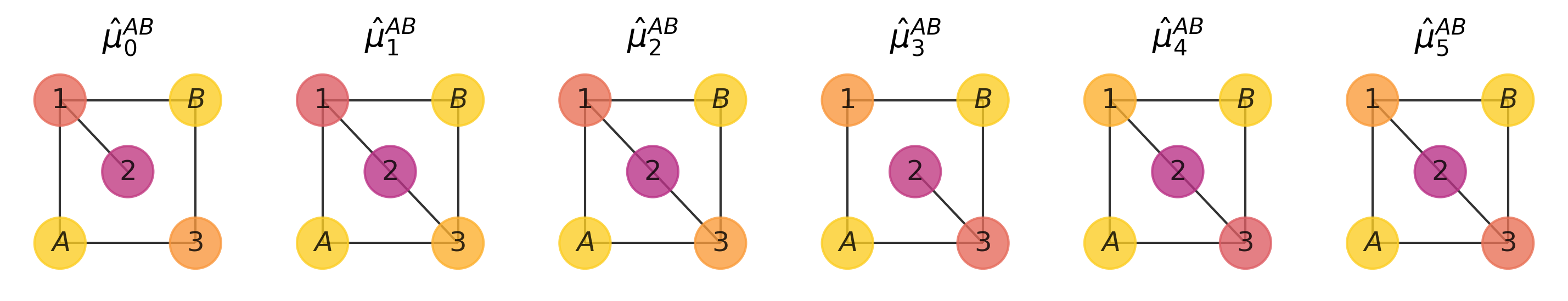}
\caption{}
\end{subfigure}
\begin{subfigure}[t]{0.03\textwidth}
\centering
\includegraphics[width=1\textwidth]{img/small_network_colorbar.png}
\end{subfigure}
\begin{subfigure}[t]{0.96\textwidth}
\centering
\includegraphics[width=1\textwidth]{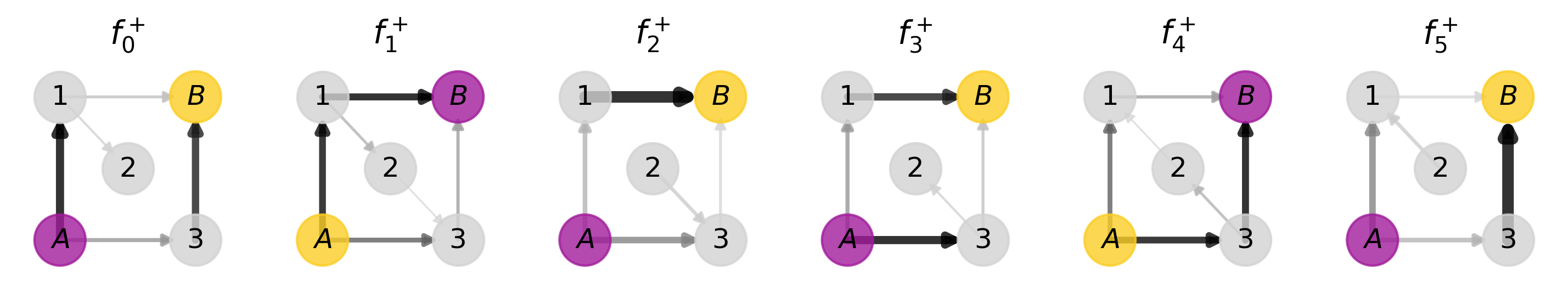}
\caption{}
\end{subfigure}
\begin{subfigure}[t]{0.03\textwidth}
\centering
\includegraphics[width=1\textwidth]{img/small_network_colorbar.png}
\end{subfigure}

\caption{We show statistics of \hyperref[sub:network_periodic]{Example 2}, namely,  \textbf{(a)} the  normalized distribution which  is shown by the color of the nodes, ranging from yellow (low values) via orange to purple (high value), as well as  \textbf{(b)} the  effective current from one state $i$ to state $j$ that is indicated by the thickness and greyscale of the arrow pointing from $i$ to $j$.  The node color of the effective current plot indicates the outflow of $A$ and the inflow into $B$ of current at the corresponding time point $m$, which can be related to the $\rateA_m$ and $\rateB_{m+1}$.}
\label{fig:example_p}
\end{figure} 

 We consider the same $5-$state network as before in \hyperref[sub:network_ergodic]{Example 1}, but this time the Markov chain is in equilibrium to the periodically varying  transition probabilities
\[
\trans_m = T + \cos\left(\frac{2m \pi}{M}\right)\, L,\ m\in\{0,\dots,M-1\},
\]
with period length~$M=6$. The transition matrices are chosen such that for time $m=0$ the dynamics are the same as in the stationary Example \hyperref[sub:network_ergodic]{1}, $P_0=T+L=P$, whereas at time $m=3$ the dynamics are reversed $P_3 = T-L$. Matrix  $T$ is a fixed row-stochastic matrix which has transition probabilities that are symmetric along the axis through $A$ and $B$ and $L$ is a 0-rowsum matrix that is not symmetric along the axis through $A$ and~$B$. The probabilities of the transition matrices are shown in Figure \ref{fig:example_p_trans}.

With the numerically computed transition statistics as shown in Figure \ref{fig:example_p} we try to answer whether the added perturbation results in alternative effective paths compared to \hyperref[sub:network_ergodic]{Example 1}.  
The effective current indicates that the most likely effective transition path from $A$ to $B$   either goes via $A \rightarrow 1\rightarrow B$ (with a likely detour to $2$) during the first half of the period, or via $A\rightarrow 3\rightarrow B$ (with a detour to $2$) towards the second half   of the period. But interestingly, we have additional transition paths that go via $A\rightarrow 1\rightarrow 2\rightarrow 3\rightarrow B$ and~$A\rightarrow 3\rightarrow 2\rightarrow 1\rightarrow B$.   Neither in the stationary system described by $P_0=T+L$ nor by $P_3 = T-L$ this path would be possible. Additionally, the period-averaged rate $\bar{k}^{AB}_{6} = 0.034$ is higher than the rate in the stationary case (\hyperref[sub:network_ergodic]{Example 1}).

\section{TPT for Markov Chains on a Finite Time Interval} \label{sec:tpt_finite}
We now develop TPT for  Markov chains with the transitions of interest taking place during a finite time interval. The transition rules can be time-dependent and the dynamics can be non-stationary, i.e., out of equilibrium. 
 
The resulting committor equations (Section \ref{sec:com_finite}) and statistics (Section \ref{sec:stat_finite}) in case of finite-time dynamics are similiar as in the periodic case, yet there are some  distinctions. The committor equations are now equipped with final respectively initial conditions and the statistics show some boundary effects at the  time interval limits.

In Section \ref{sec:convergence_finite} we also provide a consistency result between the stationary, infinite-time case and the  finite-time case for a stationary Markov chain, by considering the limit of the time-interval going to infinity. 

\subsection{Setting}
Let us start by describing the systems of interest  in this section.
\Ass{We consider a Markov chain on a finite time interval $(X_n)_{0 \leq n \leq N-1}$, $N \in \N$, taking values in a discrete and finite space $\St$.
The probability of a jump from the state $i \in \St$ to the state $j \in \St$ at time $n \in \{0, \dots, N-2\}$ is given by the $(i,j)-$entry of the row-stochastic transition matrix $\trans(n) = (\trans_{ij}(n))_{i,j \in \St}$:
\[ P_{ij}(n) := \prob(X_{n+1} = j \,|\, X_n = i) . \] 
Setting the initial density $\lambda(0) = (\lambda_i(0))_{i \in \St}$, the densities at later times $n\in\{1,\dots,N-1\}$ are given by $\lambda(n+1)^\top = \lambda(n)^\top P(n) $. 
}\label{ass:finite_chain}

By these assumptions, the chain can have time-inhomogeneous transition probabilities, or even if $P(n)=P$ for all $n$, the densities $\lambda(n)$ can be changing in time. Also, we are not requiring the chain to be irreducible anymore.

The time-reversed process $(\back{X}_n)_{0 \leq n \leq N-1}$ defined by ${\back{X}_n := X_{N-1-n}}$ is also a Markov chain \cite[Thm 2.1.18]{RiberaBorrell2019}. Its transition probabilities are given for any $n \in \{1, \dots, N-1\}$ by
\begin{equation} \label{eq:transback_finite}
\begin{split} 
\transback_{ij}(n)
&:= \prob(\back{X}_{N-1-n+1} = j \,|\, \back{X}_{N-1-n} = i) \\
&= \prob(X_{n-1} = j \,|\, X_{n} = i) = \frac{\dist{n-1}{j}}{\dist{n}{i}} \, \trans_{ji}(n-1)
\end{split}
\end{equation}
whenever $ \dist{i}{n} >0$.
From the backward transition probabilities \eqref{eq:transback_finite}, we note that even the time-reversed process of a finite-time, time-homogeneous Markov chain (i.e., $P(n)=P$ for all $n$) is in general a finite-time, time-inhomogeneous Markov chain, unless also the distribution $\dist{n}{}$ is time-independent.

\subsection{Committor Probabilities}\label{sec:com_finite}
The forward \eqref{eq:q_f_def} and backward committors \eqref{eq:q_b_def} keep their dependence on the time of the chain $n \in \{ 0, \dots, N-1\}$. The following theorem provides us with two iterative equations for the forward and backward committor. 
Because one can solve \eqref{eq:q_f_finite} and~\eqref{eq:q_b_finite} iteratively, the solutions  exist and are unique.
\begin{Thm} \label{Thm:q_fb_finite}
The forward committor for a finite-time Markov chain of the form \ref{ass:finite_chain}  satisfies the following iterative system of equations for $n \in \{0, \dots, N-2\}$:
\begin{equation} \label{eq:q_f_finite}
\left\{ \begin{array}{rcll}
q_i^+(n) &=&  
\sum\limits_{j \in \St} \, \trans_{ij}(n) \, q_j^+(n+1)    & i \in C \\
q_i^+(n) &=& 0 & i \in A \\
q_i^+(n) &=& 1 & i \in B
\end{array}\right.
\end{equation}
with final condition $q^+_i(N-1) = \1_B(i)$.
Analogously, the backward committor for a finite-time Markov chain  satisfies for ${n \in \{1, \dots, N-1\}}$
\begin{equation} \label{eq:q_b_finite}
\left\{\begin{array}{rcll}
q_i^-(n) &=&  
\sum\limits_{j \in \St} \, \transback_{ij}(n) \, q_j^-(n-1) &   i \in C\\
q_i^-(n) &=& 1 & i \in A \\
q_i^-(n) &=& 0 & i \in B 
\end{array}\right.
\end{equation}
with initial condition $q_i^-(0) = \1_A(i)$.
\end{Thm}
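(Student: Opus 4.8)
The plan is to establish the two iterative systems essentially by conditioning on the next (respectively previous) state of the chain, exactly as in the proof of Theorem~\ref{Thm:q_equ}, but now carefully tracking the finite time horizon. First I would treat the boundary-in-space conditions: for $i\in A$ we have $\tau_A^+(n)=n \le \tau_B^+(n)$ with the inequality strict unless $i\in B$ as well, which is excluded since $A\cap B=\emptyset$; hence $q_i^+(n)=0$. Symmetrically $q_i^+(n)=1$ for $i\in B$ because then $\tau_B^+(n)=n<\tau_A^+(n)$. The same argument with the roles of $A$ and $B$ swapped, applied to the time-reversed chain, gives $q_i^-(n)=1$ on $A$ and $q_i^-(n)=0$ on $B$.

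Next I would handle the boundary-in-time conditions. At $n=N-1$ the only index $k\in\T$ with $k\ge n$ is $k=N-1$ itself, so $\tau_S^+(N-1)=N-1$ if $X_{N-1}\in S$ and $=\infty$ otherwise. Therefore $\{\tau_B^+(N-1)<\tau_A^+(N-1)\}$ holds (given $X_{N-1}=i$) exactly when $i\in B$, giving $q_i^+(N-1)=\1_B(i)$; the analogous statement at $n=0$ for the time-reversed chain, using $\tau_S^-(0)$, yields $q_i^-(0)=\1_A(i)$. These also show the stated space-boundary values are consistent with the time-boundary values.

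For the interior recursion, fix $i\in C$ and $n\le N-2$. Starting from \eqref{eq:q_f_def} I condition on $X_{n+1}$ via the law of total probability:
\begin{equation*}
q_i^+(n)=\sum_{j\in\St}\prob(X_{n+1}=j\mid X_n=i)\,\prob(\tau_B^+(n)<\tau_A^+(n)\mid X_{n+1}=j,X_n=i).
\end{equation*}
Since $i\in C$, neither $A$ nor $B$ is hit at time $n$, so on the event $\{X_n=i\}$ one has $\tau_A^+(n)=\tau_A^+(n+1)$ and $\tau_B^+(n)=\tau_B^+(n+1)$ (the infimum over $k\ge n$ with $X_k$ in the set equals the infimum over $k\ge n+1$); crucially $n+1\le N-1$ still lies in $\T$, so these entrance times are well defined. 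Then the Markov property removes the conditioning on $X_n=i$, leaving $\prob(\tau_B^+(n+1)<\tau_A^+(n+1)\mid X_{n+1}=j)=q_j^+(n+1)$, and the sum becomes $\sum_j P_{ij}(n)\,q_j^+(n+1)$, which is \eqref{eq:q_f_finite}. For the backward committor I repeat the argument with the time-reversed chain $\back X$, whose transition matrix is $\transback(n)$ from \eqref{eq:transback_finite}, conditioning on the previous state $X_{n-1}$ (i.e.\ the next state of $\back X$) and using $n-1\ge 0$, to obtain \eqref{eq:q_b_finite}. Existence and uniqueness then follow because each system is an explicit backward (resp.\ forward) recursion in time with prescribed terminal (resp.\ initial) data on all of $\St$. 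The main obstacle is purely bookkeeping: making sure that the shift $\tau_S^+(n)=\tau_S^+(n+1)$ on $C$ is legitimate at the right-hand end of the interval, and that the generalized Markov property for the stopping-time events (cited from \cite{RiberaBorrell2019}) applies verbatim on a finite horizon; both are routine but deserve an explicit sentence.
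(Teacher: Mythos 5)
Your proposal is correct and follows essentially the same route as the paper: the space-boundary values and the terminal/initial conditions are read off directly from the definitions of the entrance/exit times, and the interior recursion is obtained by conditioning on the next (resp.\ previous, via the time-reversed chain) state together with the Markov property, exactly as in the paper's proof, which reuses the argument of Theorem~\ref{Thm:q_fb_periodic}. Your explicit remark on existence and uniqueness via the backward/forward recursion matches the paper's comment preceding the theorem.
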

The proof uses some of the arguments of the proofs above for the stationary, infinite-time and periodic infinite-time cases and can be found in the Appendix \ref{app:proofs}.

The following theorem provides us with an analogue result to Theorem \ref{Lm:q_fb_all_paths} for the forward and the backward committors of a finite-time Markov chain written in terms of path probabilities of the paths that start in $A$ and end in $B$ within the restricted time frame $\T$.
\begin{Lemma} \label{Lm:q_fb_finite_all_paths}
The forward committor at time $n \in \{0, \dots, N-2\}$ and the backward committor at time ${n \in \{1, \dots, N-1\}}$, respectively, satisfy for $i\in C$ the following equalities:
\begin{align} \label{eq:q_f_finite_all_paths}
q_i^+(n) 
&= \sum\limits_{\tau=n+1}^{N-1} \sum_{\substack{
i_\tau \in B \\
i_{n +1}, \dots, i_{\tau-1} \in C
}} \trans_{i i_{n+1}}(n) \cdots \trans_{i_{\tau -1} i_\tau}(\tau -1) \\
q_i^-(n)
&= \sum\limits_{\tau=0}^{n-1} \sum_{\substack{
i_{\tau} \in A \\
i_{\tau + 1}, \dots, i_{n-1} \in C
}} \transback_{i i_{n-1}}(n) \cdots \transback_{i_{\tau +1} i_{\tau}}(\tau +1)\label{eq:q_b_finite_all_paths} .
\end{align}
\end{Lemma}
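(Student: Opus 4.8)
The plan is to prove both identities by unrolling the iterative committor equations from Theorem~\ref{Thm:q_fb_finite}, exactly as was done in the periodic case (Lemma~\ref{Lm:stacked_periodic}), but now terminating at the boundary of the time interval rather than wrapping around periodically. I will focus on the forward committor \eqref{eq:q_f_finite_all_paths}; the backward case \eqref{eq:q_b_finite_all_paths} follows by the same argument applied to the time-reversed chain with transition matrices $\transback(n)$ and with the roles of $A$ and $B$ swapped.

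First I would fix $i\in C$ and a time $n\in\{0,\dots,N-2\}$, and apply \eqref{eq:q_f_finite} once: splitting the sum $\sum_{j\in\St}\trans_{ij}(n)q_j^+(n+1)$ according to whether $j\in B$, $j\in A$, or $j\in C$, and using $q_j^+(n+1)=1$ for $j\in B$ and $q_j^+(n+1)=0$ for $j\in A$, gives
\begin{equation}
q_i^+(n) = \sum_{i_{n+1}\in B}\trans_{ii_{n+1}}(n) + \sum_{i_{n+1}\in C}\trans_{ii_{n+1}}(n)\,q_{i_{n+1}}^+(n+1).\nonumber
\end{equation}
The first term is the $\tau=n+1$ contribution in \eqref{eq:q_f_finite_all_paths}. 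Then I would iterate: apply the same decomposition to $q_{i_{n+1}}^+(n+1)$, and so on. After $k$ steps one has a sum of path probabilities of paths $i\to i_{n+1}\to\cdots\to i_{n+k}$ that hit $B$ for the first time at some $\tau\le n+k$ (staying in $C$ before that), plus a remainder term consisting of length-$k$ paths staying entirely in $C$, weighted by $q_{i_{n+k}}^+(n+k)$. The induction should be packaged cleanly as a statement: for every $k$ with $n+k\le N-1$,
\begin{equation}
q_i^+(n) = \sum_{\tau=n+1}^{n+k}\ \sum_{\substack{i_\tau\in B\\ i_{n+1},\dots,i_{\tau-1}\in C}}\trans_{ii_{n+1}}(n)\cdots\trans_{i_{\tau-1}i_\tau}(\tau-1)\ +\ \sum_{\substack{i_{n+1},\dots,i_{n+k}\in C}}\trans_{ii_{n+1}}(n)\cdots\trans_{i_{n+k-1}i_{n+k}}(n+k-1)\,q_{i_{n+k}}^+(n+k).\nonumber
\end{equation}

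The key step is choosing $k$ so that $n+k=N-1$ and invoking the final condition $q_i^+(N-1)=\1_B(i)$: since the remainder sum ranges over $i_{N-1}\in C$, on which $\1_B$ vanishes, the remainder term dies entirely, leaving exactly \eqref{eq:q_f_finite_all_paths}. For the backward committor, I would run the identical induction on \eqref{eq:q_b_finite} stepping downward in time, unrolling until I reach the initial condition $q_i^-(0)=\1_A(i)$, whose restriction to $i_0\in C$ again vanishes.

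I do not expect a serious obstacle here — the statement is essentially a telescoping of a finite, linear recursion, and because the interval is finite the sums are all finite (no convergence issues as in the infinite-time Lemma~\ref{Lm:q_fb_all_paths}). The only point requiring a little care is bookkeeping the index ranges and the summation constraints, making sure the ``first hitting of $B$'' structure is respected (intermediate nodes forced into $C$) and that the boundary term vanishes because the surviving index lies in $C$; I would state the induction hypothesis explicitly to keep this transparent.
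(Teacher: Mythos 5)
Your proposal is correct and follows essentially the same route the paper indicates: the paper only sketches the proof as a decomposition of $q^+_i(n)$ (resp.\ $q^-_i(n)$) into probabilities of paths reaching $B$ (resp.\ coming from $A$) within the finite window, and your explicit induction unrolling \eqref{eq:q_f_finite} and \eqref{eq:q_b_finite} down to the final condition $q^+_i(N-1)=\1_B(i)$ and initial condition $q^-_i(0)=\1_A(i)$ is exactly the finite-time analogue of Lemma~\ref{Lm:stacked_periodic} that this sketch envisions. The key observations — that the boundary term vanishes because the surviving index lies in $C$, and that all sums are finite so no convergence argument is needed — are handled correctly.
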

The proof follows from rewriting $q_i^+(n)$ for any time $n \in \{0, \dots, N-2\}$ into a decomposition of the probabilities of all possible paths that reach $B$ within the time interval $\{n+1,\dots,N-1\}$ and rewriting $q_i^-(n)$ for any time ${n \in \{1, \dots, N-1\}}$ into a decomposition of the probabilities of all possible paths that came from $A$ within $\{1, \dots, n-1\}$. 
\remark{Similar as in the periodic case, it is possible to extend the approach to  time-dependent sets (i.e., space-time sets) $A(n)$ and $B(n)$. For instance, in order to study transitions that leave a set at a certain time (e.g., at time $n=0)$ and arrive at a certain time (e.g., at $n=N$) we can choose $A(n) = \1_{\{0\}}(n) A$  and~$B(n) = \1_{\{N\}}(n) B$. }\label{rem:space_time_set_finite}

\subsection{Transition Statistics and their Interpretation} \label{sec:stat_finite}
We have seen that the forward and backward committors for a finite-time Markov chain can be computed from the iterative equations \eqref{eq:q_f_finite} and \eqref{eq:q_b_finite} with final respectively initial conditions.  Based on these, we will next introduce the corresponding transition statistics.

The distribution of reactive trajectories (Definition \ref{def_reacdist}) is defined for any time ${n \in \{0, \dots, N-1 \}}$, and by Theorem \ref{thm_reacdist} it is given by
\[
\reacdist_i(n) = q_i^-(n)  \ \dist{i}{n} \ q_i^+ (n) .
\]
Observe that $\reacdist_i(0) = \reacdist_i(N-1) = 0$ because there are no reactive trajectories at these times. 
Thus the distribution of reactive trajectories cannot be normalized at times $0$ and $N-1$.
 As a consequence, the normalized distribution of reactive trajectories  $ \reacdistnorm(n)$ is just defined for times $n \in \{1, \dots, N-2\}$.
 
The current of reactive trajectories is defined for any time $n \in \{0, \dots, N-2\}$, and it is given by (Theorem \ref{thm_reaccurr})
\[ \current_{ij}(n) = q_i^-(n) \  \dist{i}{n}  \ \trans_{ij}(n) \  q^+_j(n+1) .\]
Similarly, the effective current of reactive trajectories $\effcurrent_{ij}(n) :=\max\{ \current_{ij}(n) - \current_{ji}(n),0\}$ is defined only for times $n \in \{0, \dots, N-2\}$.

Also in this case, the current satisfies certain conservation principles: 
\begin{Thm} \label{Thm:conservationlaw_finite}
For a finite-time Markov chain $(X_n)_{0 \leq n \leq N-1}$ satisfying Assumption \ref{ass:finite_chain}, the reactive current flowing   out of the node $i \in C$ at time $n$ equals the current flowing  into a node $i \in C$ at time $n-1$, i.e.,
\begin{equation}
\sum_{j\in\St}  \current_{ij}(n) = \sum_{j\in\St} \current_{ji}(n-1) 
\end{equation}
 for $n \in \{1, \dots, N-2\}$.
Further, the reactive current flowing out of $A$ into $\St$ over the whole time period $\{0,\dots,N-2\}$ equals the flow of reactive trajectories from $\St$ into $B$ over the period
\begin{equation}\label{eq:periodic_current_outin}
\sum\limits_{n=0}^{N-2} \sum_{i\in A, j\in\St} \current_{ij}(n) = \sum\limits_{n=0}^{N-2} \sum_{ i\in\St, j\in B} \current_{ij}(n).
\end{equation} 
\end{Thm}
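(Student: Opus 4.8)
The plan is to deduce both identities from the closed form $\current_{ij}(n) = q_i^-(n)\,\lambda_i(n)\,\trans_{ij}(n)\,q_j^+(n+1)$ of Theorem~\ref{thm_reaccurr}, feeding in the committor equations of Theorem~\ref{Thm:q_fb_finite} and the time-reversal identity $\lambda_j(n-1)\,\trans_{ji}(n-1) = \lambda_i(n)\,\transback_{ij}(n)$ that is read off from~\eqref{eq:transback_finite}.

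First the local law. Fix $i\in C$ and $n\in\{1,\dots,N-2\}$. Using the product form and the forward equation $\sum_{j\in\St}\trans_{ij}(n)\,q_j^+(n+1) = q_i^+(n)$ (applicable since $i\in C$), I get $\sum_{j\in\St}\current_{ij}(n) = q_i^-(n)\,\lambda_i(n)\,q_i^+(n)$. For the other side, when $\lambda_i(n)>0$ I substitute $\lambda_j(n-1)\trans_{ji}(n-1) = \lambda_i(n)\transback_{ij}(n)$ and then apply the backward equation $\sum_{j\in\St}\transback_{ij}(n)\,q_j^-(n-1) = q_i^-(n)$:
\begin{align*}
\sum_{j\in\St}\current_{ji}(n-1) &= \sum_{j\in\St} q_j^-(n-1)\,\lambda_j(n-1)\,\trans_{ji}(n-1)\,q_i^+(n) \\
&= \lambda_i(n)\,q_i^+(n)\sum_{j\in\St}\transback_{ij}(n)\,q_j^-(n-1) = \lambda_i(n)\,q_i^+(n)\,q_i^-(n),
\end{align*}
so both sides agree. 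If $\lambda_i(n)=0$ then $0 = \lambda_i(n) = \sum_{j}\lambda_j(n-1)\trans_{ji}(n-1)$ forces each summand to vanish, so both sides equal $0$; hence the local conservation law holds in every case.

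For the global balance I first note that the reactive current neither leaves $B$ nor enters $A$: $\current_{ij}(n)=0$ whenever $i\in B$ (because $q_i^-(n)=0$) and whenever $j\in A$ (because $q_j^+(n+1)=0$). Splitting $\St = A\cup C\cup B$ in the $i$-index of $\sum_{i,j\in\St}\current_{ij}(n)$ and discarding the vanishing $i\in B$ block, and then splitting in the $j$-index and discarding the vanishing $j\in A$ block, yields for each $n\in\{0,\dots,N-2\}$
\[
\sum_{i\in A,\,j\in\St}\current_{ij}(n) + \sum_{i\in C,\,j\in\St}\current_{ij}(n) = \sum_{i\in\St,\,j\in C}\current_{ij}(n) + \sum_{i\in\St,\,j\in B}\current_{ij}(n).
\]
Summing over $n=0,\dots,N-2$, the claim $F_A=F_B$ reduces to showing that the two ``$C$-sums'' $\sum_{n=0}^{N-2}\sum_{i\in C,\,j\in\St}\current_{ij}(n)$ and $\sum_{n=0}^{N-2}\sum_{i\in\St,\,j\in C}\current_{ij}(n)$ coincide. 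Summing the local law $\sum_{j}\current_{ij}(n) = \sum_{j}\current_{ji}(n-1)$ over $i\in C$ and $n\in\{1,\dots,N-2\}$, shifting the time index, and relabelling the summation variables turns the difference of the two $C$-sums into a telescoped remainder consisting of the boundary terms $\sum_{i\in C,\,j\in\St}\current_{ij}(0)$ and $\sum_{i\in\St,\,j\in C}\current_{ij}(N-2)$. The former is zero because $q_i^-(0)=\1_A(i)=0$ for $i\in C$, and the latter is zero because $q_j^+(N-1)=\1_B(j)=0$ for $j\in C$; therefore the $C$-sums agree and $F_A = F_B$.

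The only mildly delicate points are the degenerate case $\lambda_i(n)=0$ in the local law, which I dispatch by nonnegativity of the mass flow, and keeping the index bookkeeping straight in the telescoping step — in particular, noticing that the local law is available exactly on $n\in\{1,\dots,N-2\}$ so that the leftover terms land precisely at $n=0$ and $n=N-2$, where the initial/final committor conditions annihilate them. Everything else is direct substitution.
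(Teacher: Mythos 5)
Your proposal is correct and follows essentially the same route as the paper's proof: the local law via the product form of $\current$, the forward/backward committor equations and the time-reversal identity, and the global balance via splitting off the vanishing $B$-row and $A$-column blocks and telescoping the local law, with the boundary terms at $n=0$ and $n=N-2$ killed by $q^-(0)=\1_A$ and $q^+(N-1)=\1_B$. Your explicit treatment of the degenerate case $\lambda_i(n)=0$ is a small but welcome addition that the paper leaves implicit.
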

The proof   can be found in  the Appendix \ref{app:proofs}.

The discrete rate of transitions leaving $A$ is defined for times $n \in \{0, \dots, N-2\}$
\[
\rateA(n) = \sum_{i\in A, j\in\St} \current_{ij}(n)
\]
 whereas the  discrete rate of transitions entering $B$ is defined for times ${n \in \{1, \dots, N-1\}}$
\[
\rateB(n) = \sum_{i\in \St, j\in B} \current_{ij}(n-1) .
\]
By plugging the definitions of the rates into result \eqref{eq:periodic_current_outin} of Theorem \ref{Thm:conservationlaw_finite} and by re-indexing the times, we note 
that the discrete departure rate averaged over the times ${n \in \{0, \dots, N-2\}}$ equals the time-averaged discrete arrival rate over the times ${n \in \{1, \dots, N-1\}}$, which we denote by $\bar{k}^{AB}_{N}$, i.e.,
\[ \bar{k}^{AB}_{N} := \frac{1}{N}\sum_{n=0}^{N-2} \rateA(n) = \frac{1}{N}\sum_{n=1}^{N-1} \rateB(n) . \]

In the infinite-time, stationary case, Theorem \ref{thm_traj_average}  tells us that $k^{AB}$ equals the time average of the number of reactive pieces departing per time step along a single infinitely long trajectory.  Here, we cannot apply the ergodic theorem to turn $\bar{k}^{AB}_{N}$ into an average along a single trajectory. Instead,
we can write the time-averaged rate in terms of an ensemble of trajectories to get a better understanding.
For this, we take $K\in\mathbb{N}$ i.i.d.\ realizations of the finite-time chain, i.e., each sample $\smash{ (\hat{X}_n^i)_{n\in \{0,\dots,N-1\}} }$ is distributed according to the law of the finite-time dynamics with $\hat{X}_0^i \sim \lambda(0)$. Then we have by the law of large numbers:

\begin{equation} 
\begin{split}\label{equ:avrate_MC}
\bar{k}^{AB}_{N}
&=\frac{1}{N} \sum\limits_{n=0}^{N-2}  \prob(X_n\in A, \tau_B^+(n+1) < \tau_A^+(n+1)) \\
&=\frac{1}{N} \sum\limits_{n=0}^{N-2}  \E \left[ \1_A(X_n) \1_B\left(X_{\tau^+_{A\cup B}(n+1)}\right)\right] \\
&= \lim_{K\to \infty}  \frac{1}{N} \sum\limits_{n=0}^{N-2}\frac{1}{K} \sum\limits_{i=1}^{K}  \1_A(\hat{X}_n^i) \1_B\left(\hat{X}^i_{\tau^+_{A\cup B}(n+1)}\right).
\end{split}
\end{equation}
Further, we can rewrite the second line of \eqref{equ:avrate_MC} as 
\begin{equation}  
\begin{split}
\bar{k}^{AB}_{N}
&=\frac{1}{N} \sum\limits_{n=0}^{N-2}  \E \left[ \1_A(X_n) \1_B\left(X_{\tau^+_{A\cup B}(n+1)}\right)\right] = \frac{1}{N}  \E \left[\sum\limits_{n=0}^{N-2}  \1_A(X_n) \1_B\left(X_{\tau^+_{A\cup B}(n+1)}\right)\right]
\end{split}
\end{equation}
and thus giving the average rate $\bar{k}^{AB}_{N}$ the interpretation of the total expected  amount of reactive trajectories within $\{0,\dots,N-1\}$ divided by the number of time steps. Analogously, we can apply the same argument for the time-averaged probability of being on a transition
\[
\bar{Z}_N^{AB} := \frac{1}{N} \sum_{n=0}^{N-1} Z^{AB}(n) = \frac{1}{N}\E \left[\sum\limits_{n=0}^{N-1} \1_A\left(X_{\tau^-_{A\cup B}(n)}\right) \1_B\left(X_{\tau^+_{A\cup B}(n)}\right)\right],
\]
which can be understood as the expected number of time steps the Markov chain is on a transition during $\{0, \dots, N-1\}$ divided by $N$.
Last, we define the ratio
\[
\avmeantime := \frac{\bar{Z}_N^{AB}}{\bar{k}_N^{AB}} 
\]
and observe that it provides us with the average expected duration of a reactive trajectory over ${n \in \{0, \dots, N-1\}}$.

\begin{remark} \label{rem:non-ergodic_processes}
In~\cite{von2018statistical}, transitions from $A$ to $B$ are studied in non-ergodic and non-stationary processes. But the focus there is put on the first pieces of the trajectories starting in $B^c$ which arrive in $B$ after having been to $A$ (called the first passage paths), whereas we consider the ensemble of all the transitions from $A$ to $B$ within the time interval $\T$. Further, the first passage paths are divided into nonreactive and reactive segments  and statistics for both,  the reactive and  the non-reactive ensemble,  are computed.  Last, their approach does not have any restrictions on the length of the first passage path and therefore is not straightforwardly extendable to finite-time processes.
\end{remark}
\subsection{Convergence Results of TPT Objects from Finite-time Systems to Infinite-time}\label{sec:convergence_finite}
The aim of this section is to show the consistency of the committors and  TPT objects between the finite-time case and infinite-time case. We will show that when assuming a time-homogeneous chain on a finite-time interval that is stationary, the TPT objects converge to the infinite-time, stationary case when letting the time interval   go to infinity.

To this end, let us consider the Markov chain $(X_n)_{-N \leq n \leq N}$  on the time interval $\{-N, \dots, N\}$ with a time-homogeneous and  irreducible transition matrix $P=(P_{ij})_{i,j \in \St}$. When choosing  the unique, strictly positive invariant density ${\pi = (\pi_i)_{i \in \St}}$ of $P$ as an initial density, the density for all ${n \in \{-N, \dots, N\}}$ is given by 
\begin{equation}\label{eq:stationary_dist}
\prob(X_n = i) = \pi_i .
\end{equation}
The time-reversed process $(\back{X}_n)_{-N \leq n \leq N}$ is also time-homogeneous and stationary, since its transition probabilities are given by
\begin{equation} \label{eq:transback_conv}
\transback_{ij} = \frac{\pi_j}{\pi_i} \, \trans_{ji}.
\end{equation}
The forward committor and the backward committor for a finite-time Markov chain on the time interval $ \{-N, \dots, N\}$ satisfy \eqref{eq:q_f_finite} and \eqref{eq:q_b_finite} with a slight adjustment of the time interval.

The next theorem provides us the desired result:
\begin{Thm} \label{thm:convergence}
The committors and transition statistics defined for an irreducible Markov chain in stationarity on a finite time interval correspond in the limit that the interval $\{-N, \dots, N\}$ tends to $\Z$ to the objects defined for a stationary, infinite-time Markov chain. For any $i, j \in \St$ it holds that
\begin{align*}
\lim_{\substack{n \in \{-N, \dots, N-1\} \\ N \rightarrow \infty}} q_i^+(n) &= q_i^+ , &
\lim_{\substack{n \in \{-N+1, \dots, N\} \\ N \rightarrow \infty}} q_i^-(n) &= q_i^- , &
\lim_{\substack{n \in \{-N+1, \dots, N-1\} \\ N \rightarrow \infty}} \reacdist_i(n) &= \reacdist_i , \\
\lim_{\substack{n \in \{-N+1, \dots, N-2\} \\ N \rightarrow \infty}} \current_{ij}(n) &= \current_{ij} , &
\lim_{\substack{n \in \{-N+1, \dots, N-2\} \\ N \rightarrow \infty}} \rateA(n) &= \rateA , &
\lim_{\substack{n \in \{-N+2, \dots, N-1\} \\ N \rightarrow \infty}} \rateB(n) &= \rateB .
\end{align*}
\end{Thm}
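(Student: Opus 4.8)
The plan is to establish convergence first for the forward committor $q_i^+(n)$, then deduce the backward committor by the time-reversal symmetry already built into the setup, and finally obtain the statistics as continuous (polynomial) functions of the committors together with the stationary density, which is exactly preserved along the finite interval by \eqref{eq:stationary_dist}. Throughout, I would fix a reference time, say $n=0$ (or any $n$ at distance $\to\infty$ from both endpoints $\pm N$), and compare the finite-time committor on $\{-N,\dots,N\}$ with the infinite-time committor characterized in Theorem~\ref{Thm:q_equ} and Lemma~\ref{Lm:q_fb_all_paths}.

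\textbf{Step 1: forward committor.} Using the path-sum representation of Lemma~\ref{Lm:q_fb_finite_all_paths}, for $i\in C$ the finite-time committor $q_i^+(n)$ on $\{-N,\dots,N\}$ equals the total probability of all paths from $i$ that hit $B$ before $A$ \emph{and} do so within the remaining horizon $\{n+1,\dots,N\}$; the infinite-time committor $q_i^+$ from \eqref{eq:q_f_all_paths} is the same sum but without the horizon restriction. Hence $0\le q_i^+ - q_i^+(n)$ equals the probability (started at $i$) that the first hit of $A\cup B$ occurs \emph{after} time $N$ and is a hit of $B$; this is bounded above by $\prob_i(\tau_{A\cup B}^+ > N-n)$, where $\tau_{A\cup B}^+$ is the hitting time for the homogeneous chain $P$. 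Since $P$ is irreducible on a finite state space and $A\cup B\neq\emptyset$, the hitting time of $A\cup B$ has a geometric tail uniformly in the starting point, so $\prob_i(\tau_{A\cup B}^+ > N-n)\le C\rho^{\,N-n}$ for some $\rho\in(0,1)$. As $N\to\infty$ with $n$ in the stated index range (so $N-n\to\infty$), this bound vanishes uniformly in $i$, giving $q_i^+(n)\to q_i^+$. For $i\in A$ or $i\in B$ the equality is exact by the boundary conditions, so the claim holds for all $i\in\St$.

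\textbf{Step 2: backward committor.} By \eqref{eq:stationary_dist} the density is stationary, so the time-reversed finite-time chain has the constant transition matrix $\transback_{ij}=\frac{\pi_j}{\pi_i}P_{ji}$ of \eqref{eq:transback_conv}, which is itself irreducible with the same stationary density and coincides with the time-reversed chain of the infinite-time setting. Therefore Step~1 applies verbatim to $\transback$ and to the system \eqref{eq:q_b_finite}, yielding $q_i^-(n)\to q_i^-$ uniformly in $i$ over the stated index range (the index shift by one reflects that the backward committor is defined from $n\ge -N+1$).

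\textbf{Step 3: statistics.} Given $\prob(X_n=i)=\pi_i$ for all $n$, Theorems~\ref{thm_reacdist} and \ref{thm_reaccurr} give $\reacdist_i(n)=q_i^-(n)\,\pi_i\,q_i^+(n)$ and $\current_{ij}(n)=q_i^-(n)\,\pi_i\,P_{ij}\,q_j^+(n+1)$, which are fixed polynomials in the committor values; by Steps~1--2 and the product/continuity of limits these converge to $q_i^-\pi_i q_i^+=\reacdist_i$ and $q_i^-\pi_i P_{ij}q_j^+=\current_{ij}$ respectively (the index ranges in the theorem are exactly those on which both $q^\pm$ are defined and both converge, with an extra shift for $\current_{ij}(n)$ because it needs $q_j^+(n+1)$). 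Summing $\current_{ij}(n)$ over $i\in A,\ j\in\St$ gives $\rateA(n)\to\sum_{i\in A,j}\current_{ij}=\rateA$, and over $i\in\St,\ j\in B$ (at time $n-1$) gives $\rateB(n)\to\rateB$; these finite sums of convergent terms need no extra argument.

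\textbf{Main obstacle.} The only real content is Step~1: quantifying that the horizon truncation error $q_i^+ - q_i^+(n)$ is controlled by the tail of the hitting time $\tau_{A\cup B}^+$ and that this tail decays geometrically \emph{uniformly} in the starting state. The uniform geometric bound is standard for irreducible finite-state chains (e.g.\ there is $r\in\N$ and $\delta>0$ with $\prob_i(\tau_{A\cup B}^+\le r)\ge\delta$ for all $i$, hence $\prob_i(\tau_{A\cup B}^+>kr)\le(1-\delta)^k$), but it is the step that genuinely uses irreducibility of $P$; everything after it is algebra and continuity. One minor bookkeeping point to get right is matching the index windows in the statement to where each object is defined (the one-step shifts for $q^-$, for $\current_{ij}$, and for $\rateB$), which the path-sum formulation in Lemma~\ref{Lm:q_fb_finite_all_paths} makes transparent.
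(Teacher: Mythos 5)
Your proposal is correct and follows essentially the same route as the paper: both identify the finite-time committor via the path-sum representation of Lemma~\ref{Lm:q_fb_finite_all_paths} as a truncation of the infinite-time series of Lemma~\ref{Lm:q_fb_all_paths}, handle $q^-$ by the stationary time-reversal, and obtain the statistics from Theorems~\ref{thm_reacdist} and \ref{thm_reaccurr} by continuity of products and finite sums. Your uniform geometric tail bound on $\prob_i(\tau_{A\cup B}^+>N-n)$ is a correct quantitative refinement (and matches the exponential decay seen numerically in Example~5), but it is not needed beyond what the paper's direct series argument already gives.
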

\begin{proof}
First, we see that the forward committor at time ${n \in \{-N, \dots, N-1\}}$ and the backward committor at time ${n \in \{-N+1, \dots, N\}}$ for finite-time Markov chains correspond in the limit that $N \rightarrow \infty$ to the forward and the backward committors defined for stationary, infinite-time Markov chains. For any ${n \in \{-N, \dots, N-1\}}$ we can see that
\begin{equation} \label{eq:q_f_lim} 
\begin{split}
\lim_{\substack{n \in \{-N, \dots, N-1\} \\ N \rightarrow \infty}} q_i^+(n)
\overset{(1)}{=}  \sum\limits_{\tau \in \Z^+} \sum_{\substack{
i_1, \dots, i_{\tau -1} \in C \\ 
i_\tau \in B
}} \trans_{i i_1} \dots \trans_{i_{\tau -1} i_\tau} 
& \overset{(2)}{=}  q_i^+ , 
\end{split}
\end{equation}
where (1) follows from applying first Lemma \ref{Lm:q_fb_finite_all_paths} and then re-indexing the coefficients of the transition matrix $P$, and (2) follows directly from Lemma \ref{Lm:q_fb_all_paths}. Analogously we obtain for any $n \in \{-N +1, \dots, N\}$ that
\begin{equation} \label{eq:q_b_lim} 
\begin{split}
\lim\limits_{\substack{n \in \{-N+1, \dots, N\} \\ N \rightarrow \infty}} q_i^-(n)
=  \sum\limits_{\tau \in \Z^-} \sum_{\substack{
i_\tau \in A \\
i_{\tau +1}, \dots, i_{-1} \in C
}} \transback_{i_{\tau +1} i_\tau} \dots \transback_{i i_{-1}} 
= q_i^- .
\end{split}
\end{equation}
Hence, by putting together \eqref{eq:q_f_lim}, \eqref{eq:q_b_lim} and \eqref{eq:stationary_dist} we show that for any $i, j \in \St$
\begin{align}
\lim_{\substack{n \in \{-N+1, \dots, N-1\} \\ N \rightarrow \infty}} \reacdist_i(n) &= q_i^-(n) \, \prob(X_n = i)\, q_i^+(n) = q_i^- \, \pi_i \, q_i^+ = \reacdist_i , \\
\lim_{\substack{n \in \{-N+1, \dots, N-2\} \\ N \rightarrow \infty}} \current_{ij}(n) &= q_i^-(n) \, \prob(X_n = i) \, \trans_{ij} \, q_j^+(n+1) = q_i^- \, \pi_i \, \trans_{ij} \, q_j^+ = \current_{ij} , \\
\lim_{\substack{n \in \{-N+1, \dots, N-2\} \\ N \rightarrow \infty}} \rateA(n) &=  \sum_{\substack{i \in A \\ j \in \St}} \current_{ij}(n) = \sum_{\substack{i \in A \\ j \in \St}} \current_{ij} = \rateA , \\
\lim_{\substack{n \in \{-N+2, \dots, N-1\} \\ N \rightarrow \infty}} \rateB(n) &= \sum_{\substack{i \in \St \\ j \in B}} \current_{ij}(n-1) = \sum_{\substack{i \in \St \\ j \in B}} \current_{ij} = \rateB .
\end{align}
\end{proof}

\subsection{Numerical Example 3: Time-homogeneous, Finite-time Dynamics  on a 5-state Network}
\label{sub:network_finite_stationary}
\begin{figure}
\begin{subfigure}[t]{0.425\textwidth}
\centering
\includegraphics[width=1\textwidth]{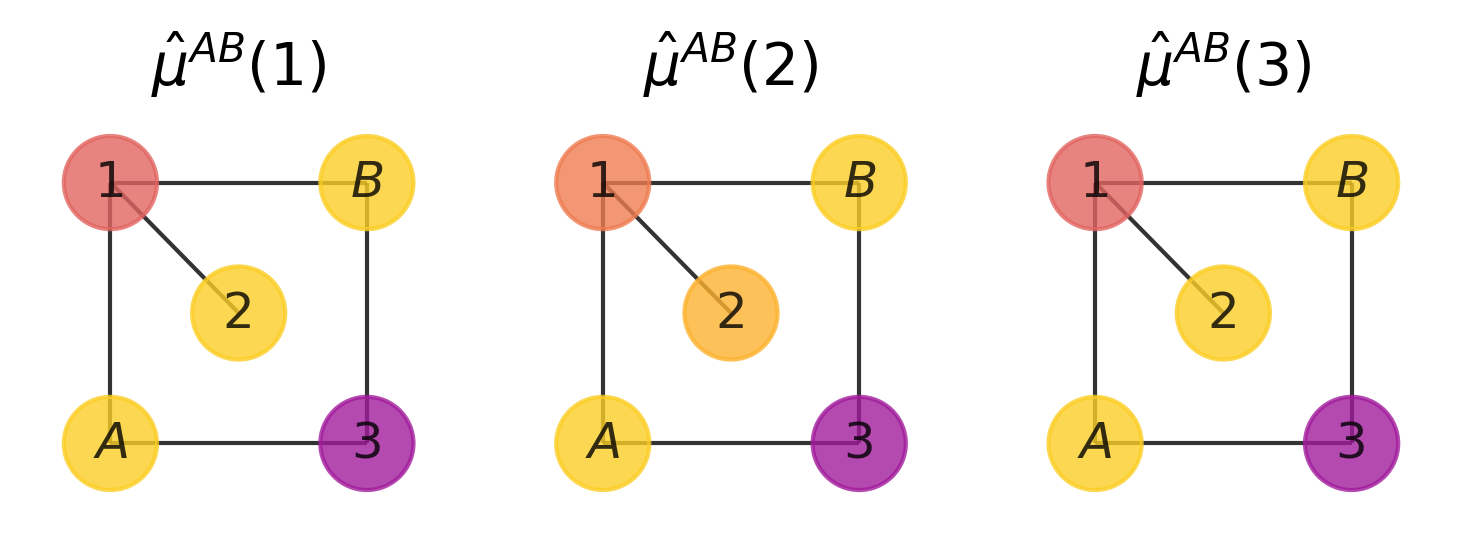}
\caption{}
\end{subfigure}
\begin{subfigure}[t]{0.53\textwidth}
\centering
\includegraphics[width=1\textwidth]{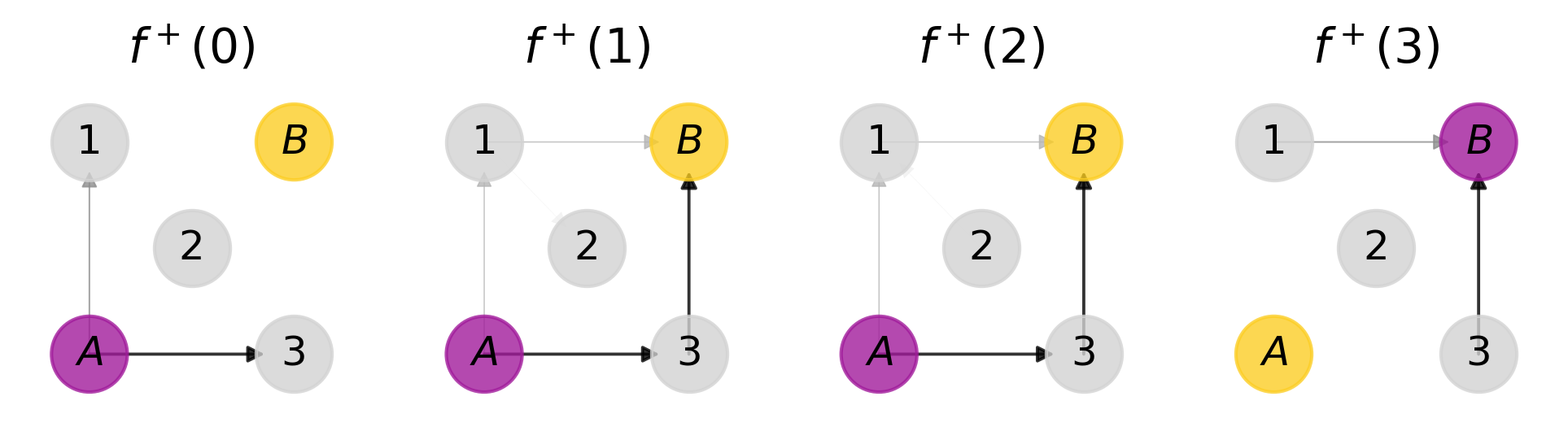}
\caption{}
\end{subfigure}
\begin{subfigure}[t]{0.03\textwidth}
\centering
\includegraphics[width=1\textwidth]{img/small_network_colorbar.png}
\end{subfigure}
\caption{The transition statistics of the finite-time system of \hyperref[sub:network_finite_stationary]{Example 3} are shown for times $n\in \{0,\dots, 4\}$. In  \textbf{(a)} the node color indicates the value of the normalized distribution, ranging from yellow (low values) via orange to purple (high value). In  \textbf{(b)} the strength of the  effective current from one state $i$ to state $j$ is shown by the thickness and greyscale of the arrow pointing from $i$ to $j$, and rate $\rateA_n$  respectively $\rateB_{n+1}$ are shown by the color of the nodes $A$ and $B$ of the effective current plot.
} \label{fig:example_f}
\end{figure}
We consider a time-homogeneous Markov chain over $\T=\{0,1,\dots,N-1\}$, $N=5$ with the transition rules given by $\trans(n) = P$ for all $n \in \{0,1,\dots,N-2\}$. $P$ is the transition matrix used in \hyperref[sub:network_ergodic]{Example 1} and the initial density is given by the stationary density $\pi$ of $P$. Thus we are in the same setting as before in  \hyperref[sub:network_ergodic]{Example 1}, but   we are now looking at transitions between $A$ and $B$ that are restricted to take place within the finite time window.

The transition statistics computed for this example are shown in Figure \ref{fig:example_f}.
Since the time interval was chosen rather small,  transitions via $1$, that usually do a detour to the metastable state $2$, are very unlikely. The transition path via $3$ is more likely, we can see that the effective current through $3$ is much stronger, also the normalized reactive distribution indicates that it is the most likely to be in node $3$ when reactive. We also note that the rate $\bar{k}_5^{AB}=0.005$ is much smaller than in the stationary case, only few transitions from $A$ to $B$ are completed (on average) in the short time frame $\{0,1,..,N-1\}$.

\subsection{Numerical Example 4: Time-inhomogeneous, finite-time dynamics  on a 5-state network}
\label{sub:network_finite_inhom}
\begin{figure}
\centering
\begin{subfigure}[c]{0.3\textwidth}
\centering
\includegraphics[width=1\textwidth]{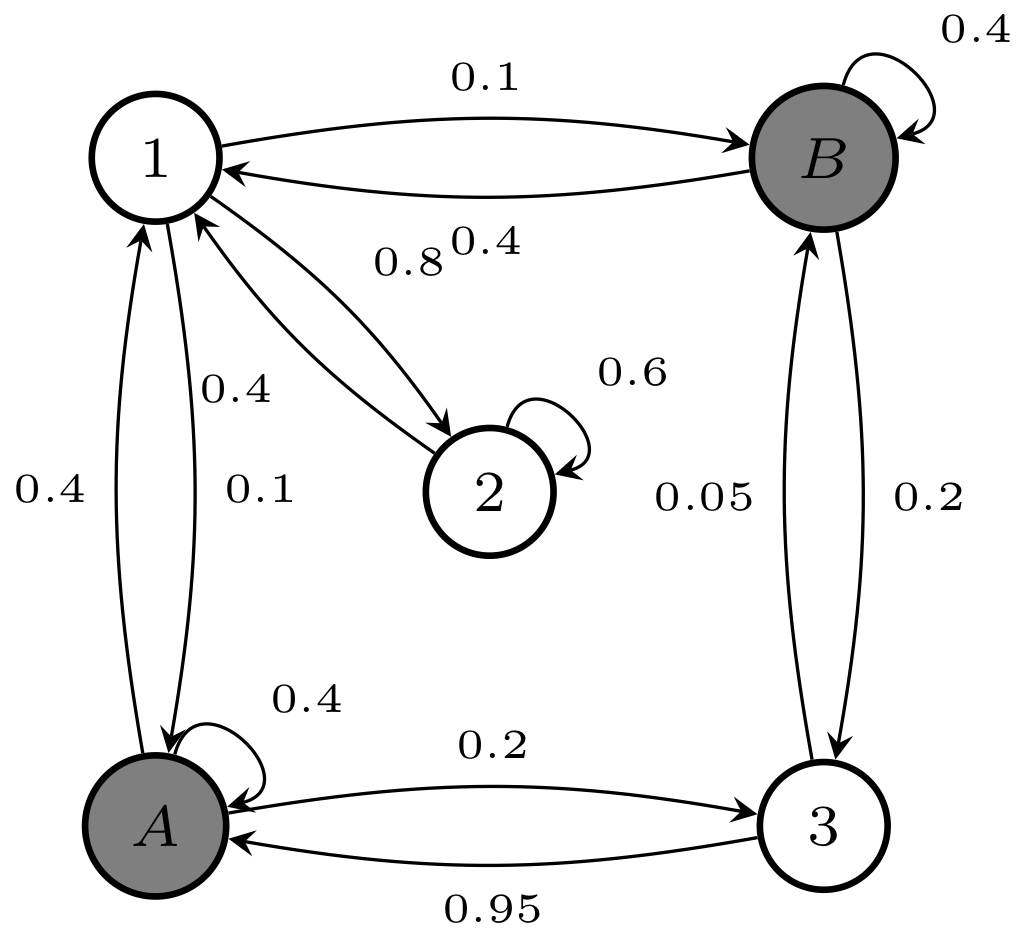}
\caption{}
\end{subfigure}
\begin{subfigure}[c]{0.265\textwidth}
\centering
\includegraphics[width=1\textwidth]{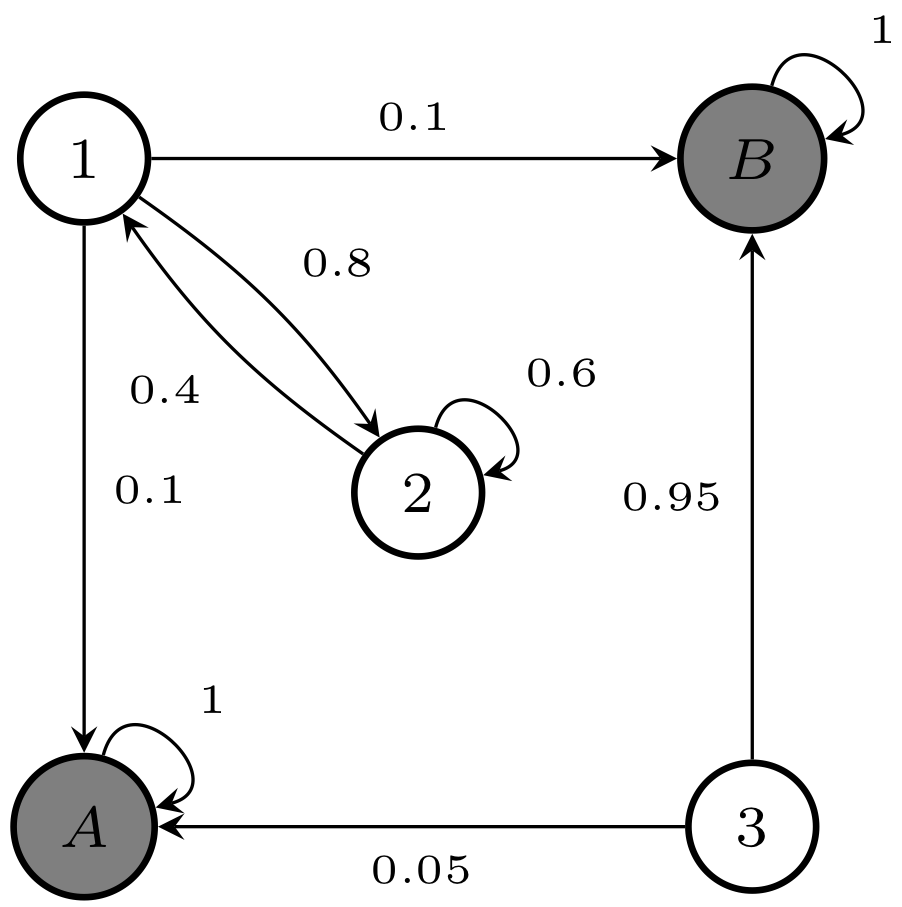}
\caption{}
\end{subfigure}
\caption{The dynamics of the finite-time, time-inhomogeneous system  (\hyperref[sub:network_finite_inhom]{Example 4})  are given by \textbf{(a)} $P(n) = P + K$ at even times, and \textbf{(b)} $P(n) = P - K$ at odd times.  }
\label{fig:example_inhom_trans}
\end{figure}

\begin{figure}
\centering
\begin{subfigure}[t]{0.425\textwidth}
\centering
\includegraphics[width=1\textwidth]{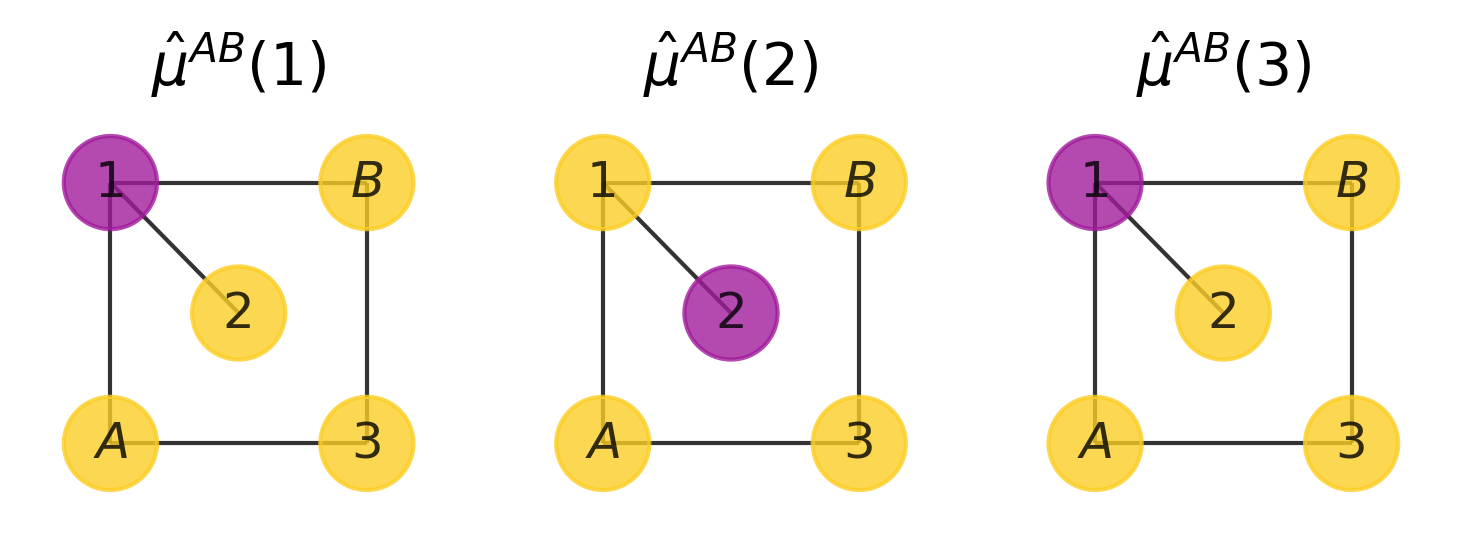}
\caption{}
\end{subfigure}
\begin{subfigure}[t]{0.53\textwidth}
\centering
\includegraphics[width=1\textwidth]{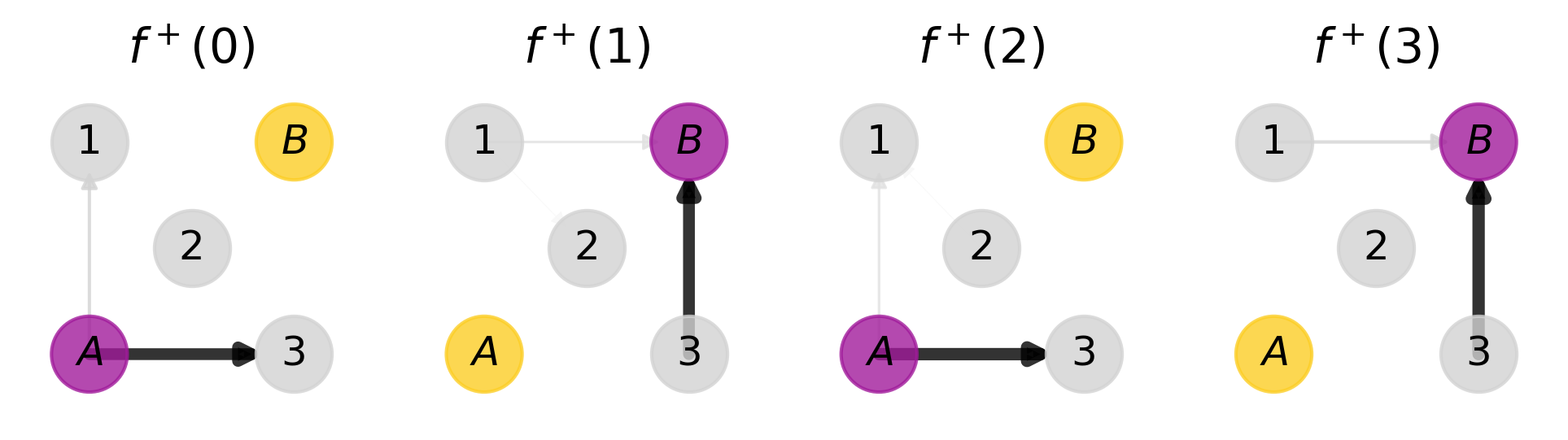}
\caption{}
\end{subfigure}
\begin{subfigure}[t]{0.03\textwidth}
\centering
\includegraphics[width=1\textwidth]{img/small_network_colorbar.png}
\end{subfigure}
\caption{  Here we show \textbf{(a)} the normalized distribution and  \textbf{(b)} the current of transitions from $A$ to $B$ for the finite-time, time-inhomogeneous system  (\hyperref[sub:network_finite_inhom]{Example 4}).}
\label{fig:example_inhom}
\end{figure}
As a next example, we consider  a time-inhomogeneous chain over the finite time window $\T= \{0, \dots, N-1\}$, $N=5$. We again set the stationary density of $P$ as an initial density and let the transition matrices depend on the time but in such a way that $\frac{1}{N-1}\sum_n P(n) = P$. For any $n \in \{0, \dots, N-2\}$ let
\[
P(n) = 
\left\{\begin{array}{ll} 
P + K & \text{if} \quad n \; (\text{mod} \; 2) = 0 \\
P - K & \text{if} \quad n \; (\text{mod} \; 2) = 1 
\end{array}\right.
\]
where $K$ is the 0-rowsum matrix as given in  Figure \ref{fig:example_inhom_trans}.
At times $n = 0, 2$ the transition matrices become $P + K$ and the subsets $A$ and $B$ are less metastable. At times $n=1, 3$ the transition matrices become $P - K$ and $A$ and $B$ are more metastable.

The computed transition statistics are shown in Figure \ref{fig:example_inhom}. The effective current plot shows that the majority of reactive trajectories are leaving $A$ at times $n=0, 2$ and go to $3$ and at times $n=1, 3$ they move from $3$ to $B$. The time-averaged rate for the finite-time, time-homogeneous case from \hyperref[sub:network_finite_stationary]{Example 3} is $\bar{k}_{5}^{AB} = 0.005$ while the time-averaged rate for this time-inhomogeneous case is~$\bar{k}_{5}^{AB} =0.012$, more than two times larger. We thus demonstrated that by adding a forcing to the time-homogeneous dynamics that changes the metastability of $A$ and $B$, we can increase the rate of transitions,  even though the forcing vanishes on average. 
We note that utilizing perturbations that tip a system out of equilibrium is used in statistical mechanics to accelerate convergence of statistics~\cite{ hamelberg2004accelerated,LePa13}.

\subsection{Numerical Example 5: Growing Finite-time Window for the 5-state Network}
\label{sub:network_conv}
Let us consider the stationary Markov chain introduced in \hyperref[sub:network_finite_stationary]{Example 3} but on the time interval $\T = \{-N, \dots, N\}$. We want to see numerically that the forward respectively backward committor at time $n$ converges under the $l^2$-norm to the forward respectively backward committor of the infinite-time, stationary system by extending the time interval ${\T = \{-N, \dots, N\}}$ for $N$ big enough such that $N \pm n \gg 1$, i.e., 
\begin{align*}
\lim_{\substack{n \in \{-N, \dots, N-1\} \\ N \pm n\gg 1}} ||q^+ - q^+(n)||_2  \approx 0, \ &
\lim_{\substack{n \in \{-N +1, \dots, N\} \\ N \pm n \gg 1}} ||q^- - q^-(n)||_2  \approx 0.
\end{align*}
In Figure \ref{fig:conv_finite} we show this convergence numerically, note that the statistics will not necessary converge near the boundary of the interval.
\begin{SCfigure}[1][!h]
\centering
\includegraphics[width=0.5\textwidth]{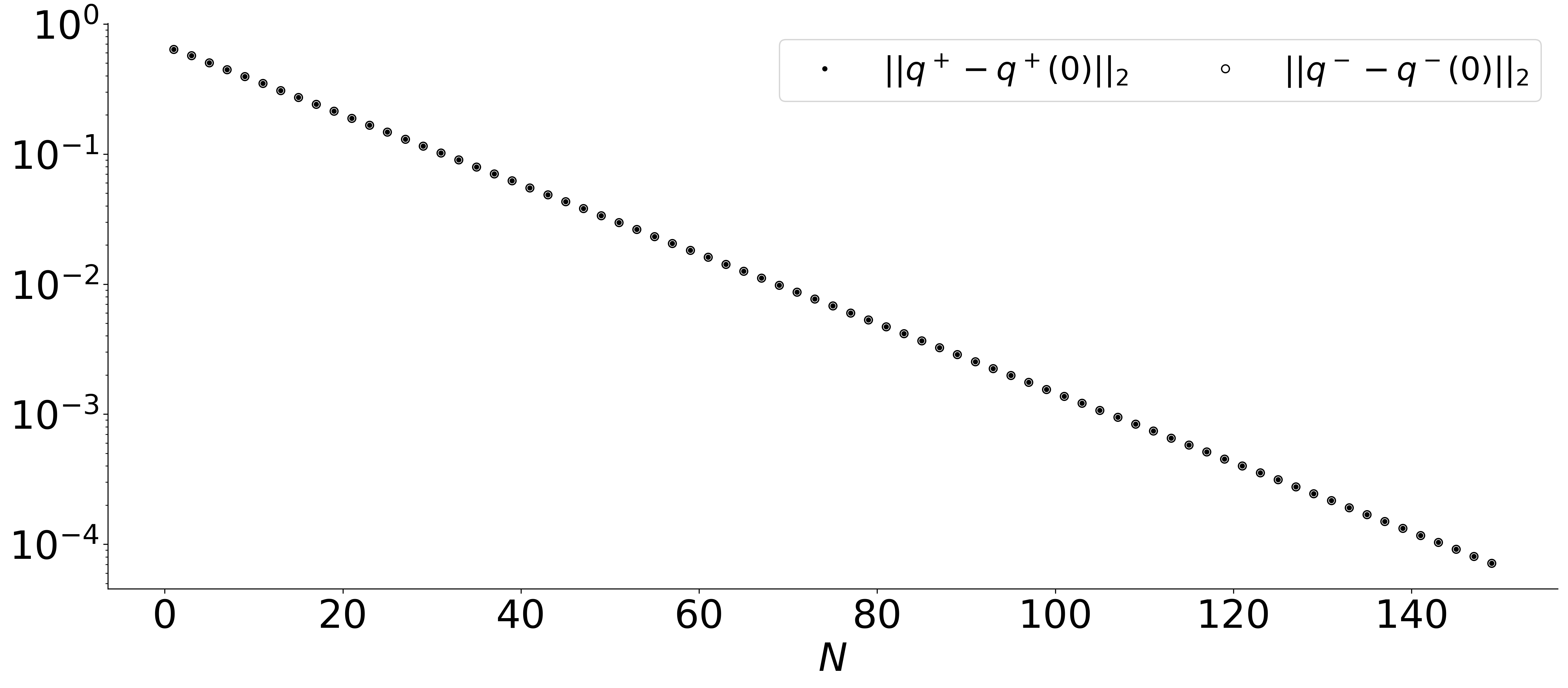}
\caption{The committors at time ${n=0}$ of the finite-time chain with stationary dynamics (\hyperref[sub:network_conv]{Example 5}) converge under the $l^2$-norm to the committors of the infinite time chain (\hyperref[sub:network_ergodic]{Example 1}). The $l_2$-errors decrease at an exponential rate.}

\label{fig:conv_finite}
\end{SCfigure}

\section{Numerical Examples in a Triplewell Landscape}
\label{sec:triplewell_ex}
In this section, we exemplarily study the transition behaviour of a particle diffusing in a triple well energy landscape for several scenarios: for infinite-time, stationary dynamics (Section~\ref{sub:triplewell_ex_inf}), for infinite-time dynamics with an added periodic forcing (Section~\ref{sub:triplewell_ex_periodic}), and for stationary, finite-time dynamics (Sections~\ref{sub:triplewell_ex_finite_hom}, \ref{sub:well_ex_bif}).
 
\subsection{Numerical Example 6: Infinite-time, stationary dynamics}
\label{sub:triplewell_ex_inf}
\begin{figure}
\centering
\begin{subfigure}[c]{0.29\textwidth}
\includegraphics[width=1\textwidth]{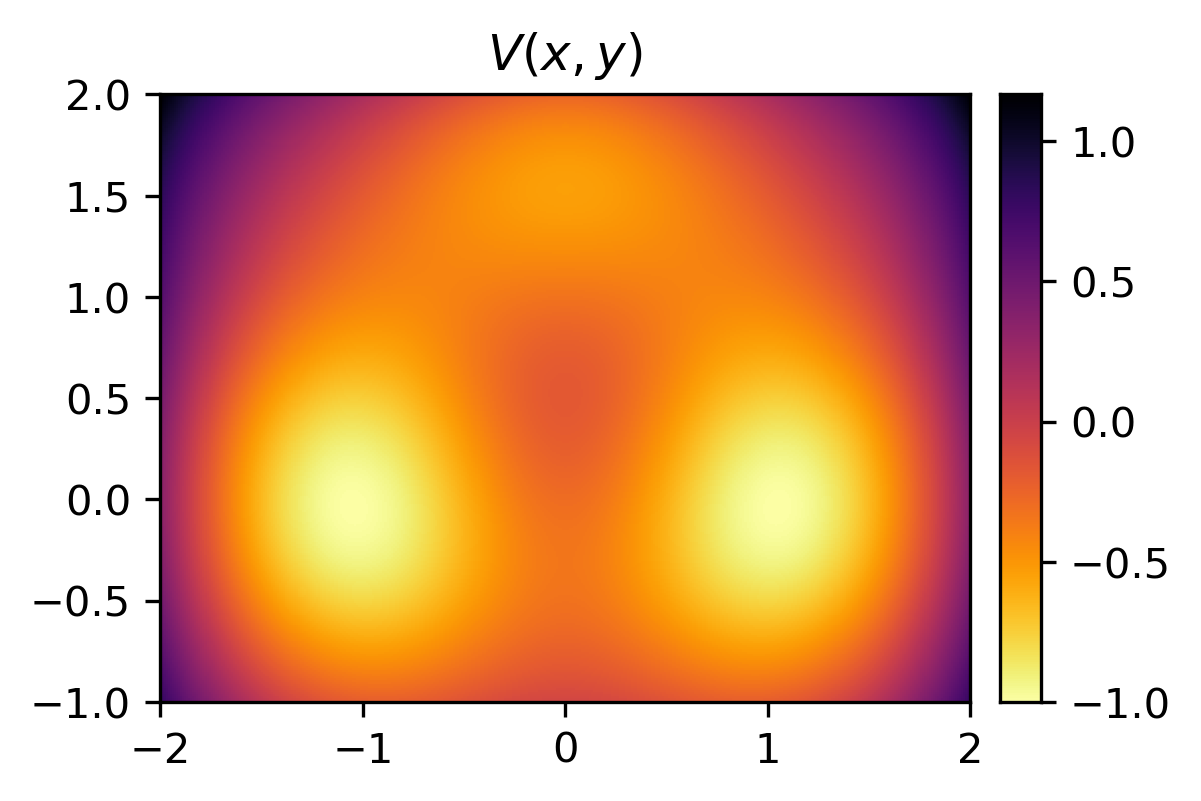}
\caption{}
\end{subfigure}
\begin{subfigure}[c]{0.7\textwidth}
\centering
\includegraphics[width=1\textwidth]{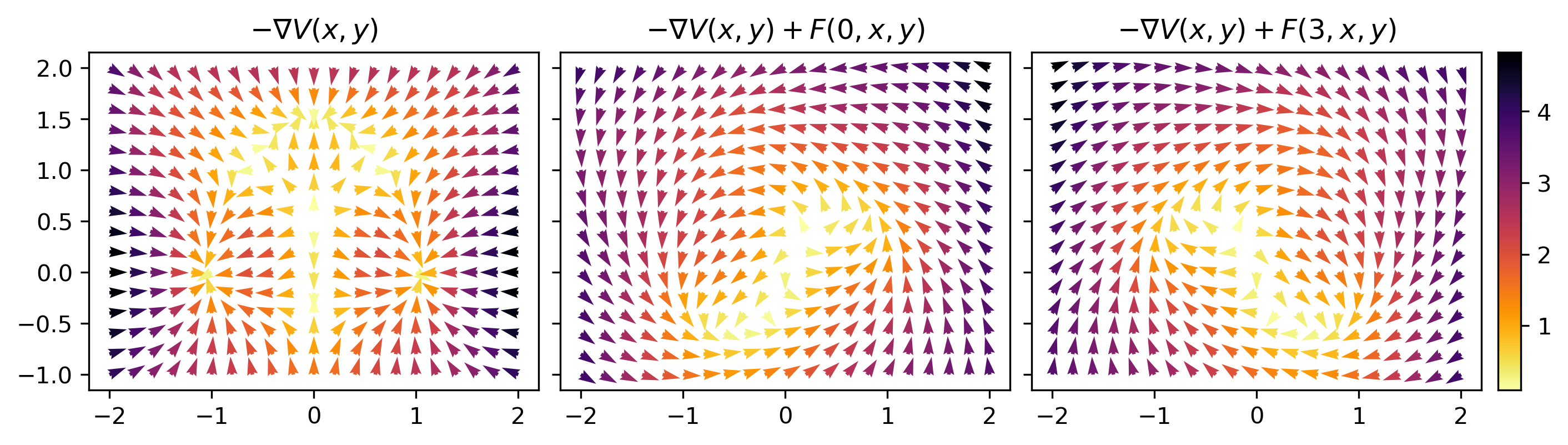}
\caption{}
\end{subfigure}
\caption{\textbf{(a)} Triplewell landscape  $V(x,y)$ and the  resulting force   when unperturbed and   when periodically perturbed. \textbf{(b)} The negative gradient $-\nabla V(x,y)$ is the force that each particle in \hyperref[sub:triplewell_ex_inf]{Example 6} without pertubation feels, while in \hyperref[sub:triplewell_ex_periodic]{Example 7} a periodically varying circulative force is added resulting at times   $m=0$ and   $m=3$ in the shown forces.}
\label{fig:potential}
\end{figure}

\begin{figure}
\centering
\begin{subfigure}[c]{0.245\textwidth}
\includegraphics[width=1\textwidth]{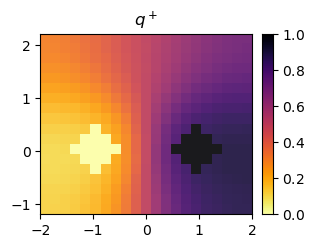}
\end{subfigure}
\begin{subfigure}[c]{0.245\textwidth}
\includegraphics[width=1\textwidth]{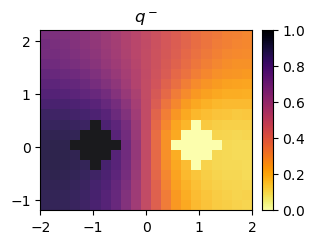}
\end{subfigure}
\begin{subfigure}[c]{0.245\textwidth}
\includegraphics[width=1\textwidth]{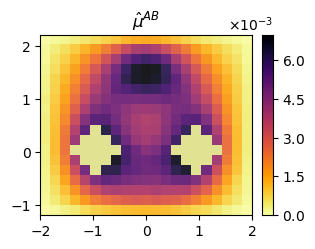}
\end{subfigure}
\begin{subfigure}[c]{0.235\textwidth}
\includegraphics[width=1\textwidth]{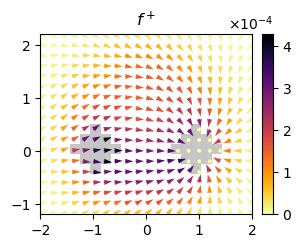}
\end{subfigure}
\caption{TPT objects for stationary, infinite-time dynamics in the triple well landscape as explained in \hyperref[sub:triplewell_ex_inf]{Example 6}. The last plot shows the accumulated $\effcurrent$ for each state $i$ (see the footnote).} \label{fig:well_example}
\end{figure}

We consider the diffusive motion of a particle $(X_t)_{t\in\R}$ in $\R^2$ according to the overdamped Langevin equation
    \begin{equation}\label{eq:langevin}
        dX_t = - \nabla V(X_t) dt + \sigma dW_t
    \end{equation} 
    where $\sigma>0$ is the diffusion constant, $W_t$ is a $2-$dimensional standard Brownian motion (Wiener process) and $V(x,y): \R^2 \rightarrow \R$ is the triple well potential    given in~\cite{metzner2009transition, schutte2013metastability},
    \begin{equation}
       \begin{split} V(x,y) = & \frac{3}{4} \exp\left( -x^2-\left(y-\frac{1}{3}\right)^2\right) - \frac{3}{4} \exp\left(  -x^2-\left(  y-\frac{5}{3}\right)^2\right) \\
       &-\frac{5}{4} \exp\left(  -\left(  x-1\right)^2 - y^2\right) - \frac{5}{4} \exp\left(  -\left(  x+1\right)^2-y^2\right) + \frac{1}{20} x^4 + \frac{1}{20} \left(  y-\frac{1}{3}\right)^4
       \end{split}
    \end{equation} and shown in Figure~\ref{fig:potential}. The particle is pushed by the force $-\nabla V(x,y)$ ``downhill'' in the energy landscape, while simultaneously being subject to a random forcing due to the Brownian motion term.
    The stationary density of the process is given by the Boltzmann distribution $$\pi(x,y) = Z^{-1} \exp(-2\sigma^{-2} V(x,y))$$
    with normalization factor $Z$.
    
    Before applying  TPT on this example, we have to discretize the process into a Markov chain. 
    We want to estimate a transition  matrix that gives us the probability to jump between discrete cells of the state space $[-2,2]\times [-1,2]$, here we choose regular square grid cells $\{A_i,i=1,...\}$ of size $0.2 \times 0.2$. By means of Ulams method (\cite{ulam1960collection}, see also~\cite[Chapter 2.3]{koltai2011efficient} for a summary of the method)
    one can project the dynamics of~\eqref{eq:langevin} onto the space spanned by indicator functions on the grid cells, and then further approximate the projected dynamics by a Monte-Carlo sum of sampled trajectory snippets
      \begin{equation}
        \begin{split}
            \trans_{ij} & \approx \frac{1}{C} \sum_{c=1}^C \1_{A_j}(\hat{Y}^c),
        \end{split}
    \end{equation}
    where we sample $\hat{X}^c$, $c=1,\dots,C$, uniformly from the cell $A_i$ and we get $\hat{Y}^c$  by evolving the sample forward in time according to~\eqref{eq:langevin} with time step $\tau$ (e.g., by using the Euler--Maruyama discretization of the stochastic differential equation~\eqref{eq:langevin}). The resulting process defined by the transition matrix  $\trans$ is a discrete-time, discrete-space Markov chain with time steps $\tau$.\footnote{As an alternative for estimating the transition matrix for an overdamped Langevin process (such as~\eqref{eq:langevin}), one can use the square root approximation   to get a cheap estimate of the rate matrix~\cite{lie2013square,heida2018convergences} and  thereof get the transition matrix.}
    
    We are now interested in the transition behaviour between the deep wells of $V$ when the dynamics are stationary. Therefore, we choose sets $A$ and $B$ as centered at  $(-1,0)$ and $(1,0)$ respectively   and we ask: which transition path from $A$ to $B$ is more likely; via the third metastable shallow well at $(0,1.5)$, or via the direct barrier between $A$ and $B$?
    
    The computed committor functions and statistics of the reactive trajectories are shown in Figure~\ref{fig:well_example}, we chose $\sigma = 1$, $\tau = 0.3$ and sampled $C=10,000$ for estimating the transition matrix. Since the dynamics are reversible, the backward committor is just $1-q^+$. Also, we can see that the committors are close to constant inside the metastable sets (the wells) due to the fast mixing inside the wells, but vary across the barriers.
    The computed effective current\footnote{Regarding plotting $\effcurrent$: If the underlying process is a diffusion process in $\R^d$, we can estimate for each $i$ the vector of the average direction of the effective current and the amount, i.e., to each $i$ at time $n$ we can attach the vector $\sum_{j\neq i} \effcurrent_{ij} (n) v_{ij}$, where $v_{ij}$ is the unit vector pointing from the center of the grid cell $i$ to the center of the grid cell $j$ (see, e.g., Figure~\ref{fig:well_example}).} indicates that most transitions occur via the direct barrier between $A$ and $B$. The effective current through the shallow well at $(0,1.5)$ is only very small, but due to the metastability of the well, reactive trajectories taking that path are stuck there for long and  thus contributing a lot to the density $\reacdist$. The rate of transitions is $\rate = 0.0142$ and the mean transition time is $\meantime = 10.01$.

\subsection{Numerical Example 7: Periodic, Infinite-time Dynamics}
\label{sub:triplewell_ex_periodic}

\begin{sidewaysfigure}
\begin{subfigure}{1\hsize}\centering
    \includegraphics[width=0.9\hsize]{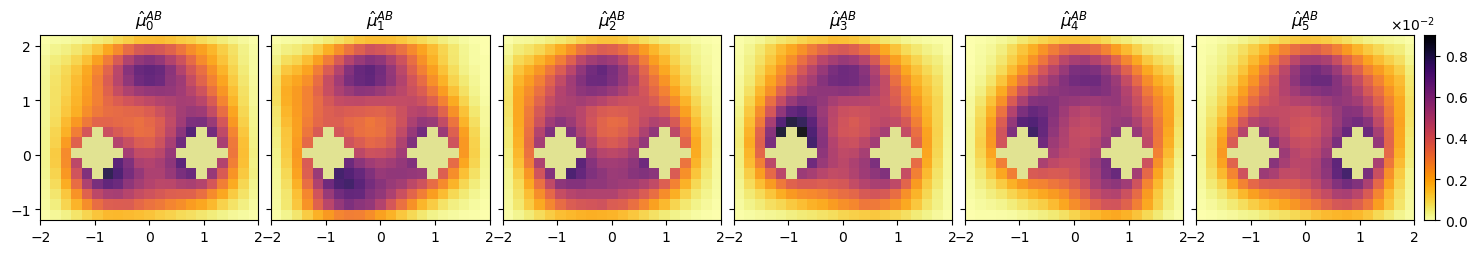}
\caption{}
\end{subfigure}%

\begin{subfigure}{1\hsize}\centering
    \includegraphics[width=0.9\hsize]{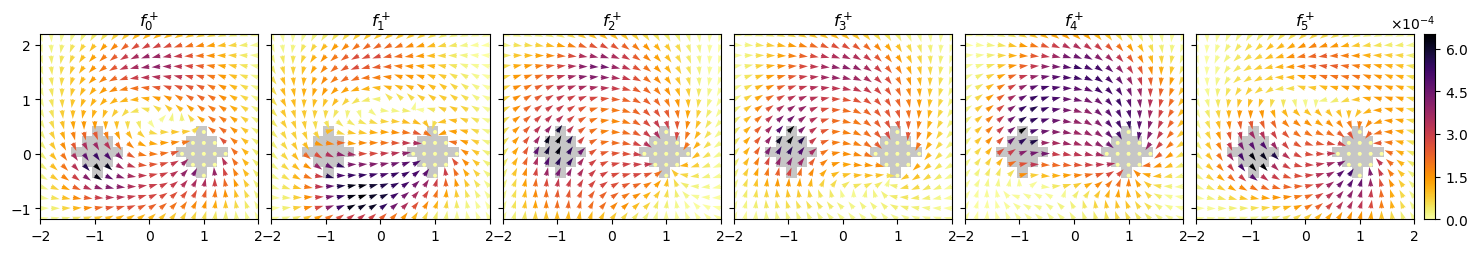}
\caption{}
\end{subfigure}

\begin{subfigure}{1\hsize}\centering
 \includegraphics[width=0.9\hsize]{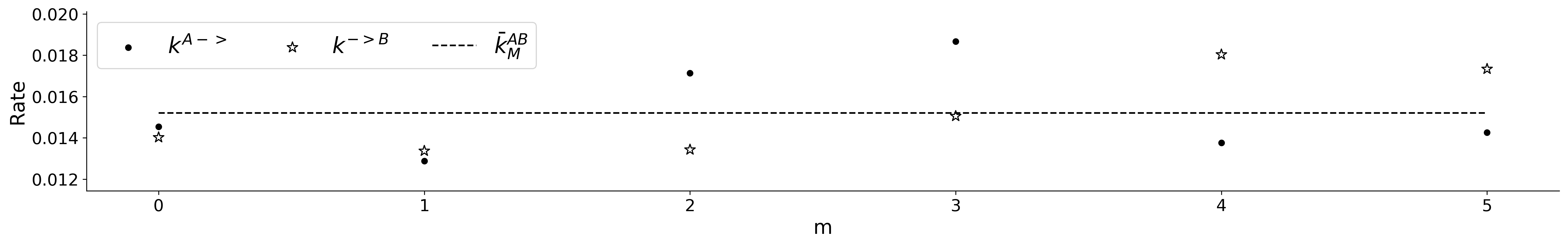}
\caption{}
\end{subfigure}

\caption{ \textbf{(a)} The density and \textbf{(b)}  effective current of reactive trajectories for the triple well dynamics with periodic circulative forcing that is sometimes clockwise, sometimes anti-clockwise, see \hyperref[sub:triplewell_ex_periodic]{Example 7}.  \textbf{(c)}  The rates $\rateA_m$ and $\rateB_m$ indicate the probability of a reactive trajectory to leave $A$ respectively enter  $B$ per time step. } 
\label{fig:well_example_p}
\end{sidewaysfigure}

Next, we are interested in studying transitions  in the triplewell landscape when a periodically varying forcing is applied. We add the force $F(x_1,x_2, t) = 1.4 \, \cos\left(\frac{2 \pi t}{ 1.8}\right) (-x_2, x_1)$, that  alternatingly due to the cosine modulation exhibits an anti-clockwise circulation   and a  clockwise circulation, to the dynamics~\eqref{eq:langevin}, resulting in the diffusion process with $1.8$-periodic forcing,
\begin{equation}
dX_t =( - \nabla V(X_t) + F(X_t, t) ) dt + \sigma dW_t.
\end{equation}   
We again discretize the dynamics and estimate transition matrices $P_0, P_1, \dots,P_{M-1}$ ($M=6$) for $\tau$-spaced time points during the period, each transition matrix is mapping $\tau=0.3$  into the future.
In Figure~\ref{fig:potential} the force from the potential plus circulation is shown for time points $m=0$ and $m=3$.

Considering the same $A$ and $B$ as before, we are now interested in the transition channels when the dynamics are equilibrated to the periodic forcing. 
The computed results are shown in Figure~\ref{fig:well_example_p}, the reactive trajectories take the lower channel via the direct barrier at the beginning of the period  due to the additional push from the forcing, and the upper channel via the shallow well towards the end of the period when the applied circulation is clockwise. It is interesting to note, that the rate of reactive trajectories leaving $A$ and entering $B$ is highest towards the end of the period, when the added forcing is clockwise and the preferred channel is the upper channel. This is contrary to the stationary example before, where reactive trajectories dominantly passed the lower channel.

\subsection{Numerical Example 8: Finite-time, Time-homogeneous Dynamics}
\label{sub:triplewell_ex_finite_hom}
\begin{figure}
\begin{subfigure}[c]{1\textwidth}
\centering
\includegraphics[width=0.8\textwidth]{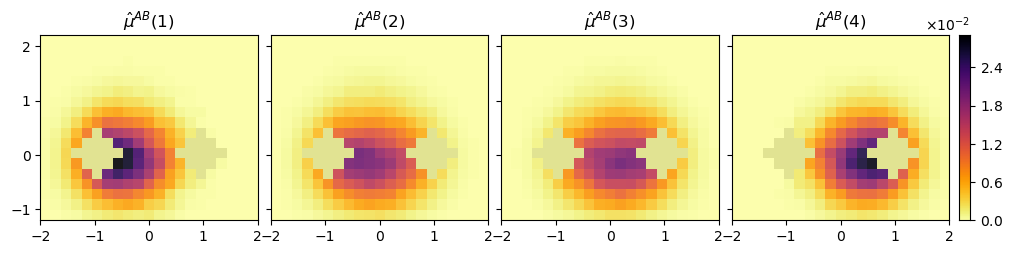}
\caption{}
\end{subfigure}
\begin{subfigure}[c]{1\textwidth}
\centering
\includegraphics[width=1\textwidth]{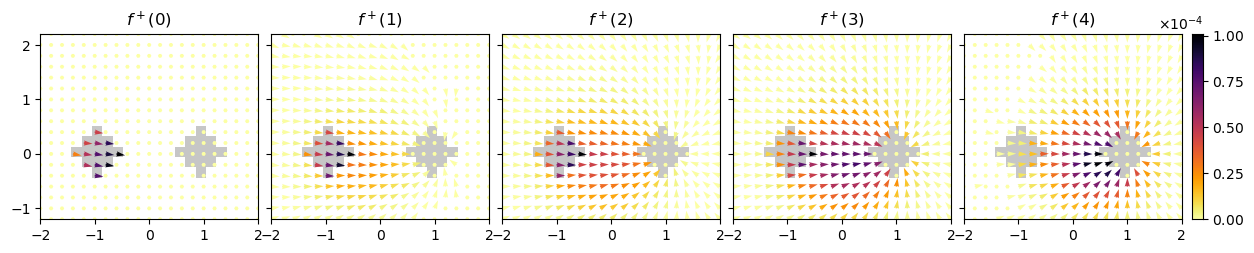}
\caption{}
\end{subfigure}
\caption{ \textbf{(a)} The density and  \textbf{(b)} effective current of reactive trajectories for the stationary triple well dynamics over a finite-time window $\{0,\dots, 5\}$, see \hyperref[sub:triplewell_ex_finite_hom]{Example 8}. Note that some quantities are not defined for all $n\in \{0,\dots,N-1\}$.} \label{fig:well_example_f}
\end{figure}

To demonstrate the effect of the finite-time restriction on the transition behaviour between $A$ and $B$, we now study the time-homogeneous triplewell dynamics restricted to the time window~$\T = \{0,\dots, N-1\}$, $N=6$,  and initiated in the stationary density.

Even though we study the same underlying dynamics as in the stationary, infinite-time case (\hyperref[sub:triplewell_ex_inf]{Example 6}), the possible transition paths between $A$ and $B$ are limited to the  pathway that is fast to traverse, i.e., the lower channel via the direct barrier, see Figure~\ref{fig:well_example_f}. Since only a small portion of the reactive trajectories from the infinite-time example \hyperref[sub:triplewell_ex_inf]{6} has a short enough transition time to be considered in this case, the average rate of transitions $\bar{k}^{AB}_6= 0.0017$ is much lower than the corresponding rate $\rate = 0.0142$ in the infinite time case (\hyperref[sub:triplewell_ex_inf]{Example 6}), and the average time a transition takes $\bar{t}^{AB}_6= 2.055$ is much shorter than in the infinite-time case.
 
\subsection{Numerical Example 9: Bifurcation Studies using Finite-time TPT}
\label{sub:well_ex_bif}
\begin{figure}
\centering
\begin{subfigure}[c]{1\textwidth}
\includegraphics[width=1\textwidth]{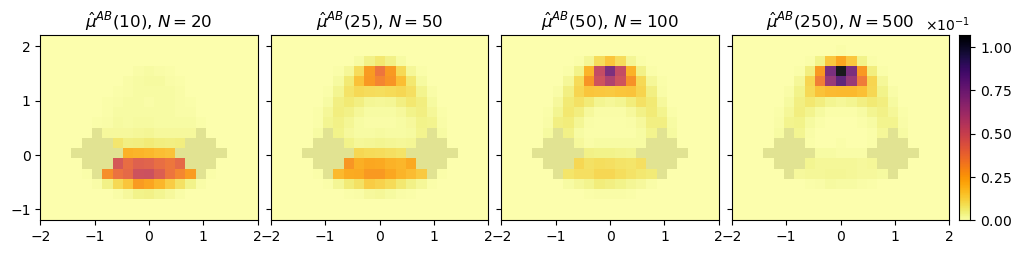}
\caption{}
\end{subfigure}

\begin{subfigure}[c]{1\textwidth}
\includegraphics[width=1\textwidth]{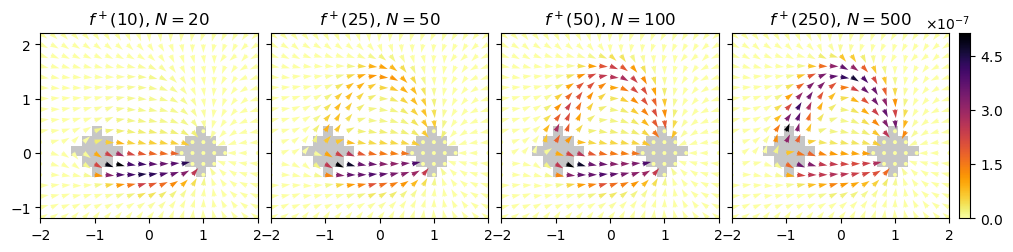}
\caption{}
\end{subfigure}

\caption{Qualitative changes in the transition behaviour for finite-time, stationary TPT and increasing time-interval lengths, see \hyperref[sub:well_ex_bif]{Example 9}. The plots show  \textbf{(a)} the density of reactive trajectories (normalized to being reactive) and  \textbf{(b)} the current at   a time point in the middle of the finite time interval.} \label{fig:well_example_bif}
\end{figure}

Last, we want to highlight the usage of finite-time TPT to study \emph{large} qualitative changes in the transition behavior of a system. We consider the stationary triple well example over a finite interval $\T=\{0,\dots,N-1\}$, but this time with a smaller noise strength of $\sigma=0.26$ compared to the previous examples.

As we increase the interval length $N$ from $N=20$ to $N=500$, we allow reactive trajectories to be longer and longer, and thus the average reactive trajectory changes.  Whereas for $N=20$ most of the density and current is around the lower transition channel, see Figure~\ref{fig:well_example_bif}, for $N=500$ most of the density  and current is around the upper transition channel. The transition behaviour restricted to the time interval of size $N=500$ is already close to   the infinite-time transition dynamics, this can be seen by comparing the case of $N=500$ with Figure~\ref{fig:potential_intro}(b), where the effective current of the same dynamics but in infinite time is depicted.

\section{Conclusion}
In this paper, we generalized Transition Path Theory such that it is applicable not only to infinite-time, stationary Markov processes but also to periodically varying and to time-dependent Markov processes on finite time intervals. We restricted our results to Markov processes on discrete state spaces and in discrete time, but generalizations should be straightforward (e.g., following~\cite{weinan2006towards,metzner2009transition}). 

The \emph{theory} is intended to generalize TPT towards applicability in, e.g., non-equilibrium molecular, climate, fluid, or social dynamics (agent-based models). In most of these applications, the problem of \emph{computing} the TPT objects arises, as the state space can be high- and also infinite dimensional. This is a non-trivial task even in the traditional TPT context stemming from molecular dynamics---where these tools have nevertheless been successfully applied. Resolving the time-dependence poses an additional computational challenge.   All this goes beyond the scope of the present work and will be addressed elsewhere. 

First results towards the application of stationary TPT in high-dimensional state spaces have already been proposed.
In~\cite{thiede2019galerkin} a workaround was given for solving the committor equations in the case of infinite-time, stationary dynamics  by a Galerkin projection which works well   as long as the dynamics of the Markov chain can be described by low-dimensional representations. Another interesting first work~\cite{khoo2019solving} goes into the direction of using  neural networks for solving  the committor equations in high dimensions.

Further, the interpretability and visualization of the transition statistics in high dimensions or for large time intervals (large $N$ or $M$) is a point of future research. In~\cite{metzner2009transition} an algorithm for computing transition channels from the effective current of reactive trajectories was proposed, a generalization for time-dependent dynamics is ongoing work.
 
An implementation of the tools developed in this paper can be found at  \url{www.github.com/LuzieH/pytpt}.

 \paragraph{Acknowledgments}
We would like to thank Alexander Sikorski and Jobst Heitzig for insightful discussions on the theory, as well as Niklas Wulkow for valuable feedback on the paper manuscript.
LH received funding by the Deutsche Forschungsgemeinschaft (DFG, German Research Foundation) under Germany's Excellence Strategy – The Berlin Mathematics
Research Center MATH+ (EXC-2046/1, project ID: 390685689). ERB and PK acknowledge support by the DFG through grant CRC 1114, projects A05 and A01, respectively.

{
\small
\bibliographystyle{myalpha}
\bibliography{references}
}
\appendix      
\section{Appendix}

\subsection{Committors for Periodic, Infinite-time Dynamics: Augmenting the State Space}\label{app:augment_periodic}
As an alternative to the approach in Section \ref{sec:com_periodic}, we can consider the dynamics and the committor equations on the augmented state space $ \St^M$. 
The augmented transition matrix $\trans_\text{Aug}$ of size $M|\St| \times M|\St|$ contains all transition matrices $\trans_0,\dots, \trans_{M-1}$  and applies them consecutively in a row 
    $$ \trans_\text{Aug} =
\begin{pmatrix}
0 &  \trans_0 &  & 0 \\
  &  \ddots & \ddots & \\
 &   &  0&\trans_{M-2}  \\
\trans_{M-1} & &   & 0
\end{pmatrix}
$$ 
 e.g., applying $\trans_\text{Aug}$ to a space-time distribution with mass only at the $0-$th time point shifts all the mass to the next time point. Since the transition matrices are row-stochastic, so is the augmented transition matrix.
 
With that set, we can take a slightly different approach for writing   the forward committor equations  as one augmented matrix equation.
Again, we need to solve the committor only on $C$, therefore we
define the augmented $|C|M \times |C|M $ matrix $\tilde{\trans}_\text{Aug}$, which encodes only the transitions inside $C$
     $$ \tilde{\trans}_\text{Aug} :=
    \begin{pmatrix}
0 & \restr{\trans_0}{C\rightarrow C}&  &  \\
 &  \ddots & \ddots& \\
 &  & 0 & \restr{\trans_{M-2}}{C\rightarrow C} \\
\restr{\trans_{M-1}}{C\rightarrow C} & &  & 0
\end{pmatrix}
$$
and the $|C|M$-dimensional vectors 
\begin{equation*}
\tilde{q}^+ :=
    \begin{pmatrix}
(q^+_{0,i})_{i\in C}\\
(q^+_{1,i})_{i\in C} \\
\vdots \\
(q^+_{M-1,i})_{i\in C}
\end{pmatrix} \text{ and }
  \tilde{b} :=
    \begin{pmatrix}
(\sum_{j\in B} \trans_{0,ij})_{i\in C}\\
(\sum_{j\in B} \trans_{1,ij})_{i\in C} \\
\vdots \\
(\sum_{j\in B} \trans_{M-1,ij})_{i\in C}
\end{pmatrix}. 
\end{equation*}
Then from \eqref{eq:q_f_periodic}, we arrive at the following equation for  $\tilde{q}^+$
\begin{equation}\label{eq:q_f_matrixeq}
\tilde{q}^+  =  \tilde{\trans}_\text{Aug} \, \tilde{q}^++ \tilde{b} \Leftrightarrow   (I - \tilde{\trans}_\text{Aug}) \, \tilde{q}^+= \tilde{b}.
\end{equation}
Note that $(I - \tilde{\trans}_\text{Aug}) $ is large but sparse  compared to using the stacked equations \eqref{eq:q_f_stacked_m}.

For the backward committor equations, we proceed similarly and arrive at the equations
\begin{equation}\label{eq:q_b_matrixeq}
\tilde{q}^-  =  \tilde{\trans}^-_\text{Aug}  \tilde{q}^- + \tilde{a} \Leftrightarrow   (I - \tilde{\trans}^-_\text{Aug}) \tilde{q}^-= \tilde{a} 
\end{equation}
where we defined 
\begin{equation*}
\tilde{\trans}^-_\text{Aug} :=
    \begin{pmatrix}
0 &&  &  \restr{\transback_0}{C\rightarrow C} \\
\restr{\transback_{1}}{C\rightarrow C} &  \ddots & & \\
 & \ddots & 0 &  \\
 & & \restr{\transback_{M-1}}{C\rightarrow C} & 0
\end{pmatrix}, \    
 \tilde{q}^- :=
    \begin{pmatrix}
(q^-_{0,i})_{i\in C}\\
(q^-_{1,i})_{i\in C} \\
\vdots \\
(q^-_{M-1,i})_{i\in C}
\end{pmatrix} \text{ and }
\tilde{a} :=
    \begin{pmatrix}
(\sum_{j\in A} \transback_{0,ij})_{i\in C}\\
(\sum_{j\in A} \transback_{1,ij})_{i\in C} \\
\vdots \\
(\sum_{j\in A} \transback_{M-1,ij})_{i\in C}
\end{pmatrix}.
\end{equation*}
 
\begin{remark}
\label{rem:switching}
This approach can also be used for studying committors and transitions in systems with stochastic switching between different regimes. Consider the dynamics of a Markov chain that can be in $M$ different regimes each described by a transition matrix $P_m$, $m\in \{1,\dots,M\}$. The probability to switch between regimes is given by $\hat{P}\in\R^{M \times M}$. The committor probabilities would give the probability to next hit $B$ and not $A$, given the chain is in a certain state and regime and assuming the dynamics are equilibrated. Note that the periodic dynamics described in this paper are just a special case of stochastic switching, where the switching is deterministic from time $m$ to time $m+1$, i.e.,  $\hat{P}_{mm'} = \delta_{mm'-1}$ modulo $M$. The regime-augmented matrix can be written as: 
$$ \trans^{\text{switch}}_\text{Aug} =
\begin{pmatrix}
\hat{P}_{11} P_1 &  \dots   & \hat{P}_{1M} P_M \\
\vdots & \ddots &  \vdots\\
\hat{P}_{M1} P_1&  \dots  & \hat{P}_{MM} P_M
\end{pmatrix}
$$ 
which is still a row stochastic matrix, since   $\sum_{m'=1}^M \hat{P}_{mm'} =1 $ for all $m=1,\dots,M$. Exactly as above, $\trans^{\text{switch}}_\text{Aug}$ can be used for finding committor probabilities.
\end{remark}

\subsection{Proofs}\label{app:proofs}
\subsubsection{Lemma \ref{Lm:q_fb_all_paths}}
\begin{proof}
Let us consider the joint probability that the process starting in $i \in \St$ at time $n \in \Z$ reaches first $B$ before $A$ at time $\tau \in \Z_n^+ \cup \{ \infty \}$:
\begin{equation*}
J_i^+(n, \tau) := \prob(\tau_B^+(n) < \tau_A^+(n), \tau_{A\cup B}^+(n) = \tau | X_n = i) .
\end{equation*}
The law of total probability lets us write the forward committor in terms of a countable sum of the above mentioned joint probability:
\begin{equation}\label{eq:J_sum_tau}
q_i^+(n) = \sum\limits_{\tau \in \Z_n^+ \cup \{\infty\}} J_i^+(n, \tau) = \sum\limits_{\tau \in \Z_n^+} J_i^+(n, \tau),
\end{equation}
where we have used that $\prob(\tau_{A \cup B}^+(n) = \infty) = 0$ due to the ergodicity of the process, which ensures that the process will arrive at some time to the subset $A \cup B$. 

Next, by applying the same arguments of the proof of Theorem \ref{Thm:q_equ} we see that the joint probability $J_i^+(n, \tau)$ satisfies the following iterative system of equations for $\tau \in \Z_{n+1}^+$:
\begin{equation} \label{eq:q_f_joint}
\left\{ \begin{array}{rcll}
J_i^+(n, \tau) &=& \sum\limits_{j \in \St} \, \trans_{ij} \, J_j^+(n+1, \tau) & i \in C \\
J_i^+(n, \tau) &=& 0 & i \in A \cup B
\end{array}\right.
\end{equation}
with initial condition $J_i^+(n, n) = \1_B(i)$. 

Last, by using \eqref{eq:q_f_joint} recursively we can compute for any $i \in C$ and $\tau \in \Z_n^+$
\begin{equation}\label{eq:J_paths}
J_i^+(n, \tau)
= \sum\limits_{i_{n+1}, \dots, i_{\tau -1} \in C} \, \trans_{ii_{n+1}} \cdots \trans_{i_{\tau -2}i_{\tau -1}} \, J_i^+(\tau -1, \tau)
= \sum_{\substack{i_{n+1} \dots i_{\tau -1} \in C \\ i_\tau \in B}} \, \trans_{ii_{n +1}} \cdots \trans_{i_{\tau -1} i_{\tau}},
\end{equation}
where we recursively used $J_i^+(k, \tau) = 0$ for any $i \in A \cup B$ and $n+1\leq k<\tau$, and in the last iteration the fact that $J_i^+(\tau, \tau) = \1_B(i)$. 

Putting the results \eqref{eq:J_sum_tau} and \eqref{eq:J_paths} together completes the proof for the forward committor. The proof for the backward committor follows by using the same arguments.
\end{proof}

\subsubsection{Theorem \ref{Thm:conservationlaw}}
\begin{proof} 
First, for any $i\in C$, we have
\begin{equation} \label{eq:calc_conservation} \begin{split}
    \sum_{j\in\St} (\current_{ij}-\current_{ji}) &=  q_i^- \statdist_i \sum_{j\in\St} \trans_{ij} q^+_j -  q^+_i   \statdist_i \sum_{j\in\St} q_j^-\transback_{ij} =  q_i^- \statdist_i  q^+_i -  q^+_i   \statdist_i   q_i^- =0 
\end{split} \end{equation}
using the definition of the time-reversed transition probabilities and the committor equations \eqref{eq:q_f_def}, \eqref{eq:q_b_def} for $i\in C$.

Second, using that $\current_{ij}=0$ if $i\in B, j\in \St$ and also if $i\in \St, j\in A$, we can compute
 \begin{equation}  \begin{split}
     \sum_{\substack{i\in \St \\ j \in \St}} \current_{ij} = \sum_{\substack{i\in A \\ j \in \St}} \current_{ij}  +\sum_{\substack{i\in C \\ j \in \St}} \current_{ij}  +\sum_{\substack{i\in B\\ j \in \St}}  \underbrace{\current_{ij}}_{ = 0}  =  \sum_{\substack{i\in \St \\ j \in A}} \underbrace{\current_{ij}}_{ = 0} +  \sum_{\substack{i\in \St \\ j \in C}} \current_{ij}  + \sum_{\substack{i\in \St\\ j \in B}} \current_{ij}   \nonumber
\end{split} \end{equation}
and by the current conservation \eqref{eq:calc_conservation} for $i\in C$,
$\sum_{\substack{i\in C \\ j \in \St}} \current_{ij} = \sum_{\substack{i\in C \\ j \in \St}} \current_{ji}=\sum_{\substack{j\in C \\ i \in \St}} \current_{ij}$,
we arrive at
\begin{equation}  \begin{split}
      \sum_{\substack{i\in A \\ j \in \St}} \current_{ij}  +\sum_{\substack{i \in \St\\j\in C}} \current_{ij}   =  \sum_{\substack{i\in \St \\ j \in C}} \current_{ij}  +\sum_{\substack{i\in \St\\ j \in B}} \current_{ij}  \nonumber 
 \end{split} \end{equation}
 implying that $\sum_{\substack{i\in A \\ j \in \St}} \current_{ij}   =   \sum_{\substack{i\in \St\\ j \in B}} \current_{ij}.$
\end{proof}

\subsubsection{Theorem \ref{Thm:q_fb_periodic}}
   \begin{proof}
    For $i\in C$, the forward committor at time $n=m$ modulo $M$ reads
     \begin{equation} \label{eq:q_f_periodic_proof}
     \begin{split}
         q^+_{i}(n) &= \prob(\tau^+_B(n)<\tau^+_A(n)| X_n=i) \overset{(1)}{=} \prob(\tau^+_B(n+1)<\tau^+_A(n+1)| X_n=i) \\
         &\overset{(2)}{=} \sum_{j\in\St} \prob(\tau^+_B(n+1)<\tau^+_A(n+1), X_{n+1}=j| X_n=i)  \\
         &\overset{(3)}{=} \sum_{j\in\St} \prob(\tau^+_B(n+1)<\tau^+_A(n+1) |X_{n+1}=j, X_n=i) \prob( X_{n+1}=j| X_n=i)  \\
         &\overset{(4)}{=} \sum_{j\in\St} \prob(\tau^+_B(n+1)<\tau^+_A(n+1) |X_{n+1}=j) \trans_{n,ji} = \sum_{j\in\St} q^+_{j}(n+1) \trans_{n,ij} 
     \end{split} \end{equation}
     using (1) that $\tau_B^+(n), \tau_A^+(n)\geq n+1$ for $i\in C$, (2) the law of total probability, (3) the definition of conditional probabilities, (4) the strong Markov property. 
     
     And for $i\in A $ at time $n$ it follows from the definition that $\tau^+_A(n)=n$, whereas $\tau^+_B(n)>n$, thus $q^+_{i}(n)=0$, in a similar way for $i\in B$, $\tau^+_A(n)>n$,  $\tau^+_B(n)=n$ thus $q^+_{ i}(n)=1$.
     
     The proof for the backward committor equations follows the same lines.
    \end{proof}
    
\subsubsection{Lemma \ref{Lm:ex_uniq_periodic}}
\begin{proof}  
We start with the  case of the forward committor \eqref{eq:q_f_periodic}.\\
 We can rewrite \eqref{eq:q_f_stacked_m} with $m=0$, with   $\restr{\trans_0}{I\rightarrow J}$ denoting the restriction of the matrix $P_0$ to entries from $i\in I$ to $j\in J$, as the matrix equation
    \begin{equation}  \begin{split}  \label{eq:committor_stacked}
         (q^+_{0,i_0})_{i_0\in C}
          &=\underbrace{\restr{\trans_0}{C\rightarrow C} \cdots \restr{\trans_{M-1}}{C\rightarrow C} }_{=: \PC} (q^+_{0,i_M})_{i_M\in C} \nonumber  + \sum_{\tau=1}^{M}   \restr{\trans_0}{C \rightarrow C}  \cdots\restr{\trans_{\tau-1}}{C \rightarrow B} (1)_{B} \nonumber  
     \end{split} \end{equation}
    equivalently 
     \begin{equation}  \begin{split}  \label{eq:committor_stacked2}
         (I-D)(q^+_{0,i_0})_{i_0\in C}
          &=  \sum_{\tau=1}^{M}   \restr{\trans_0}{C \rightarrow C}  \cdots\restr{\trans_{\tau-1}}{C \rightarrow B} (1)_{B} \nonumber  
     \end{split} \end{equation}    
     We note, that the equation is uniquely solvable as long as $(I-\PC)$ is invertible,  $(I-\PC) $ is invertible if $\rho (\PC ) <1$. 
     
     By assuming that $\bar{P}_0$ is  irreducible, we will show that for all  $v \in \R^{ |C|}$, $ \lVert v\PC  \rVert_1 <  \lVert  v \rVert_1 $. Since this holds in particular for the eigenvectors, it follows that all eigenvalues $|\lambda|<1$ and thus $\rho (\PC ) <1$.
     
We know that $\PC$ is a substochastic matrix (row sum is $\leq 1$) since it is a product of substochastic matrices, and that all entries are non-negative, i.e., $   \sum_j |\PC_{ij}|  \leq 1$ for all $i\in C$.

Moreover, there exists at least one row with row sum less than 1 since by irreducibility of  $\bar{P}_0$, there must be at least one state $i$ in $C$ with a positive probability to go to $A$ or $B$. We call the first one of this kind by $i^*$ with  $   \sum_{j} |\PC_{i^*j}|    < 1$. 
Thus we can compute
        \begin{equation}  \begin{split}
         \lVert v \PC  \rVert_1  
         &= \lVert \sum_i v_i  \PC_{ij} \rVert_1  = \sum_j \sum_i |v_i| |\PC_{ij}|  =  \sum_i  |v_i| \left( \sum_j |\PC_{ij}| \right)  \nonumber \\
          &=
       \left(  \sum_{i\neq i^*}  |v_i| \sum_{j} |\PC_{ij}| \right) + |v_{i^*}| \sum_{j} |\PC_{i^*j}| \leq \sum_{i\neq i^*}  |v_i| +  |v_{i^*}| \sum_{j} |\PC_{i^*j}| < \sum_{i }  |v_i| = \lVert  v \rVert_1.
       \nonumber
        \end{split} \end{equation}
We have shown that a unique solution which we will call $q^+_0$ exists. The forward committors for $m=1,\dots, M-1$ can uniquely be computed therof by using \eqref{eq:q_f_periodic}.

For the case of the backward committor, we can proceed analogously 
by using the
 time-reversed transition probabilities mapping $M$ instances back $\bar{\transback_m}:= \transback_m \cdot \transback_{m-1} \cdots \transback_{m-M+1}$, and by noting that if $\bar{\trans}_0$ is irreducible, also  $\bar{\transback_0}$ is irreducible, which follows from the definition of irreducibility of $\bar{\trans}_0$ and using \eqref{eq:periodic_backward}.

    \end{proof}
    
\subsubsection{Theorem \ref{Thm:conservationlaw_periodic}}
\begin{proof} 
To show that the flux conservation in node $i\in C$ holds, we compute
    \begin{equation*}
         \sum_{j\in \St}  \left( f_{m,ij}^{AB}-f_{m-1,ji}^{AB}\right) 
         \overset{(1)}{=}  \pi_{m,i} q^-_{m,i} \sum_{j\in\St } \trans_{m,ij} q^+_{m+1,j} - \pi_{m,i} q^+_{m,i} \sum_{j\in\St }  q^-_{m-1,j} \trans^-_{m,ij}  \overset{(2)}{=} 0   
    \end{equation*}
    using (1) $\trans^-_{m,ij}\pi_{m,i} = \trans_{m-1,ji}\pi_{m-1,j}$ and (2)  the backward and forward committor equations for $i\in C$.
    
    Next we want to show that the current of reactive trajectories leaving $A$ during one period equals the current entering $B$ during one period. We calculate
 \begin{equation}  \begin{split}
    \sum_{m\in\M} \left( \sum_{\substack{i\in \St \\ j \in \St}} \current_{m,ij} \right) =\sum_{m\in\M}  \left( \sum_{\substack{i\in A \\ j \in \St}} \current_{m,ij}  +\sum_{\substack{i\in C \\ j \in \St}} \current_{m,ij}  +\sum_{\substack{i\in B\\ j \in \St}}  \underbrace{\current_{m,ij}}_{ = 0}  \right) =  \sum_{m\in\M} \left( \sum_{\substack{i\in \St \\ j \in A}} \underbrace{\current_{m,ij}}_{ = 0} +  \sum_{\substack{i\in \St \\ j \in C}} \current_{m,ij}  + \sum_{\substack{i\in \St\\ j \in B}} \current_{m,ij} \right)\nonumber 
\end{split} \end{equation}
using that $\current_{m,ij}=0$ if $i\in B, j\in \St$ and if $i\in \St, j\in A$. 
And by the current conservation for $i\in C$, $m\in\M$ and by relabeling $i,j,m$,
$$\sum_{m\in\M} \sum_{\substack{i\in C \\ j \in \St}} \current_{m,ij} = \sum_{m\in\M} \sum_{\substack{i\in C \\ j \in \St}} \current_{m-1, ji}=\sum_{m\in\M}  \sum_{\substack{j\in C \\ i \in \St}} \current_{m,ij}$$
we arrive at
\begin{equation}  \begin{split}
     \sum_{m\in\M}  \left( \sum_{\substack{i\in A \\ j \in \St}} \current_{m,ij}  +\sum_{\substack{i \in \St\\j\in C}} \current_{m,ij} \right)  = \sum_{m\in\M} \left( \sum_{\substack{i\in \St \\ j \in C}} \current_{m,ij}  +\sum_{\substack{i\in \St\\ j \in B}} \current_{m,ij}  \right) \nonumber 
 \end{split} \end{equation}
 implying that $ \sum_{m\in\M}   \sum_{\substack{i\in A \\ j \in \St}} \current_{m,ij}   =   \sum_{m\in\M}   \sum_{\substack{i\in \St\\ j \in B}} \current_{m,ij}.$
    
\end{proof}
\subsubsection{Theorem \ref{Thm:q_fb_finite}}
\begin{proof}
First, we recall what we have already seen in the proof of Theorem \ref{Thm:q_fb_periodic}. If $i \in A$ for any $n \in \{0, \dots, N-1\}$ we have that $q_i^+(n) = 0$, $q_i^-(n) = 1$ and if $i \in B$ for any $n \in \{0, \dots, N-1\}$ we have that $q_i^+(n) = 1$, $q_i^-(n) = 0$. 

Second, we find a final condition for the forward committor on $i \in C$
\[ q_i^+(N-1) = \prob(\tau_B^+(N) < \tau_A^+(N) | X_N = i) = \prob(X_N \in B | X_N = i) \overset{(1)}{=} 0 \] 
and an initial condition for the backward committor on $i \in C$
\[ q_i^-(0) = \prob(\tau_A^-(0) > \tau_B^-(0) | X_0 = i) = \prob(X_0 \in A | X_0 = i) \overset{(2)}{=} 0 , \]
where we have used in (1) and (2) that $i \in C$.

Third, by following the same arguments used to prove \eqref{eq:q_f_periodic_proof} (in the proof of Theorem \ref{Thm:q_fb_periodic}) we get for any $n \in \{0, \dots, N-2\}$ that
\[ q_i^+(n) = \sum\limits_{j \in \St} \, \trans_{ij}(n) \, q_j^+(n+1) . \]
Analogously, for any $n \in \{1, \dots, N-1\}$ we get that
\[ q_i^-(n) = \sum\limits_{j \in \St} \, \transback_{ij}(n) \, q_j^-(n-1). \]
\end{proof}

\subsubsection{Theorem \ref{Thm:conservationlaw_finite}}
\begin{proof} 
First, for any $i\in C$ and $n \in \{1, \dots, N-2\}$ we have on one hand that
\begin{equation} \label{eq:calc_conservation_finite_1}
\begin{split}
\sum_{j\in\St} \current_{ij}(n)
&=  q_i^-(n)  \dist{i}{n}  \Big( \sum_{j\in\St} \trans_{ij}(n) q^+_j(n+1) \Big) \\
&\overset{(1)}{=}   q_i^-(n) \dist{i}{n}  q^+_i(n)
\end{split} 
\end{equation}
and on the other hand that
\begin{equation} \label{eq:calc_conservation_finite_2}
\begin{split}
\sum_{j\in\St} \current_{ji}(n-1) 
&= \sum_{j\in\St} q_j^-(n-1) \dist{j}{n-1}  \trans_{ji}(n-1) q^+_i(n) \\
&\overset{(2)}{=}  q^+_i(n)  \dist{i}{n}  \Big( \sum_{j\in\St} \transback_{ij}(n) q_j^-(n-1) \Big) \\
&\overset{(3)}{=}  q^+_i(n)  \dist{i}{n}  q_i^-(n) ,
\end{split} 
\end{equation}
where (1) and (3) follow by \eqref{eq:q_f_finite} and \eqref{eq:q_b_finite} and (2) follows by \eqref{eq:transback_finite}. 

Second, by using that $\current_{ij}(n)=0$ for any $n \in \{0, \dots, N-2\}$ if $i\in B, j\in \St$ and if $i\in \St, j\in A$ we arrive at the following equality
\begin{equation}
\begin{split}
\sum\limits_{n=0}^{N-2} \left( \sum_{\substack{i\in \St \\ j \in \St}} \current_{ij}(n) \right) 
= \sum\limits_{n=0}^{N-2} \left( \sum_{\substack{i\in A \\ j \in \St}} \current_{ij}(n) +\sum_{\substack{i\in C \\ j \in \St}} \current_{ij}(n)  \right) 
= \sum\limits_{n=0}^{N-2} \left( \sum_{\substack{i\in \St \\ j \in C}} \current_{ij}(n) + \sum_{\substack{i\in \St\\ j \in B}} \current_{ij}(n) \right) . \nonumber 
\end{split}
\end{equation}
Then, we show that
\begin{equation}
\begin{split}
\sum\limits_{n=0}^{N-2} \sum_{\substack{i\in C \\ j \in \St}} \current_{ij}(n)
&= \sum_{\substack{i\in C \\ j \in \St}} \current_{ij}(0) + \sum\limits_{n=1}^{N-2} \sum_{\substack{i\in C \\ j \in \St}} \current_{ij}(n) \overset{(4)}{=} \sum\limits_{n=1}^{N-2} \sum_{\substack{i\in C \\ j \in \St}} \current_{ji}(n-1) \\
&\overset{(5)}{=} \sum\limits_{n=0}^{N-3} \sum_{\substack{j\in C \\ i \in \St}} \current_{ij}(n) + \sum_{\substack{j\in C \\ i \in \St}} \current_{ij}(N-2) = \sum\limits_{n=0}^{N-2} \sum_{\substack{j\in C \\ i \in \St}} \current_{ij}(n) ,
\end{split}
\end{equation}
where in (4) we have applied the time-dependent current conservation for $i \in C$, ${n \in \{1, \dots, N-2\}}$ and we have used that ${f^{AB}_{ij}(0)=0}$, and in (5) we have relabeled $i, j$ and used that ${f^{AB}_{ij}(N-2)=0}$. As a consequence 
\[ \sum\limits_{n=0}^{N-2}  \sum_{\substack{i\in A \\ j \in \St}} \current_{ij}(n) = \sum\limits_{n=0}^{N-2} \sum_{\substack{i\in \St\\ j \in B}} \current_{ij}(n) . \]
\end{proof}
\end{document}